\DeclareMathAlphabet\mathbfcal{OMS}{cmsy}{b}{n}
\newcommand{\RR}{\mathbb R}
\newcommand{\xCC}[1]{\ifmmode \mathcal{#1} \else $\mathcal{#1}$ \fi}
\newcommand{\cH}{\xCC H}
\DeclareMathOperator{\dive}{div}
\DeclareMathOperator{\lap}{\Delta}
\def\ds{\displaystyle}
\def\rent{{\bf ]}\hskip -1.5pt {\bf ]}}
\def\lent{{\bf [}\hskip -1.5pt {\bf [}}
\def\bvarphi{\boldsymbol{\varphi}}
\def\p{\partial}
\def\R{\mathbb{R}}
\def\O{\Omega}
\def\d{{\rm d}}
\def\grad{\nabla}
\def\div{\dive}
\def\Aa{\mathcal{A}}
\def\ov#1{\overline{#1}}
\def\wh#1{\widehat{#1}}
\def\Tt{\mathcal{T}}
\def\bHh{\mathbfcal{H}}
\def\Ee{\mathcal{E}}
\def\sig{\sigma}
\def\dt{{\Delta t}}
\def\bc{{\boldsymbol c}}
\def\bF{{\boldsymbol f}}
\def\bg{{\boldsymbol g}}
\def\bu{{\boldsymbol u}}
\def\bU{{\boldsymbol U}}
\def\bv{{\boldsymbol v}}
\def\bdt{{\boldsymbol{\Delta t}}}
\def\be{\begin{equation}}
\def\ee{\end{equation}}
\def\rent{{\bf ]}\hskip -1.5pt {\bf ]}}
\def\lent{{\bf [}\hskip -1.5pt {\bf [}}
\theoremstyle{plain}
\newtheorem{prop}{Proposition}[section]
\theoremstyle{remark}
\newtheorem{remark}{Remark}[section]
\def\wt#1{{\widetilde{#1}}}
\newcommand{\logLogSlopeTriangle}[5]
{
	\pgfplotsextra
	{
		\pgfkeysgetvalue{/pgfplots/xmin}{\xmin}
		\pgfkeysgetvalue{/pgfplots/xmax}{\xmax}
		\pgfkeysgetvalue{/pgfplots/ymin}{\ymin}
		\pgfkeysgetvalue{/pgfplots/ymax}{\ymax}
		
		\pgfmathsetmacro{\xArel}{#1}
		\pgfmathsetmacro{\yArel}{#3}
		\pgfmathsetmacro{\xBrel}{#1-#2}
		\pgfmathsetmacro{\yBrel}{\yArel}
		\pgfmathsetmacro{\xCrel}{\xArel}
		
		\pgfmathsetmacro{\lnxB}{\xmin*(1-(#1-#2))+\xmax*(#1-#2)} 
		\pgfmathsetmacro{\lnxA}{\xmin*(1-#1)+\xmax*#1} 
		\pgfmathsetmacro{\lnyA}{\ymin*(1-#3)+\ymax*#3} 
		\pgfmathsetmacro{\lnyC}{\lnyA-#4*(\lnxA-\lnxB)}
		\pgfmathsetmacro{\yCrel}{\lnyC-\ymin)/(\ymax-\ymin)}
		
		\coordinate (A) at (rel axis cs:\xArel,\yArel);
		\coordinate (B) at (rel axis cs:\xBrel,\yBrel);
		\coordinate (C) at (rel axis cs:\xCrel,\yCrel);
		
		\draw[#5]   (A)-- node[pos=0.5,anchor=north] {\scriptsize{1}}
		(B)--
		(C)-- node[pos=0.,anchor=east] {\scriptsize{#4}} 
		(A);
	}
}
\begin{document}

\title{A convergent entropy diminishing finite volume scheme for a cross-diffusion system}
\author{
Cl\'ement Canc\`es\thanks{Inria, Université de Lille, CNRS, UMR 8524 - Laboratoire Paul Painlev\'e, F-59000 Lille (\href{mailto:clement.cances@inria.fr}{\tt clement.cances@inria.fr})}
\and
Benoît Gaudeul\thanks{Université de Lille,  CNRS, UMR 8524, Inria - Laboratoire Paul Painlev\'e, F-59000 Lille (\href{mailto:benoit.gaudeul@univ-lille.fr}{\tt benoit.gaudeul@univ-lille.fr})}
}
\maketitle
\begin{abstract}
We study a two-point flux approximation finite volume scheme 
for a cross-diffusion system. The scheme is shown to preserve the key properties of 
the continuous systems, among which the decay of the entropy. 
The convergence of the scheme is established thanks 
to compactness properties based on the discrete entropy - entropy dissipation estimate. 
Numerical results illustrate the behavior of our scheme.
\end{abstract}

\begin{keywords} Cross-diffusion system, Finite Volumes, discrete entropy method, convergence \end{keywords}
\begin{AMS} 35K51, 65M08, 65M12 \end{AMS}

\section{Introduction}
\subsection{The system under study}

The system studied in this paper has been originally introduced by \cite{bakhtaCrossdiffusionSystemsNonzero2018b} to model the production of solar panels using vapor deposition. 
In this system, we study the diffusion of $N$ species whose respective concentrations are $U = (u_1,\dots,u_N)$ in a (nonempty) connected bounded open domain $\Omega$ of $\mathbb R^d$ for a fixed time $T$. We denote by $Q_T = (0,T) \times \Omega$. The diffusion occurs through exchanges between different species which are quantified by the matrix $A=(a_{i,j})$ of cross-diffusion coefficients.
It leads to the following system of partial differential equations:
\begin{equation}\label{eq:crossdiff}
\partial_t u_i -\dive\left(\sum_{j=1}^N a_{i,j}\left(u_j\nabla u_i-u_i\nabla u_j\right)\right)=0 \quad \text{in}\; Q_T \; \text{for $i\in\lent 1, N \rent$}.
\end{equation}
The matrix $A$ is assumed to be symmetric with nonnegative coefficients, i.e. $a_{i,j} =a_{j,i}\geq 0$. 
$A$ does not depend on $U$ and thus differs from the diffusion matrix $D(U)=(d_{i,j}(U))$ defined by
\[d_{i,j}(U)=\delta_{i,j}\sum_{k\neq i}a_{i,k}u_k -a_{i,j}u_i,\]
where $\delta_{i,j}$ stands for Kronecker symbol, such that the problem \eqref{eq:crossdiff} rewrites 
\be\label{eq:crossdiff2}
\partial_t U-\dive\left(D(U)\nabla U\right)=0.\ee
System~\eqref{eq:crossdiff2} enters the family of the nonlinear cross-diffusion systems since $D$ depends on $U$ 
and has nonzero off-diagonal entries. 
Challenges both from the analytical and numerical points of view come
from the presence of off-diagonal zeros in $A$. In the previous contributions 
\cite{burgerNonlinearPoissonNernst2012, GJ18, Anita2019}, 
the zeros are integrated through the assumption that the cross-diffusion occurs with 
and only with a solvent specie. Until Section \ref{sec:convergence} we will not make 
any assumption about the zeros of $A$. A non-degeneracy assumption will be further 
assumed {in} Section~\ref{sec:convergence}, but our convergence result could extend 
to the particular cross-diffusion matrices considered in~\cite{GJ18, Anita2019, GJ_arxiv}.

We supplement system \eqref{eq:crossdiff} with no-flux boundary conditions 
\be\label{eq:no-flux}
\sum_{j=1}^N a_{i,j}\left(u_j\nabla u_i-u_i\nabla u_j\right)\cdot n=0\quad \text{on}\; (0,T) \times \partial\Omega, \quad i \in \lent 1, N \rent.
\ee 
The initial concentration $U^0=(u_1^0, \dots, u_N^0)$ is supposed to be measurable and to map $\Omega$ into 
\[
\Aa=\left\lbrace U=(u_1,\dots,u_N)\in\RR_+^N\;\middle \vert\; \sum_{i=1}^N u_i=1\right\rbrace,
\] 
so we write in the condensed form $U^0\in L^\infty(\Omega;\Aa)$, {which means that $U^0$ is measurable 
and takes its values in the bounded subset $\Aa$ of $\R^{N}$}. Finally, we assume that all the chemical 
species under consideration are present:
\be\label{eq:mass_pos}
\int_\Omega u_i^0 \d x >0, \quad \forall i \in \lent1,N\rent. 
\ee

\subsection{Formal structure}
This system has several structural properties, the goal of this subsection is to exhibit them.
The calculations presented in this section are formal: we assume that the solutions to~\eqref{eq:crossdiff} 
enjoy enough regularity to justify the calculations below. 
Rigorous proofs at the continuous level for the system under consideration here  can be found in \cite{bakhtaCrossdiffusionSystemsNonzero2018b,berendsenStrongSolutionsWeakstrong2019} (see also \cite{GJ18}). 
The properties listed here can also be obtained by passing to the limit in the numerical scheme. 
The first property we point out is the conservation of mass for all the species involved in System~\eqref{eq:crossdiff}.
\begin{lemma}[conservation of mass]\label{lem:masscons}
 \eqref{eq:crossdiff} and~\eqref{eq:no-flux} corresponding to an initial data $U^0 \in L^\infty(\O;\Aa)$, then
\[
\int_\Omega u_i(t,x)\d x=\int_\Omega u_i^0(x) \d x, \quad \forall t \in [0,T], \; \forall {i} \in \lent 1, N\rent.
\]
\end{lemma}
\begin{proof}
Let $U$ be a solution of \eqref{eq:crossdiff}, $t\in [0,T], i \in \lent 1, N\rent$, and let $\varphi(x,s)=1_{[0,t]}(s)$. 
With this particular choice of $\varphi$, we have for all $s$ that
\begin{multline*}
\int_{\Omega}\dive\left(\sum_{j=1}^N a_{i,j}\left(u_j\nabla u_i-u_i\nabla u_j\right)\right) \varphi(x,s)\d x\\
=-\int_{\Omega}\sum_{j=1}^N a_{i,j}\left(u_j\nabla u_i-u_i\nabla u_j\right) \nabla\varphi(x,s)\d x=0.
\end{multline*}
Hence, using $\varphi$ as a test function in \eqref{eq:crossdiff}, we have:
\[
\int_0^t \dfrac{d}{ds}\left(\int_\Omega u_i( x,s)\d x \right)\d s=0.
\]
The fundamental theorem of calculus yields the desired lemma.
\end{proof}

The symmetry of the matrix $A=(a_{i,j})$ yields:
\[
\sum_{i=1}^N\sum_{j=1}^N a_{i,j}\left(u_j\nabla u_i-u_i\nabla u_j\right)=0.
\]
Therefore, a solution $U$ to \eqref{eq:crossdiff} satisfies
$
\partial_t \sum_{i=1}^N u_i =0.
$
Admit that $u_i(t,x) \geq 0$ for all $t>0$ (this will be proved in the discrete setting and is proved in 
\cite[Proposition 2.2]{berendsenStrongSolutionsWeakstrong2019} in the continuous setting), then 
the admissibility condition encoded in $\Aa$ is preserved along time. 
\begin{lemma}\label{lem:admsum}
Let $U$ be a solution to \eqref{eq:crossdiff} and~\eqref{eq:no-flux} corresponding to an initial data $U^0 \in L^\infty(\O;\Aa)$, 
then $U(t,x) \in \Aa$ for all $(t,x) \in \Aa$, i.e., $U \in L^\infty(Q_T;\Aa)$.
\end{lemma}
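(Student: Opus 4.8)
The plan is to verify directly the two conditions that define membership in $\Aa$: nonnegativity of each component, $u_i(t,x)\geq 0$, and the normalization $\sum_{i=1}^N u_i(t,x)=1$. The nonnegativity is taken for granted here, exactly as announced in the text preceding the statement: it will be proved at the discrete level later in the paper, and at the continuous level it is established in \cite[Proposition 2.2]{berendsenStrongSolutionsWeakstrong2019}. It therefore remains only to derive the normalization constraint, which is the content I would actually write out.

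For the normalization I would sum equation \eqref{eq:crossdiff} over $i\in\lent 1,N\rent$ and invoke the identity obtained just above the statement from the symmetry $a_{i,j}=a_{j,i}$, namely that the \emph{total} flux $\sum_{i}\sum_{j} a_{i,j}(u_j\nabla u_i-u_i\nabla u_j)$ vanishes identically. This kills the divergence term and leaves
\[
\partial_t \sum_{i=1}^N u_i = \dive\left(\sum_{i=1}^N\sum_{j=1}^N a_{i,j}\left(u_j\nabla u_i-u_i\nabla u_j\right)\right)=0,
\]
an identity that holds pointwise in $\Omega$ in the present formal setting. Integrating in time from $0$ to $t$ at a fixed $x$ and using that $U^0(x)\in\Aa$, hence $\sum_{i=1}^N u_i^0(x)=1$, yields $\sum_{i=1}^N u_i(t,x)=1$ for every $t\in[0,T]$.

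Combining the admitted nonnegativity with the normalization just obtained shows $U(t,x)\in\Aa$ for all $(t,x)\in Q_T$, and since $\Aa$ is a bounded subset of $\R^N$ this is precisely the assertion $U\in L^\infty(Q_T;\Aa)$. The only genuinely delicate ingredient is the nonnegativity, which is exactly the step deferred to the discrete analysis and to the cited continuous theory; the normalization part, by contrast, is immediate once the flux cancellation is in hand. I would stress one structural point that makes this lemma stronger than Lemma~\ref{lem:masscons}: there only the vanishing of the divergence (after integration against $\varphi=1_{[0,t]}$) was used, giving control of the spatial integrals $\int_\Omega u_i\,\d x$; here the cancellation of the total flux itself, and not merely of its divergence, is what provides control \emph{pointwise} in $x$ and thus delivers $\sum_i u_i\equiv 1$ rather than only a conserved integral.
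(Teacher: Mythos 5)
Your proposal matches the paper's own argument: the paper likewise derives $\partial_t\sum_i u_i=0$ from the pointwise cancellation of the total flux due to the symmetry of $A$, and admits the nonnegativity of the $u_i$ by deferring to the discrete analysis and to \cite[Proposition 2.2]{berendsenStrongSolutionsWeakstrong2019}. Your closing observation contrasting the pointwise flux cancellation here with the merely integrated divergence used for Lemma~\ref{lem:masscons} is accurate, but the proof route is essentially identical to the paper's.
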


{The system can be derived by passing to the macroscopic limit from a random jump process in the 
spirit of~\cite{GL97,BDFPS10}. As expected because of this derivation from statistical physic considerations, the system 
fulfills Onsager's reciprocal relation~\cite{Ons31-I, Ons31-II} and has a formal gradient flow structure. The driving functional is
the mixing entropy
\be\label{eq:ent.mix}
E: \begin{cases}
L^\infty(\O;\Aa) \to \R, \\
U \mapsto \int_\Omega \sum_{i=1}^N u_i\ln(u_i) \d x.
\end{cases}
\ee}
The next property we want to highlight at the continuous level is the decay of this entropy. 
Using the chain rule $\nabla c=c\nabla \ln(c)$, the system \eqref{eq:crossdiff} {is formally equivalent to}
\begin{equation}\label{eq:entropyform}
\partial_t u_i -\dive\left(\sum_{j=1}^N a_{i,j}u_i u_j \left(\nabla \ln(u_i)-\nabla \ln(u_j)\right)\right)=0, 
\quad i \in \lent 1 , N \rent.
\end{equation}
\begin{proposition} \label{prop:entropy}
$E$ is a Lyapunov functional for the 
system~\eqref{eq:no-flux}--\eqref{eq:entropyform}. 
More precisely, the following entropy - entropy dissipation estimate holds:
\begin{equation}\label{eq:EDE}
\dfrac{d}{dt}E(U) +\int_\Omega \left(\sum_{1\leq i<j\leq N}a_{i,j}u_i u_j\left|\nabla \ln(u_i)-\nabla \ln u_j\right|^2\right)\d x=0.
\end{equation}
\end{proposition}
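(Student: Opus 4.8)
The plan is to differentiate the entropy $E$ along a solution, inject the equation in its entropy form~\eqref{eq:entropyform}, integrate by parts in space, and then exploit the symmetry of $A$ to recognize the dissipation term as a sum of squares. Throughout, I work formally under the assumed regularity and strict positivity of the $u_i$, so that $\ln(u_i)$ and $\nabla\ln(u_i)$ are well defined.

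First I would compute the time derivative of $E$. Since $\frac{d}{du}(u\ln u)=\ln(u)+1$, the chain rule gives $\frac{d}{dt}E(U)=\int_\Omega\sum_{i=1}^N\partial_t u_i\,(\ln(u_i)+1)\,\d x$. The constant contribution drops out: recalling that the symmetry of $A$ forces $\partial_t\sum_{i=1}^N u_i=0$ (as observed above Lemma~\ref{lem:admsum}), the term $\int_\Omega\sum_i\partial_t u_i\,\d x$ vanishes, leaving only the logarithmic part $\frac{d}{dt}E(U)=\int_\Omega\sum_{i=1}^N\partial_t u_i\,\ln(u_i)\,\d x$.

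Next I would substitute the flux form of the equation. Writing $\partial_t u_i=\dive\big(\sum_{j=1}^N a_{i,j}u_iu_j(\nabla\ln(u_i)-\nabla\ln(u_j))\big)$ and integrating by parts in space, the boundary terms vanish thanks to the no-flux condition~\eqref{eq:no-flux}: using $\nabla u=u\nabla\ln(u)$, one checks that $u_j\nabla u_i-u_i\nabla u_j=u_iu_j(\nabla\ln(u_i)-\nabla\ln(u_j))$, so~\eqref{eq:no-flux} is precisely the statement that the entropy-form flux has zero normal trace. This yields $\frac{d}{dt}E(U)=-\int_\Omega\sum_{i,j=1}^N a_{i,j}u_iu_j\,\nabla\ln(u_i)\cdot(\nabla\ln(u_i)-\nabla\ln(u_j))\,\d x$.

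The final and only delicate step is the symmetrization over the species indices. Because the weight $a_{i,j}u_iu_j$ is symmetric under $i\leftrightarrow j$, I would average the double sum with its relabeled copy; setting $g_i=\nabla\ln(u_i)$ and using the elementary identity $g_i\cdot(g_i-g_j)+g_j\cdot(g_j-g_i)=|g_i-g_j|^2$, the integrand collapses to $\tfrac12\sum_{i,j}a_{i,j}u_iu_j|\nabla\ln(u_i)-\nabla\ln(u_j)|^2$. The diagonal terms vanish and the factor $\tfrac12$ converts the full sum into the sum over $i<j$, giving exactly~\eqref{eq:EDE}. I expect this index symmetrization — the algebraic heart of the argument, and the discrete-in-species analogue of an integration by parts — to be the key step, everything else being routine formal manipulation.
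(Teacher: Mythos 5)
Your proof is correct and follows essentially the same route as the paper: differentiate $E$ (disposing of the additive constant via conservation, exactly as the paper does with $u_i(\ln u_i - 1)$), multiply the entropy form~\eqref{eq:entropyform} by $\ln(u_i)$, integrate by parts using the no-flux condition, and symmetrize over species using $A=A^T$. The symmetrization identity you spell out with $g_i\cdot(g_i-g_j)+g_j\cdot(g_j-g_i)=|g_i-g_j|^2$ is precisely the step the paper compresses into ``summing over $i$ yields the result thanks to the symmetry of $A$.''
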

\begin{proof} 
First, we notice that thanks to the conservation of mass:
\[
	\dfrac{d}{dt}E(U)=\dfrac{d}{dt}\int_\Omega \sum_{i=1}^N u_i(\ln(u_i)-1)=\int_\Omega \sum_{i=1}^N\ln(u_i)\p_t u_i.
\]
Then multiply Equation \eqref{eq:entropyform} by $\ln(u_i)$ and integrate by part in order to get:
\begin{equation*}
\int_\Omega\ln(u_i) \p_t u_i +\int_\Omega \left(\sum_{j=1}^N a_{i,j}u_i u_j\nabla \ln(u_i) \cdot \left(\nabla \ln(u_i)-\nabla \ln(u_j)\right)\right)=0.
\end{equation*}
Summing over $i\in\lent 1, N \rent$  yields the announced result thanks to the symmetry of $A$.
\end{proof}

The entropy - entropy dissipation relation~\eqref{eq:EDE} is key in the analysis of many 
cross-diffusion systems, as exposed in~\cite{Jungel15, Juengel16}. It will also 
play a central role in this paper. 
Assume that 
\be\label{eq:nondegenerate}
\min_{i\neq j} a_{i,j}>0 \ee
as it will be done in Section~\ref{sec:convergence}.
As a consequence of the inequality 
\begin{align*}
 \sum_{i=1}^N \int_\O |\nabla u_i|^2 \leq& \; 4   \sum_{i=1}^N \int_\O |\nabla \sqrt{u_i}|^2 \\
 \leq & \; 
 \frac{1}{\min_{i\neq j} a_{i,j}} \int_\Omega \sum_{1\leq i<j\leq N}a_{i,j}u_i u_j\left|\nabla \ln(u_i)-\nabla \ln(u_j)\right|^2,
\end{align*}
we deduce from~\eqref{eq:EDE} a $L^2(0,T;H^1(\O))^N$ estimate on $U$. This motivates the 
following notion of weak solution.

\begin{definition}\label{def:weaksol}
A \emph{weak solution $U$ to \eqref{eq:crossdiff} and~\eqref{eq:no-flux}} corresponding to the initial profile $U^0 \in L^\infty(\O;\Aa)$
 is a function of $L^\infty\left(Q_T; \Aa\right)\cap L^2\left([0,T];H^1(\O)\right)^N$  satisfying, $\forall i\in \lent 1, N\rent$, $\forall \varphi \in C^\infty_c([0,T)\times\overline{\Omega})$:
\begin{equation}\label{eq:wf}
{-}\iint_{Q_T} u_i \p_t \varphi \d x\d t - \int_\O u_i^0 \varphi(0,\cdot)\d x+ \iint_{Q_T}  \sum_{j=1}^N a_{i,j} \left(u_j \grad u_i-u_i \grad u_j\right) \grad\varphi=0.
\end{equation}
\end{definition}

The regularity requirement on a weak solution $U$ is natural in the setting where Assumption~\eqref{eq:nondegenerate} holds. 
In this case, the solution even enjoys a stronger regularity, as established in the recent contribution~\cite{berendsenStrongSolutionsWeakstrong2019}. In the case where~\eqref{eq:nondegenerate} 
is not fulfilled (but under a structural assumption on the matrix $A$), a more involved notion of weak solution 
has to be introduced, cf.~\cite{GJ18}.

There is an important property that relates the model~\eqref{eq:crossdiff} to classical Fickian diffusion. 
As a consequence of Lemma \ref{lem:admsum}, one can rewrite 
\begin{equation}\label{eq:changeofastar}
\dive\left(\sum_{j=1}^N \left(u_j\nabla u_i-u_i\nabla u_j\right)\right)=\lap u_i, \qquad i \in \lent 1, N \rent.
\end{equation}
As a consequence, if all the $a_{i,j}$ are equal to some $a \in \R$, then the system~\eqref{eq:crossdiff} reduces 
to $N$ uncoupled heat equations $\p_t u_i = a \Delta u_i$. 
Based on the identity~\eqref{eq:changeofastar}, we can rewrite the system~\eqref{eq:crossdiff} under the form
\begin{equation}\label{eq:crossdiffand heat}
\partial_t u_i -a^\star\lap u_i-\dive\left(\sum_{j=1}^N (a_{i,j}-a^\star)\left(u_j\nabla u_i-u_i\nabla u_j\right)\right)=0, \quad i \in \lent 1,N \rent,
\end{equation}
where $a^\star \in \R$ is arbitrary for the moment. The formulation~\eqref{eq:crossdiffand heat} is at the basis of our 
discretization.

\subsection{Objectives} \label{ssec:obj}
The goal of this paper is to build and analyze a numerical scheme preserving the properties discussed in the previous section, namely:
\begin{itemize}
\item the non-negativity of the concentrations;
\item the conservation of mass (Lemma~\ref{lem:masscons});
\item the preservation of the volume filling constraint (Lemma~\ref{lem:admsum});
\item the entropy-entropy dissipation relation (Proposition~\ref{prop:entropy}).
\end{itemize}

The construction of our scheme is the purpose of Section \ref{sec:FV}. In Section~\ref{sec:existence}, 
we will show the existence of solutions to this scheme and the preservation of discrete counterparts 
to the previously listed physical properties. Section~\ref{sec:convergence} is devoted to the convergence 
of the numerical scheme toward weak solutions provided Assumption~\eqref{eq:nondegenerate} is satisfied. 
Finally{,} in Section~\ref{sec:num}, we show the outcomes of some numerical experiments.

Before entering the core of the paper, let us mention that the development of 
numerical analysis for cross-diffusion systems is quite recent. To our 
knowledge, the first convergence study of a finite volume approximation for a non-degenerate 
cross-diffusion problem was carried out in~\cite{ABRB11}. This {contribution} is based on classical 
quadratic energy estimate{, similarly to what is proposed in~\cite{ZWYW18}}.
The implementation of the discrete entropy method~\cite{CH14_FVCA7} 
for cross-diffusion systems is more recent. Let us cite \cite{Ahmed_intrusion, Bourriquets} 
where upstream mobility finite volume and control volume finite {element} schemes for a 
multiphase extension of the porous medium equation are studied. 
Upwinding is also used in~\cite{Anita2019} to approximate the solution of a system which is very close to 
the problem~\eqref{eq:crossdiff} under study, or in~\cite{CFS_arXiv} for a problem in which nonlocal interactions 
are also considered. 
As a consequence of the upwind choice for the mobility, the schemes presented 
in~\cite{Ahmed_intrusion, Bourriquets, Anita2019} and \cite{CFS_arXiv} are first{-}order accurate in space.
{A} natural solution to pass to order two is to rather consider mobilities given by 
arithmetic means~\cite{DJZ_arXiv}. The motivation of the finite element scheme 
proposed in~\cite{JL19} is also the same. However, the scheme proposed in~\cite{JL19} 
is expressed in entropy (or dual) variables (in our context $\log(u_i)$) leading to computational difficulties 
when the concentrations are close to $0$. 
{Other entropy stable numerical schemes have been proposed for cross-diffusion systems, as for instance
discontinuous Galerkin schemes in~\cite{SCS19}, or finite volumes on staggered cartesian grids for 
Maxwell-Stefan cross-diffusion in~\cite{HLTW_arXiv}. 
Finally, let us point out that the design of entropy (or energy) stable numerical schemes for dissipative systems 
with formal gradient flow structure in a Riemannian geometry have been the purpose of intense research 
in the recent years, as shows the extensive (but not exhaustive) 
recent literature \cite{CG_VAGNL, KSW18, CNV_HAL, ABPP19, MW19, CCFG_HAL, CN_HAL, GS20, BCMS_arXiv, CMW_arXiv} on this topic. 
Let us also refer to~\cite{CWWW19, DWZZ19, SXY19,  BKNR_arXiv, SX_preprint} for the simpler situation of gradient flows 
in Hilbert spaces.}

\begin{remark}\label{rmk:reac}
{Our study can be extended to the case where reaction terms are incorporated in the system. 
More precisely, one can consider a system of the form 
\be\label{eq:sys.reac}
\p_t U - \div(D(U) \grad U) = R(U), 
\ee
where $D(U)$ is as in~\eqref{eq:crossdiff2}, and where the reaction term function 
\[
R: \begin{cases}
\R^N \to \R^N\\
U \mapsto R(U) =  \left(r_i(U)\right)_{1\leq i \leq N}
\end{cases}
\]
is continuous and satisfies the following structural properties which are classically satisfied for 
reactive systems:
\begin{enumerate}[(i)]
\item\label{it:reac.iso} {\em Isochore process:} $\sum_{i=1}^N {r}_i(U) = 0$ for all $U \in \R^N$; 
\item\label{it:reac.pos} {\em Positivity preservation:} $r_i(U) \geq 0$ for $U \in \R^N$ with  $u_i \leq 0$; 
\item\label{it:reac.ent} {\em Entropy dissipation:} there exists $\ov U = \left(\ov u_i \right)_{1\leq i \leq N}$ in $\Aa$ with $\ov u_i >0$ 
for all $i\in\{1,\dot, N\}$ such that 
\be\label{eq:reac-dissip}
R(U) \cdot \ln(U/\ov U) = \sum_{i=1}^N r_i(U) \left(\ln(u_i) - \ln(\ov u_i)\right) \leq 0, \qquad \forall U \in \Aa.
\ee
\end{enumerate}
Because of reaction terms, the volume of each specie is no longer conserved, so that Lemma~\ref{lem:masscons} 
does no longer hold true. However, because of Assumption \eqref{it:reac.iso} above, the total volume is conserved, 
hence the condition $\sum_{i=1}^N u_i(t,x) =  1$ remains true for all time. Since Assumption~\eqref{it:reac.pos} guarantees 
the positivity of the solution, one gets that $U(t,x)$ belongs to $L^\infty(Q_T, \Aa)$ with the reaction term as well. 
Finally, Assumption~\eqref{it:reac.ent} on the reaction terms ensures that the relative entropy 
\be\label{eq:ent.rel}
E(U|\ov U)=\sum_{i=1}^N\int_\O u_i\log\left(\frac{u_i}{\ov u_i}\right) \geq0
\ee
is a Lyapunov functional for the system. This stability property allows to extend our purpose in presence 
of reaction.
Note that in absence of reaction $R \equiv 0$, 
this relative entropy (one can for instance set $\ov u_i = \oint_\O u_i^0$) coincides with the mixing 
entropy~\eqref{eq:ent.mix} up to an additive constant 
thanks to the conservation of the volume of each specie, cf. Lemma~\ref{lem:masscons}.}

{Finally, let us note that if $\ov U \in \Aa$ is such that $\ov u_i = 0$ for some $i \in \{1,\dots, N\}$, the relative entropy
$E(U|\ov U)$ is no longer well-defined. Our analysis can still be extended by showing that the mixing entropy $E(U)$ grows 
at most linearly with time, which is sufficient for establishing the convergence of the straightforward extension to the 
case $R\neq 0$ of the finite volume scheme to be presented in the next section.}
\end{remark}

\section{Finite Volume approximation}\label{sec:FV}
	This section is organized as follows. First, in Section~\ref{ssec:mesh}, 
we state the requirements on the mesh and fix some notations. Then in Section~\ref{ssec:scheme}, 
we describe the numerical scheme to be studied in this paper. It is based on Formulation \eqref{eq:crossdiffand heat} 
of the problem.
Then in Section~\ref{ssec:mainthms}, 
we state our two main results. The first one, namely Theorem~\ref{thm:main1}, focuses on 
the case of a fixed mesh. We are interested 
in the existence of a solution to the nonlinear system corresponding to the scheme, and 
the dissipation of the entropy at the discrete level. More precisely, one establishes 
that the studied scheme satisfies a discrete entropy - entropy dissipation inequality 
that {can} be thought of as a counterpart to Proposition~\ref{prop:entropy}.
Our second main result, namely Theorem~\ref{thm:main2}, is devoted to the convergence of the 
scheme towards a weak solution as the time step and the mesh size tend to $0$. 

\subsection{Discretization of $(0,T) \times \O$}\label{ssec:mesh}
The scheme we propose relies on two-point flux approximation (TPFA) finite volumes.
As explained 
in~\cite{Droniou-review,Tipi,gartnerWhyWeNeed2019},
 this approach appears to be very efficient as soon 
as the continuous problem to be solved numerically is isotropic and one has the 
freedom to choose a suitable mesh fulfilling the so-called orthogonality condition~\cite{Herbin95, EGH00}.
We recall here the definition of such a mesh, which is illustrated in Figure~\ref{fig:mesh}.

\begin{definition}
\label{def:mesh}
An \emph{admissible mesh of $\O$} is a triplet $\left(\Tt, \Ee, {(x_K)}_{K\in\Tt}\right)$ such that the following conditions are fulfilled. 
\begin{enumerate}[(i)]
\item Each control volume (or cell) $K\in\Tt$ is non-empty, open, polyhedral and convex. We assume that 
\[
K \cap L = \emptyset \quad \text{if}\; K, L \in \Tt \; \text{with}\; K \neq L, 
\qquad \text{while}\quad \bigcup_{K\in\Tt}\ov K = \ov \O. 
\]
\item Each face $\sig \in \Ee$ is closed and is contained in a hyperplane of $\R^d$, with positive 
$(d-1)$-dimensional Hausdorff (or Lebesgue) measure denoted by $m_\sig = \cH^{d-1}(\sig) >0$.
We assume that $\cH^{d-1}(\sig \cap \sig') = 0$ for $\sig, \sig' \in \Ee$ unless $\sig' = \sig$.
For all $K \in \Tt$, we assume that 
there exists a subset $\Ee_K$ of $\Ee$ such that $\p K =  \bigcup_{\sig \in \Ee_K} \sig$. 
Moreover, we suppose that $\bigcup_{K\in\Tt} \Ee_K = \Ee$.
Given two distinct control volumes $K,L\in\Tt$, the intersection $\ov K \cap \ov L$ either reduces to a single face
$\sig  \in \Ee$ denoted by $K|L$, or its $(d-1)$-dimensional Hausdorff measure is $0$. 
\item The cell-centers $(x_K)_{K\in\Tt}$ satisfy $x_K \in K$, and are such that, if $K, L \in \Tt$ 
share a face $K|L$, then the vector $x_L-x_K$ is orthogonal to $K|L$.
\item For the boundary faces $\sig \subset \p\O$, we assume that either $\sig \subset \Gamma_D$ or $\sig \subset \ov \Gamma_N$.
For $\sig \subset \p\O$ with $\sig \in \Ee_K$ for some $K\in \Tt$, we assume additionally that 
there exists $x_\sig \in \sig$ such that $x_\sig - x_K$ is orthogonal to $\sig$.

\end{enumerate} 
\end{definition}

\begin{figure}[htb]
\centering
\resizebox{7cm}{!}{\input{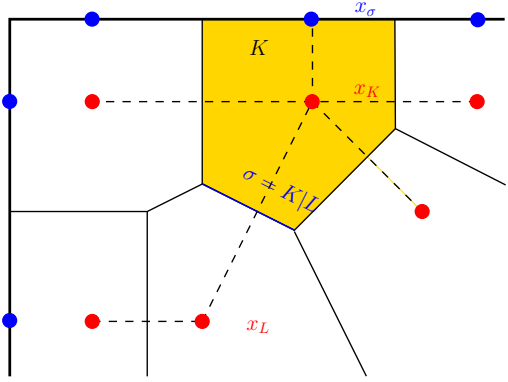}}
\caption{Illustration of an admissible mesh as in Definition~\ref{def:mesh}.}
\label{fig:mesh}
\end{figure}
We denote by $m_K$ the $d$-dimensional Lebesgue measure of the control volume $K$.
The set of the faces is partitioned into two subsets: the set $\Ee_{\rm int}$ of the interior faces defined by 
$
\Ee_{\rm int}= \left\{ \sig \in \Ee\; \middle| \; \sig = K|L\; \text{for some}\; K,L \in \Tt\right\}, 
$
and the set $\Ee_{\rm ext}$ of the exterior faces defined by 
$
\Ee_{\rm ext}= \left\{ \sig \in \Ee\; \middle| \; \sig \subset \p\O\right\}
$. For a given control volume $K\in\Tt$, we also define $\Ee_{K,{\rm int}} = \Ee_K \cap \Ee_{\rm int}$ the set of its faces that belong to $\Ee_{\rm int}$. For such a face $\sigma\in \Ee_{K,{\rm int}}$, we may write $\sigma=K|L$, meaning that $\sigma=\ov K \cap \ov L$, where $L\in\Tt$ .

Given $\sig \in \Ee$, we let
\[
d_\sig = \begin{cases}
|x_K - x_L| & \text{if}\; \sig = K|L \in \Ee_{\rm int}, \\
|x_K - x_\sig| & \text{if}\; \sig \in \Ee_{\rm ext}, 
\end{cases}
\qquad 
\text{and}\quad \tau_\sig = 
\frac{m_\sig}{d_\sig}.
\]
We finally introduce the size $h_\Tt$ and the regularity $\zeta_\Tt$ (which is assumed to be positive) of a discretization $(\Tt, \Ee, (x_K)_{K\in\Tt})$ of $\O$ by setting 
\[
h_\Tt = \max_{K \in \Tt}\;{\rm diam}(K), \qquad \zeta_\Tt = \min_{K\in\Tt}\min_{\sig\in\Ee_K} \frac{d(x_K,\sigma)}{d_\sig}.
\]

Concerning the time discretization of $(0,T)$, we consider an increasing finite family of times $0 = t_0 < t_1 < \dots, < t_{N_T} = T$. 
We denote by $\dt_n = t_{n}-t_{n-1}$ for $n\in\{1,\dots, N_T\}$, by $\boldsymbol{\Delta t} = \left(\dt_n\right)_{1\leq n \leq N_T}$, 
and by $h_T = \max_{1 \leq n \leq N_T} \dt_n$. 
In what follows, we will use boldface notations for mesh-indexed families, typically for elements of $\R^\Tt, {(\R^{\Tt})}^N$, 
${(\R^{\Tt})}^{N_T}$, or even ${(\R^\Tt)}^{N\times N_T}$.

\subsection{Numerical scheme}
\label{ssec:scheme}

The initial data $U^0\in L^\infty(\O;\Aa)$ is discretized into 
\[
\bU^0 = \left(\bu_i^0\right)_{i\in\lent 1, N \rent} \in {(\RR^\Tt)}^N = \left(u^0_{i,K}\right)_{K\in\Tt, i \in \lent 1, N \rent}
\] 
by setting
\begin{equation}\label{eq:cK0}
u_{i,K}^0 = \frac1{m_K} \int_K u_i^0(x) \d x, \qquad \forall K \in \Tt, i\in\lent1, N\rent. 
\end{equation}
Assume that $\bU^{n-1}=\left(u_{i,K}^{n-1}\right)_{K\in\Tt,i\in\lent1,N\rent}$ is given for some $n \geq 1$, 
then we have to define how to compute $\bU^n=\left(u_{i,K}^{n}\right)_{K\in\Tt,i\in\lent1,N\rent}$. 

First, we introduce some notations. Given any discrete scalar field $\bc=\left(c_{K}\right)_{K\in\Tt}\in \R^\Tt$, we define for all cell $K\in \Tt$ and interface $\sig \in \Ee_K$ the mirror value $c_{K\sig}$ of $c_{K}$ across $\sig$ by setting:
\begin{equation}\label{eq:mirror}
c_{K\sig} = \begin{cases}
c_{L}  &\text{if}\; \sig = K|L \in \Ee_{\rm int},\\
c_{K} & \text{if}\; \sig \in \Ee_{\rm ext}.
\end{cases}
\end{equation}
We also define the oriented and absolute jumps of $\bc$ across any edge by 
\[
D_{K\sig}\bc = c_{K\sig} - c_K, \quad D_\sig\bc = |D_{K\sig} \bc|, \qquad \forall K \in \Tt, \; \forall \sig \in \Ee_K.
\] 

The scheme is based on the formulation \eqref{eq:crossdiffand heat}. 
It requires the introduction of a parameter $a^\star$ on which we only have the following requirements:
\begin{equation}\label{eq:choiceofa}
a^\star>0 \qquad\text{and}\qquad a^\star\geq \min_{(i,j)}a_{i,j}.
\end{equation}
\begin{subequations}\label{scheme}
The conservation laws are discretized in a conservative way with a time discretization relying 
on the backward Euler scheme:
\begin{equation}\label{eq:scheme_c}
m_K\frac{u_{i,K}^n - u_{i,K}^{n-1}}{\dt_n}  + \sum_{\sig\in\Ee_K}F_{i,K\sig}^n = 0, \quad \forall K \in \Tt, \; \forall i \in \lent 1, N \rent.
\end{equation}
The discrete fluxes are computed thanks to a formula based on~\eqref{eq:crossdiffand heat} 
and on TPFA finite volumes:
\begin{equation}\label{eq:flux}
F_{i,K\sig}^n=-a^\star\tau_\sig D_{K\sig}\bu_i^n -\tau_\sigma\left(\sum_{j=1}^N (a_{i,j}-a^\star)\left(u_{j,\sigma}^nD_{K\sigma} u_i^n-u_{i,\sigma}^nD_{K\sigma} u_j^n\right)\right),
\end{equation}
for all $K\in\Tt$, $\sig\in\Ee_K$ and $i\in\lent1,N\rent$. 
Edge values $\left(u_{j,\sig}^n\right)_j $ of the concentrations $u_j$ appears in Formula~\eqref{eq:flux}. 
It is deduced from $u_{j,K}^n$ and $u_{j,K\sig}^n$ thanks to a logarithmic mean, i.e., 
\begin{equation}\label{eq:avg}
u_{j,\sigma}^n=\begin{cases}
0&\text{if }\min(u_{j,K}^n,u_{j,K\sigma}^n)\leq0,\\
u_{j,K}^n&\text{if } 0\leq u_{j,K}^n=u_{j,K\sigma}^n,\\
\dfrac{u_{j,K}^n-u_{j,K\sigma}^n}{\ln(u_{j,K}^n)-\ln(u_{j,K\sigma}^n)}&\text{otherwise}.
\end{cases}
\end{equation}
\end{subequations}
This choice for the edge concentration is crucial for the preservation at the discrete level of a discrete entropy - entropy dissipation inequality similar to the one highlighted in Proposition \ref{prop:entropy}. Equations \eqref{eq:flux} and \eqref{eq:mirror} implies that for all $\sig\in\Ee_{\rm ext}$: $F_{i,K\sig}^n=0$, so that the no-flux 
boundary condition~\eqref{eq:no-flux} is taken into account. 

\begin{remark}\label{rmk:astar}
Let us highlight why the choice of a strictly positive $a^\star$ is important. 
Consider a mesh with two cells $K,L$, and one edge. We consider two species 
and let $u^0_K=(0,1)$ and $u^0_L=(1,0)$. We have: $u_{1,K|L}^0=0$ and 
$u_{2,K|L}^0=0$, hence, if $a^\star=0$, the initial condition is a stationary solution 
even though this is not expected for a discretization of the heat equation. 
Setting $a^\star>0$ eliminates these spurious solutions.
The choice of $a^\star$ has a strong influence on the numerical outcomes, 
as it will be shown in Section~\ref{sec:num}, but we don't have a clear understanding yet 
on the methodology to choose an optimal $a^\star$. What {seems} clear is that $a^\star$ has to be chosen 
in the interval $[\min_{i\neq j} a_{i,j}, \max_{i\neq j} a_{i,j}]$. A tentative non-optimal formula is 
proposed in Section~\ref{sec:num}.
\end{remark}

{
\begin{remark}\label{rmk:higher-order}
The time discretization in scheme~\eqref{scheme} is only first{-}order accurate since it relies on 
the backward Euler approximation. Going to second{-}order time dis{c}retizations is tempting, 
but no theoretical guarantees concerning the entropy stability of the scheme can be granted then. 
This is due to the fact that the entropy~\eqref{eq:ent.mix} is not quadratic, hence neither 
the Crank-Nicolson scheme nor the BDF2 scheme can be shown to be unconditionally stable here. 
This lack of theoretical foundation for the entropy stability has for instance also been reported in
\cite{GS20}.
\end{remark}
}

\subsection{Main results and organization}\label{ssec:mainthms}

The first theorem proven is this paper concerns the existence of discrete solutions for a given mesh, 
and the preservation of the structural properties listed in Section \ref{ssec:obj}:
\begin{itemize}
\item the mass of each specie is conserved along the time steps;
\item the concentrations are (strictly) positive and sum to $1$ in all the cells, i.e., $U_K^n\in\Aa$ for all $K\in\Tt$ and $n\geq 1$; 
\item the discrete counterpart of the entropy decays along time.
\end{itemize}
For this last property, we need to introduce the discrete entropy functional $E_\Tt$, which is defined by:
\begin{equation}\label{eq:E_T}
E_\Tt(\bU) = \sum_{K\in\Tt}\sum_{i=1}^N m_K u_{i,K} \ln u_{i,K}, 
\qquad \forall \bU = \left(u_{i,K}\right)_{K\in\Tt, i\in\lent 1,N\rent} \in \Aa^\Tt. 
\end{equation}
As stated in Theorem~\ref{thm:main1} below, the nonlinear system corresponding to our scheme~\eqref{scheme}
admits solutions that preserve the physical bounds on the concentrations and the decay of the entropy. 

\begin{theorem}\label{thm:main1}
Let $(\Tt,\Ee,\left(x_K\right)_{K\in\Tt})$ be an admissible mesh and let $\bU^0$ be defined by~\eqref{eq:cK0}. Then, for all $1\leq n\leq N_T$, the nonlinear system of equations \eqref{eq:mirror} -- \eqref{scheme},  has a positive solution $\bU^{n}\in \Aa^{\Tt}$.
Moreover, such a solution satisfies $\ds E_\Tt(\bU^{n}) \leq E_\Tt(\bU^{n-1})$ for all $n\in \lent 1,N_T \rent$,
 $\sum_{K\in\Tt}m_K u_{i,K}^n=\int_\Omega u_i^0$ for all $i\in \lent 1,N \rent$ and $n\in \lent 0,N_T \rent$.
\end{theorem}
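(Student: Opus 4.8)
My plan is to establish the conservation and dissipation identities for an arbitrary solution first, since the entropy estimate is precisely what supplies the a priori bounds used for existence, and to treat existence together with strict positivity last.

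I would begin with mass conservation, which is purely algebraic. Summing \eqref{eq:scheme_c} over all $K\in\Tt$, I reorganize the flux terms face by face: the edge value $u_{j,\sig}^n$ defined by \eqref{eq:avg} is symmetric in the two cells adjacent to an interior face while the oriented jump $D_{K\sig}$ changes sign between them, so \eqref{eq:flux} gives $F_{i,K\sig}^n=-F_{i,L\sig}^n$ on each interior face and $F_{i,K\sig}^n=0$ on each exterior face. The face sum therefore telescopes to zero, yielding $\sum_{K}m_K u_{i,K}^n=\sum_K m_K u_{i,K}^{n-1}$; an induction started from \eqref{eq:cK0} gives $\sum_K m_K u_{i,K}^n=\int_\Omega u_i^0$. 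For the volume-filling constraint I then sum \eqref{eq:scheme_c} over the species index $i$: using the symmetry $a_{i,j}=a_{j,i}$ and relabelling $i\leftrightarrow j$, the whole bilinear cross-diffusion part of \eqref{eq:flux} cancels, leaving only $-a^\star\tau_\sig D_{K\sig}\big(\sum_i\bu_i^n\big)$. Hence $s_K:=\sum_{i}u_{i,K}$ solves a single linear backward-Euler discrete heat equation with $s_K^0=1$ (because $U^0\in\Aa$); as the associated matrix is an invertible M-matrix that fixes constants, the unique solution is $s_K^n\equiv1$, i.e.\ $\bU^n\in\Aa^\Tt$.

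The heart is the entropy decay. Convexity of $s\mapsto s\ln s$ gives $E_\Tt(\bU^n)-E_\Tt(\bU^{n-1})\le\sum_{K,i}m_K(\ln u_{i,K}^n+1)(u_{i,K}^n-u_{i,K}^{n-1})$, and the constant term drops by mass conservation. Inserting \eqref{eq:scheme_c} and performing a discrete integration by parts (using the conservativity and vanishing boundary fluxes above) turns the bound into $\dt_n\sum_i\sum_{\sig\in\Ee_{\rm int}}D_{K\sig}(\ln\bu_i^n)\,F_{i,K\sig}^n$. I then use the defining identity of the logarithmic mean, $u_{i,\sig}^n D_{K\sig}(\ln\bu_i^n)=D_{K\sig}u_i^n$, together with the symmetry of $A$, to recast each face contribution as $-\tau_\sig\big[a^\star(1-M_\sig)\sum_i u_{i,\sig}^n(D_{K\sig}\ln\bu_i^n)^2+\sum_{1\le i<j\le N}a_{i,j}u_{i,\sig}^n u_{j,\sig}^n(D_{K\sig}\ln\bu_i^n-D_{K\sig}\ln\bu_j^n)^2\big]$, where $M_\sig=\sum_i u_{i,\sig}^n$; this simplification uses both $\sum_i D_{K\sig}u_i^n=0$ and the bound $M_\sig\le1$, the latter because the logarithmic mean is dominated by the arithmetic mean and $\bU^n\in\Aa^\Tt$. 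Each bracket is nonnegative since $a^\star>0$, $a_{i,j}\ge0$ and the edge values are nonnegative, so the face sum is $\le0$ and $E_\Tt(\bU^n)\le E_\Tt(\bU^{n-1})$.

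Existence with strict positivity is where I expect the real difficulty, since this theorem assumes neither the non-degeneracy \eqref{eq:nondegenerate} nor any sign control on $a_{i,j}-a^\star$, and the logarithmic mean \eqref{eq:avg} degenerates as a concentration reaches $0$. I would obtain a solution by a topological degree argument: the scheme map is continuous on the closed simplex $\overline{\Aa}^\Tt$ (the log-mean extends continuously by $0$ on the boundary), and I would deform $A$ to the constant matrix $a^\star\mathbf{1}$, for which the scheme decouples into $N$ well-posed linear discrete heat equations, a problem of nonzero degree. Because the symmetry and sign structure are preserved along the homotopy, the entropy estimate above holds uniformly and confines the solutions, so the degree is invariant. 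Strict positivity would then follow from a discrete minimum principle: rewriting \eqref{eq:flux} shows that the effective self-diffusion coefficient $a^\star(1-M_\sig)+\sum_j a_{i,j}u_{j,\sig}^n$ is nonnegative while the cross term carries the factor $u_{i,\sig}^n$, which vanishes wherever $u_{i,K}^n=0$; evaluating \eqref{eq:scheme_c} at a cell realising a zero minimum of $u_i^n$ then forces $u_{i,K}^n\ge u_{i,K}^{n-1}>0$, a contradiction. Assembling this degree/entropy/minimum-principle argument in the possibly degenerate setting is the main obstacle; the identities above are comparatively mechanical once a positive solution is known to exist.
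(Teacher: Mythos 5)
Your conservation, volume-filling and entropy computations are correct and essentially identical to the paper's: mass conservation by flux conservativity, the sum $s_K=\sum_i u_{i,K}$ solving the well-posed backward Euler heat scheme, and the entropy decay via the chain rule $D_{K\sig}\bu_i^n=u_{i,\sig}^n D_{K\sig}\ln(\bu_i^n)$ together with the algebraic identity \eqref{eq:astarsurdiff} (your grouping $a^\star(1-M_\sig)\sum_i u_{i,\sig}^n(D_{K\sig}\ln \bu_i^n)^2+\sum_{i<j}a_{i,j}u_{i,\sig}^nu_{j,\sig}^n(\cdot)^2$ is the same computation rearranged, and you correctly note that it hides the use of $\sum_i D_{K\sig}\bu_i^n=0$, a point the paper glosses over). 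Your homotopy $A^{(\lambda)}=\lambda A+(1-\lambda)a^\star\mathbf{1}$ also coincides with the paper's $\lambda$-scaling of the $(a_{i,j}-a^\star)$ correction terms. The gaps are all in the existence/positivity half, which you yourself flag as unassembled.

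Three concrete problems. First, a circularity you do not resolve: signing the effective coefficient $a^\star(1-M_\sig)$ in the minimum principle requires $M_\sig=\sum_j u_{j,\sig}^n\leq 1$, which follows from the log-mean being below the arithmetic mean \emph{only when} $\bU^n$ is already known nonnegative with unit sum --- but nonnegativity is exactly what the minimum principle must prove, and in a degree argument you must control \emph{all} solutions of $\bHh(\lambda,\cdot)=\mathbf{0}$ in the closure of an open set, where no such constraint is available ($\sum_i u_{i,K}=1$ with negative entries allows $M_\sig>1$). The paper breaks this circle with a device you are missing: the auxiliary truncated scheme (S) with $\wt u_{i,\sig}=u_{i,\sig}/\max(1,\sum_j u_{j,\sig})$, for which $\sum_j\wt u_{j,\sig}\leq 1$ holds unconditionally; a posteriori its solutions lie in $\Aa^\Tt$, so $\wt u_{i,\sig}=u_{i,\sig}$ and one recovers \eqref{scheme} (Proposition~\ref{prop:systemeq}). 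Second, a topological degree ``on the closed simplex $\ov\Aa^\Tt$'' is not meaningful: $\Aa^\Tt$ has empty interior in $(\R^N)^\Tt$, and the scheme map does not preserve it. The paper instead works on the open neighborhood $\Aa^\Tt_\eta$ of the full space; moreover the confinement along the homotopy is \emph{not} the entropy estimate (which needs positivity even to be stated, via $\ln u$, and is anyway bounded on the simplex) but the a priori fact that every homotopy solution of the truncated system lies in $\Aa^\Tt$, via Lemmas~\ref{lem:nonnegativity} and~\ref{lem:admcond}. Third, your strict-positivity step ``forces $u_{i,K}^n\geq u_{i,K}^{n-1}>0$'' presumes strict positivity at the previous time level, which fails at $n=1$: $U^0$ is only nonnegative and may vanish on whole cells (as in the paper's own test case $U^0_r$), and if the zero cell's neighbors also vanish there is no local contradiction at all. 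The paper's Lemma~\ref{lem:positivity} repairs this using the global ingredients you omit --- mass positivity \eqref{eq:mass_pos}, conservation, and connectivity of $\O$ --- to locate an interface $\sig=K|L$ with $u_{i,K}^n=0<u_{i,L}^n$, where the strict bound $\sum_j u_{j,\sig}^n\leq 1-u_{i,L}^n/2<1$ yields the contradiction.
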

The proof of Theorem~\ref{thm:main1} will be the purpose of Section~\ref{sec:existence}.
With a discrete solution  $(\bU^{n})_{1\leq n\leq N_T}$ to the scheme~\eqref{scheme} at hand,
we can define the piecewise constant approximate solution $U_{\Tt,\bdt} = \left(u_{i,\Tt,\bdt}\right)_{i\in\lent 1,N\rent}: Q_T \to \Aa$
defined almost everywhere by 
\[
U_{\Tt, \boldsymbol{\Delta t}}(t,x) = U_K^{n} \quad \text{if}\;(t,x) \in (t_{n-1},t_n]\times K.
\]
This definition will be developed in Section \ref{sec:convergence} and supplemented by other reconstruction operators.
Let $\left(\Tt_m, \Ee_m, \left(x_K\right)_{K\in\Tt_m}\right)_{m\geq 1}$ be a sequence
of admissible discretizations with $h_{\Tt_m}, h_{T,m}$ tending to $0$ as $m$ tends to $+\infty$, 
while the regularity $\zeta_{\Tt_m}$ remains uniformly bounded from below by a positive constant $\zeta^\star$. Thanks to Theorem~\ref{thm:main1}, we dispose of a family $\bU_m$ of solutions to our scheme. The convergence of $\bU_m$ is the purpose of Theorem~\ref{thm:main2} whose proof is detailed in Section~\ref{sec:convergence}.
\begin{theorem}\label{thm:main2}
Assume that the nondegeneracy assumption~\eqref{eq:nondegenerate} holds.
Given any sequence of solutions $\bU_m= \left(u_{i,K}^n\right)_{i\in \lent 1, N\rent, K\in\Tt_m, 1 \leq n \leq N_{T,m}}$, there exists at least one $U \in L^\infty\left(Q_T; \Aa\right) \cap L^2\left((0,T);H^1(\O)\right)$ such that, up to a subsequence,
\begin{equation}\label{eq:conv_Lp}
\bU_{\Tt_m, \boldsymbol{\Delta t}_m}  \underset{m\to\infty} \longrightarrow U \quad \text{strongly in $L^p(Q_T)$, for any $1\leq p <\infty$},\end{equation}
Moreover, $U$ is a weak solution in the sense of Definition~\ref{def:weaksol}.
\end{theorem}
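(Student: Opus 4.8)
The plan is to follow the discrete entropy/compactness strategy for finite volume schemes. The starting point is the uniform a priori estimates furnished by Theorem~\ref{thm:main1}. Since $\bU_{\Tt_m,\bdt_m}$ takes its values in $\Aa$, we have $0\le u_{i,\Tt_m,\bdt_m}\le 1$, which gives a uniform $L^\infty(Q_T)$ bound for free. The crucial estimate is a uniform discrete $L^2(0,T;H^1(\O))$ bound. To obtain it, I would telescope the discrete entropy - entropy dissipation inequality of Theorem~\ref{thm:main1} over the time steps: this controls the total discrete dissipation $\sum_n \dt_n \sum_\sig \tau_\sig\,(\text{dissipation per edge})$ by $E_\Tt(\bU^0)-E_\Tt(\bU^{N_T})\le|\O|\ln N$, using that $-|\O|\ln N\le E_\Tt\le 0$ on $\Aa^\Tt$. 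I would then establish the discrete counterpart of the continuous chain of inequalities following \eqref{eq:nondegenerate}: the defining property of the logarithmic mean \eqref{eq:avg}, namely $u_{i,\sig}^n D_\sig\ln\bu_i^n = D_\sig\bu_i^n$, together with $\sum_i u_{i,K}^n=1$, shows that the discrete dissipation controls the discrete Dirichlet energy of $\sqrt{\bu_i^n}$; since $D_\sig\bu_i^n\le 2\,D_\sig\sqrt{\bu_i^n}$ (as $\sqrt{u}\le 1$), this bounds $\sum_n\dt_n\sum_\sig \tau_\sig (D_\sig\bu_i^n)^2$ uniformly once \eqref{eq:nondegenerate} is invoked.

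Second, I would upgrade these estimates into strong compactness. The discrete $L^2(0,T;H^1)$ bound controls the space translates of $\bU_{\Tt_m,\bdt_m}$, while the scheme \eqref{eq:scheme_c} together with the flux formula \eqref{eq:flux} (whose terms are all dominated by the discrete gradients just bounded) controls the time translates. A discrete Aubin--Lions/Kolmogorov compactness argument then yields, up to a subsequence, a limit $U$ with $\bU_{\Tt_m,\bdt_m}\to U$ strongly in $L^2(Q_T)$; interpolating with the $L^\infty$ bound gives \eqref{eq:conv_Lp} for every $1\le p<\infty$. The same discrete $H^1$ bound, via the uniform lower bound $\zeta_{\Tt_m}\ge\zeta^\star$ on the regularity, guarantees that the reconstructed discrete gradients converge weakly in $L^2(Q_T)^d$ to $\grad u_i$, so that $U\in L^2(0,T;H^1(\O))$. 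Passing to the limit in the strong convergence, $u_i\ge 0$ and $\sum_i u_i=1$ a.e., hence $U\in L^\infty(Q_T;\Aa)$.

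Third, I would pass to the limit in the scheme. Fixing $\varphi\in C^\infty_c([0,T)\times\overline\O)$ and testing \eqref{eq:scheme_c} against $\dt_n\varphi(t_{n-1},x_K)$, summation by parts in time reproduces $-\iint_{Q_T}u_i\p_t\varphi - \int_\O u_i^0\varphi(0,\cdot)$ up to a consistency error vanishing with $h_{\Tt_m},h_{T,m}$, while summation by parts in space turns the flux contribution into a sum over edges of $F_{i,K\sig}^n D_{K\sig}\varphi$. The linear part $-a^\star\tau_\sig D_{K\sig}\bu_i^n$ converges to $a^\star\iint_{Q_T}\grad u_i\cdot\grad\varphi$ by the usual TPFA consistency combined with the weak $L^2$ convergence of the discrete gradient. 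For the nonlinear part I would exploit that each product is of weak--strong type: the edge value $u_{j,\sig}^n$ of \eqref{eq:avg}, being squeezed between the two adjacent cell values, converges \emph{strongly} to $u_j$, whereas $D_{K\sig}\bu_i^n/d_\sig$ converges only \emph{weakly} to $\grad u_i$; their product converges to $u_j\grad u_i$, and analogously for $u_{i,\sig}^n D_{K\sig}\bu_j^n$. This identifies the limit of the flux term as $\iint_{Q_T}\big[a^\star\grad u_i+\sum_j(a_{i,j}-a^\star)(u_j\grad u_i-u_i\grad u_j)\big]\cdot\grad\varphi$. Finally, since $\sum_j u_j=1$ holds for the limit, the identity $\sum_j(u_j\grad u_i-u_i\grad u_j)=\grad u_i$ of \eqref{eq:changeofastar} lets the $a^\star$ terms recombine, so that the limit is exactly the weak formulation \eqref{eq:wf} and the parameter $a^\star$ disappears, as it should.

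I expect the main obstacle to be the rigorous treatment of the nonlinear flux limit, i.e. justifying the weak--strong product passage. This requires proving the strong $L^2$ convergence of the edge reconstructions $u_{j,\sig}^n$, which hinges on the monotonicity and consistency of the logarithmic mean together with the strong convergence already obtained for the cell values, and controlling the mismatch between the scalar TPFA increments $D_{K\sig}\bu_i^n/d_\sig$ appearing in the fluxes and a genuine vector-valued reconstructed gradient converging weakly to $\grad u_i$; the orthogonality condition of Definition~\ref{def:mesh} and the bound $\zeta^\star$ are what make this reconstruction consistent. A secondary technical point is to check that the degenerate edges where some $u_{j,\sig}^n=0$ do not spoil the limit, which is handled through the uniform bounds and dominated convergence.
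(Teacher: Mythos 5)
Your proposal is correct and follows essentially the same route as the paper: the telescoped discrete entropy--entropy dissipation inequality combined with the logarithmic-mean identity and the elementary inequalities $4\bigl(\sqrt{a}-\sqrt{b}\bigr)^2\leq(a-b)(\ln a-\ln b)$ and $D_\sigma \bu_i \leq 2 D_\sigma\sqrt{\bu_i}$ to get the discrete $L^2(0,T;H^1)$ bound, a discrete Aubin--Lions argument with a dual-norm estimate on the time increments for strong $L^2$ (hence $L^p$) compactness, strong convergence of the logarithmic-mean edge reconstructions, a weakly convergent TPFA gradient paired with a strongly consistent reconstruction for the test function, and the final recombination of the $a^\star$ terms via \eqref{eq:changeofastar} using $\sum_j u_j = 1$. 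The one point you flag as the main obstacle (the weak--strong product in the nonlinear flux) is handled in the paper exactly as you anticipate, through Lemma~\ref{lem:c_Ee} and the asymmetric gradient reconstruction of~\cite{DE_FVCA8}.
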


{
\begin{remark}\label{rmk:error}
Theorem~\ref{thm:main2} above establishes the convergence of the scheme, but no error estimate can be deduced 
from its proof. Indeed, its proof can be thought of as an adaptation to the discrete setting of an existence proof based 
on compactness arguments, as the for instance proposed in~\cite{Jungel15}. The derivation of error estimates is 
different since it relies on the perturbation of uniqueness proofs. As far as we know, uniqueness for the problem 
under consideration is an open question excepted in the one-dimensional setting~\cite{berendsenStrongSolutionsWeakstrong2019}, 
but a natural approach would be to derive error estimates based on the use of the relative entropy, as done for instance 
in~\cite{CMS16} for hyperbolic systems or in~\cite{GHMN16, GMN19} for the compressible Navier-Stokes problem.
However, the aforementioned strategy generally leads to under-optimal error estimates. 
The recovery of optimal error estimates for finite volume approximation of diffusion equations on unstructured grids 
has only been achieved recently \cite{DN18}. The extension of this optimal result to our much more complex cross-diffusion 
system appears to be an interesting and challenging issue. 
\end{remark}
}

\section{Numerical analysis on a fixed mesh}\label{sec:existence}

This section is devoted to the proof of Theorem~\ref{thm:main1}. In Section~\ref{ssec:apriori}, 
we establish a priori estimates on a slightly modified scheme that will be shown to 
reduce to the original scheme~\eqref{scheme}. 
Then in Section~\ref{ssec:existence}, we apply a topological degree argument 
to prove the existence of solutions to our scheme. 
Section~\ref{ssec:entropy} is devoted to the proof of the entropy dissipation property.

To prove the existence of solutions to the system of equations \eqref{scheme}, we need the inequality $\sum_i u_{i,\sigma}\leq 1$. We then slightly modify \eqref{scheme} by adding the following equation:
\[
\wt{u}_{i,\sigma}^n=\dfrac{u_{i,\sigma}^n}{\max(1,\sum_{j=1}^Nu_{j,\sigma}^n)},
\]
and replacing $u_{i,\sigma}^n$ by $\wt{u}_{i,\sigma}^n$ in \eqref{eq:flux}. We will denote this new system (S) and see in Proposition~\ref{prop:systemeq} that its solutions satisfy $\sum_i u_{i,\sigma}\leq 1$, so that $\wt{u}_{i,\sigma}^n=u_{i,\sigma}^n$. Whence they also satisfy the original system of equations.

\subsection{A priori estimates}\label{ssec:apriori}

The first lemma shows the nonnegativity of the solutions to (S). 
\begin{lemma}\label{lem:nonnegativity}
Given a nonnegative $\bU^{n-1}$, any solution $\bU^n$ to (S) is also nonnegative.
\end{lemma}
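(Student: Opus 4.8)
The plan is to prove the statement by a discrete minimum principle, arguing by contradiction. Suppose $\bU^n$ solves (S) but is not nonnegative, and let the global minimum $\min_{i,K}u_{i,K}^n$ (attained since there are finitely many cells and species) be reached at some pair $(i_0,K_0)$ with $u_{i_0,K_0}^n<0$. The goal is to show that the $(i_0,K_0)$ equation of~\eqref{eq:scheme_c} forces $u_{i_0,K_0}^n\geq u_{i_0,K_0}^{n-1}\geq 0$, contradicting negativity.

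First I would exploit the logarithmic-mean convention~\eqref{eq:avg}. Because $u_{i_0,K_0}^n<0$, for every $\sig\in\Ee_{K_0}$ we have $\min(u_{i_0,K_0}^n,u_{i_0,K_0\sig}^n)\leq u_{i_0,K_0}^n<0$, hence $u_{i_0,\sig}^n=0$ and therefore $\wt u_{i_0,\sig}^n=0$. This kills the term carrying $D_{K_0\sig}\bu_j^n$ in the flux~\eqref{eq:flux}, leaving
\[
F_{i_0,K_0\sig}^n=-\tau_\sig\, D_{K_0\sig}\bu_{i_0}^n\left(a^\star+\sum_{j=1}^N (a_{i_0,j}-a^\star)\,\wt u_{j,\sig}^n\right).
\]

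Next I would check that the scalar coefficient $\lambda_\sig:=a^\star+\sum_{j}(a_{i_0,j}-a^\star)\wt u_{j,\sig}^n$ is nonnegative. Rewriting
\[
\lambda_\sig=a^\star\Big(1-\sum_{j=1}^N\wt u_{j,\sig}^n\Big)+\sum_{j=1}^N a_{i_0,j}\,\wt u_{j,\sig}^n,
\]
both contributions are nonnegative: the edge values produced by~\eqref{eq:avg} are nonnegative (being either $0$ or the logarithmic mean of two positive numbers), so $\wt u_{j,\sig}^n\geq 0$; the normalization built into (S) guarantees $\sum_j\wt u_{j,\sig}^n\leq 1$, which together with $a^\star>0$ makes the first term nonnegative; and the second term is nonnegative since $a_{i_0,j}\geq 0$. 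I expect this nonnegativity of $\lambda_\sig$ to be the main obstacle, and it is precisely where both the modification to (S) (ensuring $\sum_j\wt u_{j,\sig}^n\leq 1$) and the requirement $a^\star>0$ are genuinely used; without either one the sign could fail.

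Finally, minimality of $u_{i_0,K_0}^n$ gives $D_{K_0\sig}\bu_{i_0}^n=u_{i_0,K_0\sig}^n-u_{i_0,K_0}^n\geq 0$ for all $\sig\in\Ee_{K_0}$ (with the value $0$ on exterior faces), so each $F_{i_0,K_0\sig}^n\leq 0$ and hence $\sum_{\sig\in\Ee_{K_0}}F_{i_0,K_0\sig}^n\leq 0$. Inserting this into~\eqref{eq:scheme_c} yields
\[
m_{K_0}\frac{u_{i_0,K_0}^n-u_{i_0,K_0}^{n-1}}{\dt_n}=-\sum_{\sig\in\Ee_{K_0}}F_{i_0,K_0\sig}^n\geq 0,
\]
so that $u_{i_0,K_0}^n\geq u_{i_0,K_0}^{n-1}\geq 0$, which is the desired contradiction. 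The remaining steps are standard discrete maximum-principle bookkeeping; the only delicate point is the sign of $\lambda_\sig$ discussed above.
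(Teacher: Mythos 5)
Your proof is correct and follows essentially the same route as the paper's: a contradiction at a cell realizing a negative minimum, where the logarithmic-mean convention forces $\wt{u}_{i_0,\sig}^n=0$, the normalization built into (S) gives $\wt{u}_{j,\sig}^n\geq 0$ and $\sum_j \wt{u}_{j,\sig}^n\leq 1$ so the effective edge coefficient $a^\star\bigl(1-\sum_j\wt{u}_{j,\sig}^n\bigr)+\sum_j a_{i_0,j}\wt{u}_{j,\sig}^n$ is nonnegative, and minimality gives $D_{K_0\sig}\bu_{i_0}^n\geq 0$. The only cosmetic differences are that you take the global minimum over pairs $(i,K)$ rather than per species and phrase the contradiction as $u_{i_0,K_0}^n\geq u_{i_0,K_0}^{n-1}\geq 0$, which matches the paper's sign clash exactly (note in passing that for this particular lemma $a^\star\geq 0$ already suffices; strict positivity is needed elsewhere, e.g.\ for Lemma~\ref{lem:positivity}).
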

\begin{proof}
 Let $\bU^n$ be a solution of (S) and let $i\in\lent 1, N\rent$. We consider a cell $K\in\Tt$ where $\bu_i^n$ reaches {its} minimum, 
 i.e., $u_{i,K}^n \leq u_{i,L}^n$ for all $L \in \Tt$, and assume for contradiction that $u_{i,K}^n$ is (strictly) negative. 
 Equation \eqref{eq:flux} then gives:
\[
 m_K\frac{u_{i,K}^n - u_{i,K}^{n-1}}{\dt_n}  =- \sum_{\sig\in\Ee_K}F_{K\sig}^n.
\]
The term on the left hand side is negative since $u_{i,K}^{n-1} \geq 0 > u_{i,K}^n$, 
whereas the right-hand side may be simplified noticing that $\wt{u}_{i,\sigma}^n=0$:
\[
\sum_{\sig\in\Ee_K}a^\star\tau_\sig D_{K\sig}\bu_i^n +\sum_{\sig\in\Ee_K} \tau_\sigma\sum_{j=1}^N (a_{i,j}-a^\star)\wt{u}_{j,\sigma}^nD_{K\sigma} \bu_i^n=- \sum_{\sig\in\Ee_K}F_{K\sig}^n<0.
\]
Noticing that $D_{K\sig}\bu_i^n\geq 0$, $\wt{u}_{j,\sigma}^n\geq 0$, 
and $\sum_{j=1}^N\wt{u}_{j,\sigma}^n\leq 1$ we obtain that 
\[
0\leq \sum_{\sig\in\Ee_K}a^\star(1-\sum_{j=1}^N\wt{u}_{j,\sigma}^n)\tau_\sig D_{K\sig}\bu_i^n< 0,
\]
which is absurd, hence the desired result.
\end{proof}

Let us now show that the concentrations sum to $1$ in all the cells. 
\begin{lemma}\label{lem:admcond}
Given $\bU^{n-1}$ in $\Aa^\Tt$ , any solution $\bU^n$ to (S) is also in $\Aa^\Tt$.
\end{lemma}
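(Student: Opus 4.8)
The plan is to combine the nonnegativity already obtained in Lemma~\ref{lem:nonnegativity} with the remaining claim that $\sum_{i=1}^N u_{i,K}^n = 1$ in every cell. First I would introduce the discrete total-mass field $s_K^n = \sum_{i=1}^N u_{i,K}^n$ and sum the conservation law~\eqref{eq:scheme_c} over $i \in \lent 1, N\rent$. Since $\bU^{n-1} \in \Aa^\Tt$ forces $\sum_i u_{i,K}^{n-1} = 1$, the accumulation term collapses to $m_K (s_K^n - 1)/\dt_n$, and the summed equation becomes a relation between $s_K^n-1$ and the summed fluxes $\sum_{i=1}^N F_{i,K\sig}^n$.

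The crucial step is to evaluate $\sum_{i=1}^N F_{i,K\sig}^n$ from~\eqref{eq:flux} (with $\wt u$). The cross-diffusion part is $-\tau_\sig \sum_{i,j}(a_{i,j}-a^\star)\bigl(\wt u_{j,\sig}^n D_{K\sig}\bu_i^n - \wt u_{i,\sig}^n D_{K\sig}\bu_j^n\bigr)$; denoting the summand by $T_{ij}$, the symmetry $a_{i,j}=a_{j,i}$ yields $T_{ji}=-T_{ij}$, so the double sum vanishes. This is the exact discrete counterpart of the continuous identity $\sum_{i,j} a_{i,j}(u_j\nabla u_i - u_i\nabla u_j)=0$ already used at the continuous level. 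What survives is only the $a^\star$-Laplacian part: by linearity of $D_{K\sig}$ one has $\sum_i D_{K\sig}\bu_i^n = D_{K\sig}\mathbf{s}^n$ with $\mathbf s^n=(s_K^n)_K$, so $\sum_{i=1}^N F_{i,K\sig}^n = -a^\star \tau_\sig D_{K\sig}\mathbf s^n$.

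Setting $w_K=s_K^n-1$ (so that $D_{K\sig}\mathbf s^n = D_{K\sig}\mathbf w$, constants dropping out of the jumps), I would then obtain that $\mathbf w$ solves the linear, homogeneous, backward-Euler discretization of the heat equation with no-flux boundary conditions: $m_K w_K/\dt_n - a^\star\sum_{\sig\in\Ee_K}\tau_\sig D_{K\sig}\mathbf w = 0$ for every $K\in\Tt$. To conclude $\mathbf w\equiv 0$, I would multiply by $w_K$, sum over $K$, and perform a discrete summation by parts. Because the fluxes vanish on exterior faces (where $D_{K\sig}\mathbf w=0$), only interior faces contribute and the diffusion term becomes $-a^\star\sum_{\sig\in\Ee_{\rm int}}\tau_\sig(D_\sig\mathbf w)^2$. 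This gives $\tfrac1{\dt_n}\sum_{K\in\Tt} m_K w_K^2 + a^\star\sum_{\sig\in\Ee_{\rm int}}\tau_\sig(D_\sig\mathbf w)^2 = 0$; since $a^\star>0$, $\tau_\sig>0$ and $m_K>0$, both nonnegative contributions must vanish, forcing $w_K=0$, i.e. $s_K^n=1$, for all $K$. Combined with Lemma~\ref{lem:nonnegativity} this yields $U_K^n\in\Aa$, that is $\bU^n\in\Aa^\Tt$.

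The heart of the argument — and the only genuinely nonroutine step — is recognizing and exploiting the antisymmetric cancellation of the cross-diffusion fluxes under summation over $i$; the rest is the standard discrete energy (or maximum-principle) machinery. One should check that introducing $\wt u$ does not spoil this cancellation, but it does not: the argument is purely algebraic and relies solely on the symmetry of $A$, never on any property of $\wt u_{i,\sig}^n$. As an alternative to the summation by parts, $\mathbf w\equiv 0$ could equally be deduced from a discrete maximum principle by inspecting the cells where $\mathbf w$ attains its extreme values.
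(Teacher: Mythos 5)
Your proof is correct and takes essentially the same route as the paper: summing the scheme over the species, cancelling the cross-diffusion fluxes by the symmetry of $A$ (which, as you rightly observe, is untouched by the truncation $\wt{u}_{i,\sig}^n$), and identifying the discrete total concentration as the solution of the backward Euler TPFA heat scheme with datum $\boldsymbol{1}$. The only difference is that where the paper simply invokes the well-posedness of that linear scheme to conclude $\sum_i \bu_i^n = \boldsymbol{1}$, you make the uniqueness step explicit via the discrete energy estimate on $s_K^n - 1$, which is a standard and valid argument.
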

\begin{proof}
Thanks to Lemma~\ref{lem:nonnegativity}, it suffices to show that
$\sum_{i=1}^N u_{i,K}^n=1$ for all $K \in \Tt.$
Let $\bU^n$ be a solution to (S). Using \eqref{eq:flux} in \eqref{eq:scheme_c} and summing over the species 
leads to:
\begin{multline*}
\dfrac{\sum_{i=1}^N u^n_{i,K}-\sum_{i=1}^N u^{n-1}_{i,K}}{\Delta t_n} m_K
-a^\star\sum_{\sigma\in \Ee_K} \tau_\sigma D_{K\sigma}\sum_i \bu_i\\
-\sum_{\sigma\in \Ee_K}\tau_\sigma\sum_i\left(\sum_{j=1}^N (a_{i,j}-a^\star)\left(\wt{u}_{j,\sigma}^nD_{K\sigma} \bu_i-\wt{u}_{i,\sigma}^nD_{K\sigma} \bu_j\right)\right)=0, 
\quad \forall K \in \Tt.
\end{multline*}
The third term of the left-hand side vanishes thanks to the symmetry of $A$, so that 
\begin{equation*}
\dfrac{\sum_{i=1}^N u^n_{i,K}-\sum_{i=1}^N u^{n-1}_{i,K}}{\Delta t_n} m_K
-a^\star\sum_{\sigma\in \Ee_K} \tau_\sigma D_{K\sigma}\sum_i \bu_i
=0, \quad \forall K \in \Tt.
\end{equation*}
The discrete quantity $\sum_i \bu_i$ is solution to the classical backward Euler TPFA scheme for the heat equation,
which is well posed. So $\sum_i\bu_i^n=\sum_i\bu^{n-1}_i=\boldsymbol{1}$ is its unique solution, hence the desired result.
\end{proof}

\subsection{Existence of solutions}\label{ssec:existence}
Using the tools exposed in the previous subsection, we may derive the existence of a solution to (S):
\begin{prop}
 Given $\bU^{n-1}$ in $\Aa^\Tt$, there exists at least one solution to (S) in $\Aa^\Tt$.
\end{prop}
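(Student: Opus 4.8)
The plan is to establish existence via a topological degree (Leray--Schauder) argument, using the a priori estimates already at our disposal. First I would set up a homotopy connecting the nonlinear system (S) to a simple, explicitly solvable linear system whose solution lies in $\Aa^\Tt$. A natural choice is to scale the off-diagonal cross-diffusion contributions by a parameter $\lambda \in [0,1]$: at $\lambda = 0$ the fluxes reduce to the pure heat-equation fluxes $F_{i,K\sig}^n = -a^\star \tau_\sig D_{K\sig}\bu_i^n$, for which the backward Euler TPFA scheme is a well-posed linear system with a unique solution in $\Aa^\Tt$ (this is exactly the structure exploited in the proof of Lemma~\ref{lem:admcond}); at $\lambda = 1$ we recover the full system (S). Concretely, I would define a map $\mathcal{H}: \Aa^\Tt \times [0,1] \to (\R^\Tt)^N$ whose zeros at parameter $\lambda$ are the solutions of the $\lambda$-deformed scheme, and show this map is continuous in both arguments. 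Continuity is where the logarithmic mean~\eqref{eq:avg} must be handled, but $u_{j,\sig}^n$ is a continuous function of $(u_{j,K}^n, u_{j,K\sig}^n)$ on $\R_+^2$, so the truncated quantity $\wt u_{j,\sig}^n$ is continuous as well.

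The key step is to produce a ball (or bounded open set) in the relevant finite-dimensional space on which the degree is well-defined and nonzero, and this is precisely what Lemmas~\ref{lem:nonnegativity} and~\ref{lem:admcond} provide uniformly in $\lambda$. I would verify that the nonnegativity argument of Lemma~\ref{lem:nonnegativity} and the summation-to-one argument of Lemma~\ref{lem:admcond} go through verbatim for every $\lambda \in [0,1]$: the symmetry of $A$ that makes the coupling terms cancel in the sum over species is insensitive to the scaling $\lambda$, and the sign argument in Lemma~\ref{lem:nonnegativity} only uses $0 \le \sum_j \wt u_{j,\sig}^n \le 1$ together with $a^\star > 0$, which survive the deformation. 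This yields a priori bounds independent of $\lambda$, guaranteeing that no solution of the $\lambda$-problem escapes to the boundary of the chosen domain, which is the crucial hypothesis for homotopy invariance of the degree. Since the $\lambda = 0$ problem is a well-posed linear system with a single solution inside the domain, its degree is $\pm 1$, and homotopy invariance transfers this to $\lambda = 1$, forcing the existence of at least one solution to (S).

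One technical point deserves care: the functional $\Aa^\Tt$ is a manifold with boundary (the nonnegativity and the affine constraint $\sum_i u_{i,K} = 1$), so to apply topological degree cleanly I would not work directly on $\Aa^\Tt$ but rather reformulate the problem on a full-dimensional open set. A convenient device is to eliminate the affine constraint using $\sum_i \bu_i^n = \boldsymbol 1$ (established in Lemma~\ref{lem:admcond}), reducing to $N-1$ independent concentrations per cell, and to phrase the degree argument on an open subset of $(\R^\Tt)^{N-1}$ containing the region where all concentrations are strictly positive and sum to less than one. Alternatively, one can retain all $N$ components but add the constraint as part of the fixed-point map. I would then invoke the strict positivity: because the entropy estimate (or the minimum principle reasoning) prevents concentrations from touching zero for $a^\star > 0$, the a priori estimates confine solutions to a compact subset of the open positivity region, so the degree is computed on an open set on whose boundary $\mathcal{H}(\cdot,\lambda)$ never vanishes.

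The main obstacle I anticipate is not the degree machinery itself but the bookkeeping needed to make the a priori estimates genuinely \emph{uniform} in $\lambda$ and to rule out solutions approaching the degenerate boundary where some $u_{i,K}^n = 0$, at which the logarithmic mean and the entropy lose their good properties. The nonnegativity Lemma~\ref{lem:nonnegativity} only gives $u_{i,K}^n \ge 0$, whereas strict positivity is needed to stay in the interior; I expect this gap to be bridged by the same argument that forbids a strict interior minimum from reaching zero, namely that if $u_{i,K}^n = 0$ at a minimizing cell then the flux balance combined with $a^\star > 0$ and $D_{K\sig}\bu_i^n \ge 0$ forces the neighbouring values to vanish as well, propagating to a globally zero species and contradicting the mass-positivity assumption~\eqref{eq:mass_pos} carried through the conservation of mass. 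Carefully formalizing this propagation, and confirming it holds along the entire homotopy, is the delicate part of the proof.
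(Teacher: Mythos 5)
Your overall skeleton is exactly the paper's: the same homotopy scaling the cross-diffusion terms by $\lambda\in[0,1]$, the observation that $\lambda=0$ gives the well-posed backward Euler heat equation with solution in $\Aa^\Tt$, the remark that Lemmas~\ref{lem:nonnegativity} and~\ref{lem:admcond} hold uniformly in $\lambda$, and homotopy invariance of the Leray--Schauder degree. Where you diverge from the paper --- and where your argument develops a genuine gap --- is in the choice of the open set on which the degree is computed. You propose to work on (a reduction of) the open region where all concentrations are \emph{strictly} positive, and you therefore need solutions of every $\lambda$-problem to stay uniformly away from the degenerate boundary $u_{i,K}=0$. This step would fail under the stated hypothesis: the proposition assumes only $\bU^{n-1}\in\Aa^\Tt$, which allows a species to vanish identically at step $n-1$; in that case conservation forces $\bu_i^n\equiv \mathbf{0}$ for every $\lambda$, so all solutions sit exactly on the boundary of your open set and the degree argument collapses. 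The zero-propagation argument you sketch to bridge the gap is the paper's Lemma~\ref{lem:positivity}, which requires the additional hypothesis $\sum_K m_K u_{i,K}^{n-1}>0$ (inherited from~\eqref{eq:mass_pos}) and, moreover, only gives strict positivity solution by solution, not the quantitative, $\lambda$-uniform lower bound your boundary-avoidance condition needs; extracting such a bound would require an extra compactness argument you do not supply.

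The deeper point is that this entire detour is unnecessary, because your premise that the scheme ``loses its good properties'' at $u_{i,K}=0$ is not correct for the map whose zeros you seek: the truncated logarithmic mean~\eqref{eq:avg} is continuous on all of $\R^2$, including at zero and negative arguments (as $b\to 0^+$ the logarithmic mean tends to $0$), so $\bHh(\lambda,\cdot)$ is continuous on the whole of $(\R^N)^\Tt$ with no positivity restriction. The paper exploits this by fattening the (indeed degenerate, lower-dimensional) set $\Aa^\Tt$ into the full-dimensional bounded open set $\Aa^\Tt_\eta$ of points at distance less than $\eta$ from $\Aa^\Tt$: Lemma~\ref{lem:admcond}, valid for every $\lambda$, confines all zeros of $\bHh(\lambda,\cdot)$ to $\Aa^\Tt$, hence strictly inside $\Aa^\Tt_\eta$, which is all that homotopy invariance requires. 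This simultaneously disposes of the manifold-with-boundary issue you rightly flagged (no reduction to $N-1$ species needed) and of the positivity issue (strict positivity is proved \emph{after} existence, as a separate statement under the extra mass hypothesis). With your paragraphs three and four replaced by this fattening device, your proof coincides with the paper's.
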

\begin{proof}
The proof relies on a topological degree argument~\cite{LS34,Dei85}. 
The idea is to transform continuously our complex nonlinear system into a linear system 
while guaranteeing that the a priori estimates controlling the 
solution remain valid all along the homotopy. We sketch the main ideas of the proof, 
making the homotopy explicit. We are interested in the existence of zeros for a functional 
\[
\bHh: \begin{cases}
[0,1] \times (\R^N)^\Tt  \to (\R^N)^\Tt\\
(\lambda, \bU) \mapsto \bHh(\lambda, \bU)
\end{cases}
\]
that boils down to the scheme (S) when $\lambda = 1$. In our case, we set:
\begin{multline}
\bHh(\lambda, \bU)_{i,K}= \dfrac{ u_{i,K}- u^{n-1}_{i,K}}{\Delta t_n} m_K
-a^\star\sum_{\sigma\in \Ee_K} \tau_\sigma D_{K\sigma} \bu_i\\
-\lambda\sum_{\sigma\in \Ee_K}\tau_\sigma\left(\sum_{j=1}^N (a_{i,j}-a^\star)\left(\wt{u}_{j,\sigma}D_{K\sigma} \bu_i-\wt{u}_{i,\sigma}D_{K\sigma} \bu_j\right)\right), \quad \forall K\in\Tt, \forall i\in\lent 1, N\rent.
\end{multline}
One notices that $\bHh(0, \bU)={\bf 0}$ is the classical heat equation, the solution of which belongs to $\Aa^\Tt$.  
Therefore, fixing $\eta>0$, the relatively compact open set 
\[
\Aa^\Tt_\eta = \left\{\; \bU \in \R^{\Tt} \;\middle| \; \inf_{\boldsymbol{V} \in \Aa^\Tt} \| \bU - \boldsymbol V\| < \eta\;\right\}
\] 
has a topological degree equal to $1$. 
Note that the choice of the norm in the definition of $\Aa^\Tt_\eta$ is not important since the dimension is finite. 
Moreover, thanks to Lemma \ref{lem:admcond}, the solutions $\bu^{(\lambda)}$ of $\bHh(\lambda,\bU)={\bf 0}$ remains in $\Aa^\Tt$, 
thus in the interior of $\Aa^\Tt_\eta$. Thus the topological degree of $\Aa^\Tt_\eta$ for $\lambda=1$ 
is still equal to $1$, hence the existence of (at least) one solutions to (S). Since $\eta>0$ is arbitrary, then there is a solution 
in $\Aa^\Tt = \bigcap_{\eta>0} \Aa^\Tt_\eta$.
\end{proof}

To prove the Theorem~\ref{thm:main1}, we need to transfer this {existence} result on the original system.
\begin{prop}\label{prop:systemeq}
	A solution $\bU^n$ of (S) is a solution of \eqref{scheme}.
	Reciprocally, a solution of \eqref{scheme} in $\Aa^\Tt$ is a solution of (S).
\end{prop}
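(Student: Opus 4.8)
The plan is to observe that (S) and~\eqref{scheme} differ only through the normalization factor $\max(1,\sum_{j=1}^N u_{j,\sigma}^n)$ appearing in the definition of $\wt{u}_{i,\sigma}^n$: by construction $\wt{u}_{i,\sigma}^n = u_{i,\sigma}^n$ precisely when $\sum_{j=1}^N u_{j,\sigma}^n \leq 1$, and in that case the fluxes~\eqref{eq:flux} assembled from $\wt{u}_{i,\sigma}^n$ and from $u_{i,\sigma}^n$ coincide edge by edge, so the two systems are literally identical. The whole statement therefore reduces to establishing the edge bound $\sum_{j=1}^N u_{j,\sigma}^n \leq 1$ for every $\sigma \in \Ee$, in each of the two situations considered.

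The key elementary ingredient is that the logarithmic mean~\eqref{eq:avg} is dominated by the arithmetic mean, namely
\[
u_{i,\sigma}^n \leq \frac{u_{i,K}^n + u_{i,K\sigma}^n}{2}, \qquad \forall K\in\Tt,\ \forall \sigma \in \Ee_K,\ \forall i \in \lent 1, N \rent.
\]
I would verify this directly from~\eqref{eq:avg}: it holds trivially (the left-hand side being $0$) when $\min(u_{i,K}^n, u_{i,K\sigma}^n)\leq 0$, it is an equality when $u_{i,K}^n = u_{i,K\sigma}^n$, and in the remaining case it is the classical inequality $\frac{a-b}{\ln a - \ln b} \leq \frac{a+b}{2}$ valid for distinct positive reals $a,b$.

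It then remains to combine this bound with the admissibility of the cell values. For the direct implication, a solution $\bU^n$ of (S) belongs to $\Aa^\Tt$ by Lemmas~\ref{lem:nonnegativity} and~\ref{lem:admcond}, so that $\sum_{i=1}^N u_{i,K}^n = \sum_{i=1}^N u_{i,K\sigma}^n = 1$, using the mirror convention~\eqref{eq:mirror} on exterior faces. Summing the arithmetic-mean bound over $i$ yields $\sum_{i=1}^N u_{i,\sigma}^n \leq 1$, hence $\wt{u}_{i,\sigma}^n = u_{i,\sigma}^n$ and $\bU^n$ solves~\eqref{scheme}. For the converse, a solution of~\eqref{scheme} lying in $\Aa^\Tt$ enjoys the same cell-wise normalization, and its edge values are still given by the logarithmic mean~\eqref{eq:avg}; the identical computation gives $\sum_{i=1}^N u_{i,\sigma}^n \leq 1$, so again $\wt{u}_{i,\sigma}^n = u_{i,\sigma}^n$ and the solution of~\eqref{scheme} solves (S). The argument is essentially bookkeeping once the logarithmic-mean estimate is in hand; the only mildly delicate points, both completely elementary, are the case-by-case verification of that inequality and the careful treatment of boundary faces through the mirror values.
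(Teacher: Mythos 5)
Your proposal is correct and follows essentially the same route as the paper's own proof: both rest on the fact that the logarithmic mean~\eqref{eq:avg} is dominated by the arithmetic mean, combined with the admissibility $\bU^n \in \Aa^\Tt$ (via Lemmas~\ref{lem:nonnegativity} and~\ref{lem:admcond} for the direct implication, by hypothesis for the converse) to obtain $\sum_i u_{i,\sigma}^n \leq 1$ and hence $\wt{u}_{i,\sigma}^n = u_{i,\sigma}^n$. Your explicit case-by-case verification of the logarithmic-mean inequality and the treatment of boundary faces through the mirror convention~\eqref{eq:mirror} merely spell out details the paper leaves implicit.
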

\begin{proof}
Let $\bU^n$ be a solution of (S). A simple convexity argument shows that the logarithmic mean of two nonnegative number 
is smaller than the arithmetic mean, so that $u_{i,\sigma}^n\leq \frac{u_{i,K}^n+u_{i,K\sigma}^n}2$. 
Summing w.r.t. $i\in\lent 1, N \rent$ and using that the solution $\bU$ of (S) belongs to $\Aa^\Tt$, one gets that 
\(
 \sum_i u_{i,\sigma}^n\leq 1
 \)
 for all $\sig \in \Ee$. Therefore $\wt{u}_{i,\sigma}^n=u_{i,\sigma}^n$ and $\bU^n$ is a also solution to \eqref{scheme}. 
 The proof of the reverse implication follows the same lines.
\end{proof}

\subsection{Entropy dissipation}\label{ssec:entropy}
We intend here to prove a discrete counterpart to Proposition~\ref{prop:entropy}. The proof will be very similar and requires
a discrete counterpart of the conservation of mass (Lemma~\ref{lem:masscons}).
\begin{lemma}\label{lem:conservation}
Given any $\bU^{n-1}\in\Aa^\Tt$, any solution $\bU^n$ to \eqref{scheme} satisfies:
\[
\sum_{K\in\Tt}m_K u_{i,K}^n=\sum_{K\in\Tt}m_K u_{i,K}^{n-1} = \int_\O u_i^0 \d x, \quad \forall i \in \lent 1,N\rent.
\]
\end{lemma}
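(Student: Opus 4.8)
The plan is to establish the discrete conservation of mass by summing the scheme~\eqref{eq:scheme_c} over all cells $K\in\Tt$ and exploiting the conservative (flux-balanced) structure of the two-point flux approximation. The key observation is that the fluxes $F_{i,K\sig}^n$ are defined per interface and obey the antisymmetry relation $F_{i,K\sig}^n = -F_{i,L\sig}^n$ whenever $\sig = K|L \in \Ee_{\rm int}$, while $F_{i,K\sig}^n = 0$ for $\sig \in \Ee_{\rm ext}$ by the remark following~\eqref{eq:avg}. First I would sum~\eqref{eq:scheme_c} over $K\in\Tt$ for a fixed species index $i$, yielding
\[
\sum_{K\in\Tt} m_K \frac{u_{i,K}^n - u_{i,K}^{n-1}}{\dt_n} + \sum_{K\in\Tt}\sum_{\sig\in\Ee_K} F_{i,K\sig}^n = 0.
\]
The double sum of fluxes can be reorganized as a sum over faces $\sig\in\Ee$; each interior face $\sig=K|L$ contributes the pair $F_{i,K\sig}^n + F_{i,L\sig}^n$, and each exterior face contributes a single term.

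The heart of the argument is to verify that the flux antisymmetry $F_{i,K\sig}^n + F_{i,L\sig}^n = 0$ holds on interior faces. This follows directly from the flux formula~\eqref{eq:flux}: the oriented jump satisfies $D_{K\sig}\bu_i^n = -D_{L\sig}\bu_i^n$, while the edge values $u_{j,\sig}^n$ defined by the logarithmic mean~\eqref{eq:avg} are symmetric in $K$ and $L$ (the logarithmic mean depends only on the unordered pair of values $u_{j,K}^n, u_{j,K\sig}^n$). Hence every term in~\eqref{eq:flux} flips sign when $K$ and $L$ are interchanged, giving the desired antisymmetry. Combined with the vanishing of exterior fluxes, the entire double sum of fluxes collapses to zero, leaving
\[
\sum_{K\in\Tt} m_K\, u_{i,K}^n = \sum_{K\in\Tt} m_K\, u_{i,K}^{n-1}.
\]

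To conclude, I would propagate this equality down to the initial step by a straightforward induction on $n$, and then identify $\sum_{K\in\Tt} m_K u_{i,K}^0$ with $\int_\O u_i^0\,\d x$ using the definition~\eqref{eq:cK0} of the discrete initial data, since $\sum_{K\in\Tt} m_K u_{i,K}^0 = \sum_{K\in\Tt} \int_K u_i^0(x)\,\d x = \int_\O u_i^0(x)\,\d x$. I do not anticipate any genuine obstacle here; the only point requiring a moment's care is the bookkeeping in the face-wise reorganization of the flux sum and the explicit verification that the logarithmic-mean edge value is symmetric under the exchange of the two adjacent cells, so that the conservative cancellation is exact. This lemma is the discrete analogue of Lemma~\ref{lem:masscons}, and its proof mirrors the continuous one where the divergence integrates to a boundary flux that vanishes under the no-flux condition~\eqref{eq:no-flux}.
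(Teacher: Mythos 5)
Your proof is correct and follows exactly the route the paper indicates: the paper dispenses with details, stating only that the lemma is ``a straightforward calculation based on equation \eqref{eq:scheme_c}, the conservativity of the fluxes, and the definition~\eqref{eq:cK0} of the discrete initial condition,'' and your argument fills in precisely those steps. In particular, your explicit verification that the logarithmic-mean edge values~\eqref{eq:avg} are symmetric in the two adjacent cells (so that $F_{i,K\sig}^n+F_{i,L\sig}^n=0$ on interior faces), together with the vanishing exterior fluxes and the induction back to~\eqref{eq:cK0}, is exactly the intended content.
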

The proof of this lemma is a straightforward calculation based on equation \eqref{eq:scheme_c}, the conservativity of the fluxes, 
and the definition~\eqref{eq:cK0} of the discrete initial condition.
With this lemma and Proposition~\ref{prop:systemeq}, we can refine the result Lemma~\ref{lem:nonnegativity} 
to get the strict positivity of any solution to \eqref{scheme} belonging to $\Aa^\Tt$.
\begin{lemma}\label{lem:positivity}
Let $\bU^{n-1}\in\Aa^\Tt$ be such that $\sum_K m_K u_{i,K}^{n-1}>0$ for all $i\in\lent 1, N\rent$, 
then any solution to \eqref{scheme} in $\Aa^\Tt$ is positive: $u^n_{i,K}>0$ for all $i\in\lent 1, N\rent$ and all $K\in\Tt$.
\end{lemma}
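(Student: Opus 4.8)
The plan is to argue by contradiction, exploiting the rigidity that a vanishing concentration at a cell imposes on the scheme, and then to propagate this rigidity across the mesh until it clashes with the strict positivity of the total mass guaranteed by Lemma~\ref{lem:conservation}. Fix $i \in \lent 1, N\rent$ and set $Z = \{K \in \Tt : u_{i,K}^n = 0\}$. By Lemma~\ref{lem:conservation} we have $\sum_{K\in\Tt} m_K u_{i,K}^n = \sum_{K\in\Tt} m_K u_{i,K}^{n-1} > 0$, so $Z \neq \Tt$. Assume for contradiction that $Z \neq \emptyset$. Since $\O$ is connected, the cell-adjacency graph of $\Tt$ is connected, so there exist $K_0 \in Z$ and $L \notin Z$ sharing an interior face, which I denote $\sig_0 = K_0|L$; thus $u_{i,K_0}^n = 0 < u_{i,L}^n$.

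Next I would simplify the fluxes leaving $K_0$. By Lemma~\ref{lem:nonnegativity} the value $u_{i,K_0}^n=0$ is the global minimum of $\bu_i^n$, whence $D_{K_0\sig}\bu_i^n \ge 0$ for every $\sig \in \Ee_{K_0}$; moreover the logarithmic mean~\eqref{eq:avg} gives $u_{i,\sig}^n = 0$, since one of its arguments vanishes. Plugging this into~\eqref{eq:flux} cancels the term $u_{i,\sig}^n D_{K_0\sig} u_j^n$ and yields $F_{i,K_0\sig}^n = -\tau_\sig D_{K_0\sig}\bu_i^n\, c_\sig$, where $c_\sig := a^\star + \sum_{j=1}^N (a_{i,j}-a^\star) u_{j,\sig}^n = a^\star\big(1 - \sum_j u_{j,\sig}^n\big) + \sum_j a_{i,j} u_{j,\sig}^n$. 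Using Proposition~\ref{prop:systemeq} (which gives $\sum_j u_{j,\sig}^n \le 1$), the nonnegativity of the $a_{i,j}$, and $a^\star > 0$ from~\eqref{eq:choiceofa}, one gets $c_\sig \ge 0$, hence $F_{i,K_0\sig}^n \le 0$ for every $\sig \in \Ee_{K_0}$. Summing~\eqref{eq:scheme_c} over the faces then gives $0 \ge \sum_{\sig\in\Ee_{K_0}} F_{i,K_0\sig}^n = m_{K_0} u_{i,K_0}^{n-1}/\dt_n \ge 0$, so every term $F_{i,K_0\sig}^n$ must vanish (and incidentally $u_{i,K_0}^{n-1}=0$).

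In particular, across the distinguished face $F_{i,K_0\sig_0}^n = -\tau_{\sig_0} D_{K_0\sig_0}\bu_i^n\, c_{\sig_0} = 0$, while $\tau_{\sig_0} > 0$ and $D_{K_0\sig_0}\bu_i^n = u_{i,L}^n > 0$; hence $c_{\sig_0} = 0$. The crux, and the step I expect to require the only real work, is to contradict this by proving $c_{\sig_0} > 0$, which reduces to showing $\sum_j u_{j,\sig_0}^n < 1$. Here I would invoke the \emph{strict} logarithmic-versus-arithmetic mean inequality $u_{j,\sig_0}^n \le \tfrac12(u_{j,K_0}^n + u_{j,L}^n)$, with equality only when $u_{j,K_0}^n = u_{j,L}^n$ (the same comparison already used in Proposition~\ref{prop:systemeq}). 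Summing over $j$ and using $U_{K_0}^n, U_L^n \in \Aa$ gives $\sum_j u_{j,\sig_0}^n \le 1$, and the inequality is strict because for species $i$ one has $u_{i,K_0}^n = 0 \neq u_{i,L}^n$. Thus $\sum_j u_{j,\sig_0}^n < 1$ and $c_{\sig_0} \ge a^\star\big(1 - \sum_j u_{j,\sig_0}^n\big) > 0$, contradicting $c_{\sig_0}=0$. Therefore $Z = \emptyset$, i.e. $u_{i,K}^n > 0$ for every $K \in \Tt$; since $i \in \lent 1, N\rent$ was arbitrary, this proves the strict positivity.
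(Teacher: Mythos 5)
Your proof is correct and follows essentially the same route as the paper's: contradiction via conservation of mass and connectivity to find a face $\sig_0=K|L$ with $u_{i,K}^n=0<u_{i,L}^n$, vanishing of the logarithmic-mean edge value, simplification of the fluxes at the minimizing cell, and the strict bound $\sum_j u_{j,\sig_0}^n\leq 1-\tfrac{u_{i,L}^n}{2}<1$ from the logarithmic--arithmetic mean comparison. The only difference is presentational: you spell out explicitly that each individual flux $F_{i,K_0\sig}^n$ must vanish (obtaining $u_{i,K_0}^{n-1}=0$ as a byproduct), a step the paper compresses by referring back to the proof of Lemma~\ref{lem:nonnegativity}.
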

\begin{proof}
Let $\bU^n\in \Aa^\Tt$ be a solution to the scheme \eqref{scheme}, and let $i\in \lent1,N\rent$. 
We know from Lemma~\ref{lem:nonnegativity} that $\bu_i^n\geq {\bf 0}$. 
Assume for contradiction that there exists  one cell $K$ such that $u_{i,K}^0$ vanishes.
Using Lemma~\ref{lem:conservation} and the connectivity of $\Omega$, there exists $\sigma=K|L\in\Ee^\text{int}$ such that $u_{i,K}^n=0$ and $u_{i,L}^n>0$. Then $u_{i,\sigma}^n=0$ and as in the proof of Lemma~\ref{lem:nonnegativity}:
\[
a^\star(1-\sum_{j=1}^N {u_{j,\sigma}^n})\tau_\sig D_{K\sig}\bu_i^n\leq 0.
\]
Using $u_{j,\sig}^n \leq \frac{u_{j,K}^n + u_{j,L}^n}2$ and $u_{i,\sig}=0$ we deduce that 
\[
\sum_{j=1}^N {u_{j,\sigma}^n}\leq \sum_{j\neq i}^N \frac{u_{j,K}^n+u_{j,L}^n}{2}\leq 1-\frac{u_{i,L}^n}2<1. 
\]
Therefore 
\(
a^\star(1-\sum_{j=1}^N\wt{u}_{j,\sigma}^n)\tau_\sig>0, 
\)
and since $D_{K\sig}\bu_i^n>0$, we deduce that:
\[
0<a^\star(1-\sum_{j=1}^N\wt{u}_{j,\sigma}^n)\tau_\sig D_{K\sig}\bu_i^n\leq 0.
\]
As this statement is absurd, our assumption was false, hence the desired result.
\end{proof}
As in the continuous case, we will use the conservation of mass (Lemma~\ref{lem:conservation}) 
and a discrete equivalent of the chain rule $\nabla c=c\nabla\ln c$. This {equivalently} writes 
\be\label{eq:chainrule}
D_{K\sigma}\bu_i^n=u_{i,\sigma}^n D_{K\sigma}\ln(\bu_i^n), \quad \forall i \in \lent 1,N\rent, \; \forall K \in \Tt.
\ee
The above discrete chain rule follows from 
the definition~\eqref{eq:avg} of $u_{i,\sigma}^n$ and the positivity of solutions to \eqref{scheme}
which gives a sense to $\ln(\bu_i^n)$. 

Using~\eqref{eq:chainrule} in~\eqref{eq:flux}, $\bU^n$ satisfies
\begin{multline}\label{eq:discreteCDH-lnform}
   \frac{u^n_{i,K}-u^{n-1}_{i,K}}{\Delta t_n}m_K
-\sum_{\sigma\in \Ee_K}\tau_\sigma\left(\sum_{j=1}^N (a_{i,j}-a^\star)u_{i,\sigma}^nu_{j,\sigma}^n
\left(D_{K\sigma} \ln(\bu_i)-D_{K\sigma} \ln(\bu_j)\right)\right)\\-a^\star\sum_{\sigma\in \Ee_K} 
\tau_\sigma D_{K\sigma}\bu_i^n=0, \quad \forall K\in\Tt,\forall i\in\lent1,N\rent.
\end{multline}
This reformulation is suitable for proving a discrete entropy - entropy dissipation inequality, 
which should be seen as a discrete counterpart of Proposition~\ref{prop:entropy}.
\begin{prop}\label{prop:EEDdiscrete}
	Given $\bU^{n-1}$ in $\Aa^\Tt$, any solution $\bU^n\in \Aa^\Tt$ to~\eqref{scheme} satisfies 
\begin{equation}\label{eq:EEDdiscrete}
	E_\Tt(\bU^n)- E_\Tt(\bU^{n-1}) 
	+\Delta t_n \min_{1\leq i,j\leq N} a_{i,j}\sum_{\sigma\in \Ee}\sum_{i=1}^N \tau_\sigma u_{i,\sigma}^n(D_{K\sigma}\ln(\bu_i^n))^2\leq 0.
\end{equation}
In particular, $E_\Tt(\bU^n) \leq E_\Tt(\bU^{n-1})$.
\end{prop}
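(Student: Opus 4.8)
The plan is to follow the continuous argument of Proposition~\ref{prop:entropy}, adapting its two ingredients to the discrete level: the chain rule in time must be replaced by a convexity inequality (because backward Euler is used and the entropy is not quadratic), and the integration by parts becomes a discrete summation by parts relying on the conservativity of the numerical fluxes. Throughout, write $g_{i,\sigma} := D_{K\sigma}\ln(\bu_i^n)$, $S_\sigma := \sum_{i=1}^N u_{i,\sigma}^n$ and $\mu := \min_{1\leq i,j\leq N} a_{i,j}$; recall that $\bU^n$ is positive by Lemma~\ref{lem:positivity}, so that $\ln(\bu_i^n)$ and $g_{i,\sigma}$ make sense.

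First I would bound the entropy increment from above. Since $\phi(x) = x\ln x$ is convex, $\phi(u_{i,K}^n) - \phi(u_{i,K}^{n-1}) \leq (\ln u_{i,K}^n + 1)(u_{i,K}^n - u_{i,K}^{n-1})$. Multiplying by $m_K$ and summing over $K \in \Tt$ and $i$, the contribution of the ``$+1$'' vanishes by the discrete mass conservation of Lemma~\ref{lem:conservation}, leaving
\[
E_\Tt(\bU^n) - E_\Tt(\bU^{n-1}) \leq \sum_{K\in\Tt}\sum_{i=1}^N m_K \ln(u_{i,K}^n)\left(u_{i,K}^n - u_{i,K}^{n-1}\right).
\]
Using the scheme~\eqref{eq:scheme_c} to substitute $m_K(u_{i,K}^n - u_{i,K}^{n-1}) = -\Delta t_n \sum_{\sigma\in\Ee_K} F_{i,K\sigma}^n$, then reorganizing the sum over cells and faces (a discrete integration by parts, licit because the fluxes are conservative and vanish on $\Ee_{\rm ext}$), this becomes
\[
E_\Tt(\bU^n) - E_\Tt(\bU^{n-1}) \leq \Delta t_n \sum_{\sigma\in\Ee_{\rm int}}\sum_{i=1}^N F_{i,K\sigma}^n\, g_{i,\sigma},
\]
where each interior face is counted once with an arbitrary orientation (the summand being orientation-independent).

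Next I would insert the logarithmic-mean form of the flux. Applying the discrete chain rule~\eqref{eq:chainrule} in~\eqref{eq:flux} gives $F_{i,K\sigma}^n = -a^\star\tau_\sigma u_{i,\sigma}^n g_{i,\sigma} - \tau_\sigma\sum_{j}(a_{i,j}-a^\star)u_{i,\sigma}^n u_{j,\sigma}^n(g_{i,\sigma}-g_{j,\sigma})$, as already recorded in~\eqref{eq:discreteCDH-lnform}. Symmetrizing the $(i,j)$ double sum using $a_{i,j}=a_{j,i}$ (exactly as in the last line of the proof of Proposition~\ref{prop:entropy}) yields
\[
\sum_{\sigma\in\Ee_{\rm int}}\sum_{i=1}^N F_{i,K\sigma}^n g_{i,\sigma} = -a^\star\sum_{\sigma}\tau_\sigma\sum_{i=1}^N u_{i,\sigma}^n g_{i,\sigma}^2 - \frac12\sum_{\sigma}\tau_\sigma\sum_{i,j=1}^N (a_{i,j}-a^\star)\,u_{i,\sigma}^n u_{j,\sigma}^n\,(g_{i,\sigma}-g_{j,\sigma})^2.
\]

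The crux of the argument---and the step I expect to be the main obstacle---is to show that this expression is bounded above by $-\mu\sum_\sigma\tau_\sigma\sum_i u_{i,\sigma}^n g_{i,\sigma}^2$, i.e.\ that the $a^\star$ regularization does not spoil the dissipation. Since each $a_{i,j}\geq\mu$ and every summand $u_{i,\sigma}^n u_{j,\sigma}^n(g_{i,\sigma}-g_{j,\sigma})^2$ is nonnegative, I would bound the cross term from below by its value with $a_{i,j}$ replaced by $\mu$, and then use the elementary identity $\tfrac12\sum_{i,j}u_{i,\sigma}^n u_{j,\sigma}^n(g_{i,\sigma}-g_{j,\sigma})^2 = S_\sigma\sum_i u_{i,\sigma}^n g_{i,\sigma}^2 - (\sum_i u_{i,\sigma}^n g_{i,\sigma})^2$ to reorganize the whole edge contribution into
\[
-\mu\sum_i u_{i,\sigma}^n g_{i,\sigma}^2 + (a^\star-\mu)\left[(S_\sigma - 1)\sum_i u_{i,\sigma}^n g_{i,\sigma}^2 - \Big(\sum_i u_{i,\sigma}^n g_{i,\sigma}\Big)^2\right].
\]
The bracket is nonpositive because $S_\sigma = \sum_i u_{i,\sigma}^n \leq 1$ (shown in the proof of Proposition~\ref{prop:systemeq}) and squares are nonnegative, while the prefactor $a^\star-\mu$ is nonnegative by the requirement~\eqref{eq:choiceofa}; hence the correction term drops and only the $-\mu$ dissipation survives. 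Combining with the bound on the entropy increment gives~\eqref{eq:EEDdiscrete}, and $E_\Tt(\bU^n)\leq E_\Tt(\bU^{n-1})$ follows since $\mu\geq0$. It is precisely the interplay between the constraint $S_\sigma\leq1$ and the lower bound $a^\star\geq\mu$ that makes the regularized scheme entropy-dissipative, and this is what I would take most care over.
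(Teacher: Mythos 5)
Your proposal is correct and takes essentially the same route as the paper's proof: multiply the logarithmic form of the scheme by $\ln(u_{i,K}^n)$, sum by parts using flux conservativity, handle the time increment via convexity of $x\ln x$ together with mass conservation (Lemma~\ref{lem:conservation}), symmetrize the double sum in $(i,j)$, and absorb the $a^\star$ correction edge by edge using $\sum_i u_{i,\sigma}^n\leq 1$ and $a^\star\geq \min_{i,j} a_{i,j}$. The only difference is in the final algebraic step: you keep the term $\bigl(\sum_i u_{i,\sigma}^n D_{K\sigma}\ln(\bu_i^n)\bigr)^2$ and discard it by sign, whereas the paper's identity~\eqref{eq:astarsurdiff} silently uses that this term vanishes (by the chain rule~\eqref{eq:chainrule}, $\sum_i u_{i,\sigma}^n D_{K\sigma}\ln(\bu_i^n)=D_{K\sigma}\sum_i\bu_i^n=0$ since $\bU^n\in\Aa^\Tt$), so your variant is, if anything, marginally more robust since it does not invoke the volume-filling constraint at this point.
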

\begin{proof}
Multiplying equation \eqref{eq:discreteCDH-lnform} by $\Delta t_n\ln(u_{i,K}^n)$ and summing over the cells and species leads to:
\begin{multline}\label{eq:primalEED}
\sum_{K\in \Tt} \sum_{i=1}^N (u_{i,K}^n\ln(u_{i,K}^n)-u_{i,K}^{n-1}\ln(u_{i,K}^n))m_K
+\Delta t_n a^\star\sum_{\sigma\in \Ee}\sum_{i=1}^N \tau_\sigma u_{i,\sigma}^n(D_{K\sigma}\ln(\bu_i^n))^2\\
-\Delta t_n\!\!\sum_{K\in \Tt}\sum_{i=1}^N\sum_{\sigma\in \Ee_K}\!\!\tau_\sigma\left(\sum_{j=1}^N (a_{i,j}-a^\star)u_{j,\sigma}^nu_{i,\sigma}^n\ln(u_{i,K}^n)D_{K\sigma} \bigl(\ln(\bu_i^n)-\ln(\bu_j^n)\bigr)\right)=0.
\end{multline}
Using the symmetry of the matrix $A$ and discrete integration by part, both in space and with respect to the species, we have:
\begin{multline}\label{eq:crossdiffdiff}
\sum_{K\in \Tt}\sum_{i=1}^N\sum_{\sigma\in \Ee_K}\tau_\sigma\left(\sum_{j=1}^N (a_{i,j}-a^\star)u_{j,\sigma}^nu_{i,\sigma}^n\ln(u_{i,K}^n)D_{K\sigma} \bigl(\ln(\bu_i^n)-\ln(\bu_j^n)\bigr)\right)=\\
-\sum_{\sigma\in \Ee}\tau_\sigma\left(\sum_{1\leq i <j\leq N)} (a_{i,j}-a^\star)u_{j,\sigma}^nu_{i,\sigma}^n\Bigl(D_{K\sigma} \bigl(\ln(\bu_i^n)-\ln(\bu_j^n)\bigr)\Bigr)^2\right).
\end{multline}
On the other hand, the convexity of $c\ln(c)$ yields:
\[
u_{i,K}^n-u_{i,K}^{n-1}+u_{i,K}^n\ln(u_{i,K}^n)-u_{i,K}^{n-1}\ln(u_{i,K}^n) \geq u_{i,K}^n\ln(u_{i,K}^n)-u_{i,K}^{n-1}\ln(u_{i,K}^{n-1}).
\]
Combining this inequality with Equation \eqref{eq:crossdiffdiff} and Lemma~\ref{lem:conservation} in \eqref{eq:primalEED} provides:
\begin{multline*}
	E_\Tt(\bU^n)- E_\Tt(\bU^{n-1}) 
	+\Delta t_n a^\star\sum_{\sigma\in \Ee}\sum_{i=1}^N \tau_\sigma u_{i,\sigma}^n(D_{K\sigma}\ln(\bu_i^n))^2\\
	+\Delta t_n \sum_{\sigma\in \Ee}\tau_\sigma\left(\sum_{1\leq i <j\leq N} (a_{i,j}-a^\star)u_{j,\sigma}^nu_{i,\sigma}^n\Bigl(D_{K\sigma} \bigl(\ln(\bu_i^n)-\ln(\bu_j^n)\bigr)\Bigr)^2\right)\leq 0.
\end{multline*}
	Using the hypothesis $0\leq\min a_{i,j}\leq a^\star$ together with
	\begin{multline}\label{eq:astarsurdiff}
	\sum_{i=1}^N u_{i,\sigma}^n(D_{K\sigma}\ln(\bu_i^n))^2
	-\left(\sum_{1\leq i <j\leq N} u_{j,\sigma}^nu_{i,\sigma}^n\Bigl(D_{K\sigma} \bigl(\ln(\bu_i^n)-\ln(\bu_j^n)\bigr)\Bigr)^2\right) =\\
	\sum_{i=1}^N  u_{i,\sigma}^n\left(1-\sum_{j=1}^N u_{j,\sigma}^n\right) (D_{K\sigma}\ln(\bu_i^n))^2\geq 0,
	\end{multline}
	we deduce that \eqref{eq:EEDdiscrete} holds.
\end{proof}

The proof of Theorem~\ref{thm:main1} is now complete.

\section{Convergence analysis}\label{sec:convergence}{}

The goal of this Section is to prove Theorem~\ref{thm:main2}, which states the 
convergence of the approximate solution towards a weak solution 
to the continuous problem in the sense of Definition~\ref{def:weaksol}
under the nondegeneracy condition \eqref{eq:nondegenerate}. 
We could extend this result on several other special cases 
including the one treated in \cite{Anita2019}.
We hint that the optimal assumption would be that the zeros of the diffusion matrix 
form a cluster-graph. However, we stick to the study of the non-degenerate case for the sake of simplicity.

We consider here a sequence $\left(\Tt_m, \Ee_m, \left(x_K\right)_{K\in\Tt_m}\right)_{m\geq 1}$ 
of admissible discretizations with $h_{\Tt_m}, h_{T,m}$ tending to $0$ as $m$ tends to $+\infty$, 
while the regularity $\zeta_{\Tt_m}$ remains uniformly bounded from below by a positive constant $\zeta^\star$. 
Theorem~\ref{thm:main1} provides the existence of a family of discrete solutions 
$\bU_m= \left(u_{i,K}^n\right)_{i\in \lent 1, N\rent, K\in\Tt_m, 1 \leq n \leq N_m}$.
To prove Theorem~\ref{thm:main2}, we first establish in Section~\ref{ssec:compact} some compactness properties on the family 
of piecewise constant approximate solutions $U_{\Tt_m,\bdt_m}$. Then we identify 
the limit as a weak solution in Section~\ref{ssec:identify}. 
In order to enlighten the notations, we remove the subscript $m$ as soon as it is not necessary for understanding.

\subsection{Reconstruction operators}\label{ssec:reconstruct}
To carry out the convergence analysis, we introduce some reconstruction operators 
following the methodology proposed in~\cite{kangourou_2018}. 
The operators $\pi_\Tt: \R^\Tt \to L^\infty(\O)$ and $\pi_{\Tt,\bdt}:\left(\R^{\Tt}\right)^{N_T}\to L^\infty(Q_T)$ are defined respectively by 
\[
\pi_\Tt\bF(x) = f_K \quad\text{if}\; x \in K, \qquad \forall \bF = \left(f_K\right)_{K\in\Tt}, 
\]
and
\[
\pi_{\Tt,\bdt}\bF(t,x) = f_K^n \quad\text{if}\; (t,x) \in (t_{n-1},t_n] \times K, \qquad \forall \bF = \left(f_K^n\right)_{K\in\Tt, 1 \leq n \leq N_T}. 
\]
These operators allow to pass from the discrete solution ${(\bU^n)}_{1 \leq n \leq N_T}$ to the approximate solution since 
\[
u_{i,\Tt,\bdt} = \pi_{\Tt,\bdt} \left(\bu_i^n\right)_n, \quad \forall i \in \lent 1,N\rent.
\]

In order to carry out the analysis, we further need to introduce approximate gradient reconstruction. 
For $\sig = K|L \in \Ee_{\rm int}$, we denote by $\Delta_\sig$ the diamond cell corresponding to $\sig$, which is the interior of the convex hull of $\{\sig, x_K, x_L\}$. For $\sig \in \Ee_{\rm ext}$, the diamond cell $\Delta_\sig$ 
is defined as the interior of the convex hull of  $\{\sig, x_K\}$. 
The approximate gradient $\grad_{\Tt}: \R^{\Tt} \to L^2(\O)^d$ we use in the analysis is merely weakly consistent (unless $d=1$) and takes its source in~\cite{CHLP03, EG03}.
It is piecewise constant on the diamond cells $\Delta_\sig$, and it is defined as follows:
\[
\grad_{\Tt} \bF(x) =  d \frac{D_{K\sig} \bF}{d_\sig} n_{K\sig} \quad \text{if}\; x \in \Delta_\sig, \qquad \forall \bF \in \R^{\Tt},
\]
where $n_{K\sig}$ is the outer-pointing normal of $K$ at $\sig$.
We also define $\grad_{\Tt,\bdt}: \R^{\Tt\times N_T} \to L^2(Q_T)^d$ by setting 
\[
\grad_{\Tt,\bdt}\bF(t,\cdot) = \grad_\Tt \bF^n \quad \text{if}\; t\in(t_{n-1},t_n], \qquad \forall \bF = \left(\bF^n\right)_{1\leq n\leq N_T} \in \R^{\Tt\times N_T}.
\]
It follows from the definition of the approximate gradient that 
\begin{equation}\label{eq:scalarprod}
\sum_{\sig \in \Ee}\tau_\sig D_{K\sig} \bF D_{K\sig} \bg = \frac1d \int_\O \grad_{\Tt} \bF\cdot \grad_{\Tt} \bg \d x, 
\qquad \forall \bF, \bg \in \R^{\Tt}.
\end{equation}
This implies in particular that
\begin{equation}\label{eq:norm_L2H1}
\sum_{\sig \in \Ee}\tau_\sig |D_\sig \bF|^2 = \frac1d\int_{\O} |\grad_{\Tt} \bF|^2 \d x, \qquad \forall \bF \in  \R^{\Tt}.
\end{equation}

\subsection{Compactness properties}\label{ssec:compact}
 In this subsection, we take advantage of Proposition~\ref{prop:EEDdiscrete} 
 and of the non-degeneracy assumption~\eqref{eq:nondegenerate} to get enough compactness for the convergence.
\begin{lemma}\label{lem:L2H1_sqrt}
	There exists $C$ depending only on $\Omega$ and $\min_{i\neq j} a_{i,j}$ such that
	\[
		\sum_{i=1}^N\iint_{Q_T} |\grad_{\Tt_m, \bdt_m} \sqrt{\bu_{i,m}}|^2+ \left(\pi_{\Tt_m, \bdt_m} \sqrt{\bu_{i,m}}\right)^2\d x\d t \leq C, 
		\quad \forall m \geq 1.
	\]
\end{lemma}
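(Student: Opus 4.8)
The plan is to read the bound off the discrete entropy--entropy dissipation inequality of Proposition~\ref{prop:EEDdiscrete} after summing it over the time steps, and then to convert the resulting control on discrete logarithmic differences into a control on discrete gradients of the square roots. First a preliminary remark: since the initial masses are positive by~\eqref{eq:mass_pos} and conserved (Lemma~\ref{lem:conservation}), Lemma~\ref{lem:positivity} guarantees $u_{i,K}^n>0$ for all $i$, all $K\in\Tt$ and all $n\geq1$, so that every logarithm and logarithmic mean below is well defined; moreover $\bU^n\in\Aa^\Tt$ yields $0\leq u_{i,K}^n\leq1$ and $\sum_i u_{i,K}^n=1$.

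The heart of the proof is a discrete counterpart of the chain-rule identity $u\,|\nabla\ln u|^2=4\,|\nabla\sqrt u|^2$, in the form of the edge-wise inequality
\[
u_{i,\sigma}^n\,\bigl(D_{K\sigma}\ln(\bu_i^n)\bigr)^2 \;\geq\; 4\,\bigl(D_{K\sigma}\sqrt{\bu_i^n}\bigr)^2, \qquad \forall \sigma\in\Ee,\ \forall i\in\lent 1,N\rent.
\]
Writing $a=u_{i,K}^n$ and $b=u_{i,K\sigma}^n$ (both positive), the definition~\eqref{eq:avg} of the logarithmic mean gives $u_{i,\sigma}^n\,(D_{K\sigma}\ln\bu_i^n)^2=(b-a)(\ln b-\ln a)$, while the right-hand side equals $4(\sqrt b-\sqrt a)^2$. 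Factoring $b-a=(\sqrt b-\sqrt a)(\sqrt b+\sqrt a)$ and $\ln b-\ln a=2(\ln\sqrt b-\ln\sqrt a)$, the claim reduces to the elementary bound ``logarithmic mean $\leq$ arithmetic mean'' applied to the pair $(\sqrt a,\sqrt b)$, namely $\frac{\sqrt b-\sqrt a}{\ln\sqrt b-\ln\sqrt a}\leq\frac{\sqrt a+\sqrt b}{2}$. I expect this inequality, which plays the role of the discrete chain rule, to be the only genuinely technical step; the rest is bookkeeping.

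Next I would sum~\eqref{eq:EEDdiscrete} over $n\in\lent 1,N_T\rent$. The entropy terms telescope, and since $x\mapsto x\ln x$ satisfies $-1/e\leq x\ln x\leq0$ on $[0,1]$ one has $-N|\O|/e\leq E_\Tt(\bU^n)\leq0$ for every $n$ (this uses $\bU^n\in\Aa^\Tt$ and $\sum_K m_K=|\O|$). Hence
\[
\min_{i\neq j} a_{i,j}\sum_{n=1}^{N_T}\Delta t_n\sum_{\sigma\in\Ee}\sum_{i=1}^N \tau_\sigma u_{i,\sigma}^n\bigl(D_{K\sigma}\ln(\bu_i^n)\bigr)^2 \;\leq\; E_\Tt(\bU^0)-E_\Tt(\bU^{N_T}) \;\leq\; \frac{N|\O|}{e}.
\]
The nondegeneracy assumption~\eqref{eq:nondegenerate} makes the prefactor strictly positive; since the diagonal entries of $A$ cancel in~\eqref{eq:crossdiff} they are irrelevant and may be taken so that the dissipation coefficient in~\eqref{eq:EEDdiscrete} is exactly $\min_{i\neq j}a_{i,j}$. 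Plugging in the edge-wise inequality and using the norm identity~\eqref{eq:norm_L2H1}, which reads $\sum_\sigma\tau_\sigma(D_\sigma\sqrt{\bu_i^n})^2=\frac1d\int_\O|\grad_\Tt\sqrt{\bu_i^n}|^2$, turns the left-hand side into $\frac{4}{d}\sum_i\iint_{Q_T}|\grad_{\Tt,\bdt}\sqrt{\bu_i}|^2$, which is therefore bounded by a constant depending only on $\O$, $T$, $N$, $d$ and $\min_{i\neq j}a_{i,j}$ (the parameters $T,N,d$ being fixed data).

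Finally, the zeroth-order term is immediate: since $\pi_{\Tt,\bdt}$ is piecewise constant, $(\pi_{\Tt,\bdt}\sqrt{\bu_i})^2=\pi_{\Tt,\bdt}\bu_i$, and $\sum_i u_{i,K}^n=1$ gives $\sum_i\iint_{Q_T}(\pi_{\Tt,\bdt}\sqrt{\bu_i})^2=T|\O|$. Adding the gradient and the zeroth-order contributions yields the announced uniform bound $C$. The main obstacle is the edge-wise inequality of the second paragraph together with checking that the final constant depends only on the stated data; the telescoping and the two-sided entropy bound are routine.
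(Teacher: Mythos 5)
Your proposal is correct and follows essentially the same route as the paper's proof: sum the discrete entropy--entropy dissipation inequality \eqref{eq:EEDdiscrete} over time, bound the telescoped entropy using $\bU^n\in\Aa^\Tt$, convert the dissipation into gradients of square roots via the edge-wise inequality $4\bigl(\sqrt a-\sqrt b\bigr)^2\leq(a-b)(\ln a-\ln b)$ together with the identity \eqref{eq:norm_L2H1}, and get the zeroth-order term from the $L^\infty$ bound. The only cosmetic differences are that you prove the key inequality via ``logarithmic mean $\leq$ arithmetic mean'' applied to $(\sqrt a,\sqrt b)$ where the paper invokes Cauchy--Schwarz, and that you make explicit the harmless point (glossed over in the paper) that the diagonal entries of $A$ cancel and may be adjusted so the dissipation coefficient is $\min_{i\neq j}a_{i,j}$.
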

\begin{proof}
We get rid of the subscript $m$ for the ease of reading. The $L^\infty$ bound on $\bU$ yields immediately the $L^2$ estimate on $\pi_{\Tt, \bdt}\sqrt{\bu_i}$. The proof thus consists in proving the bound on the discrete gradient. 
Le us focus on the proof of
\(
\iint_{Q_T} |\grad_{\Tt, \bdt} \sqrt{\bu_i}|^2\d x\d t \leq C
\)
for some fixed $i\in\lent 1,N \rent$.
Thanks to \eqref{eq:norm_L2H1}, we have
$$
\begin{aligned}
\iint_{Q_T} |\grad_{\Tt, \bdt} \sqrt{\bu_i}|^2&=d\sum_{n=1}^{N_T} \Delta t_n \sum_{\sigma\in\Ee_{\rm int}} \tau_\sig \vert D_\sig\sqrt{\bu_i^n}\vert^2,\\
&=d\sum_{n=1}^{N_T} \Delta t_n \sum_{\sigma\in\Ee_{\rm int}} \tau_\sig \check{u}_{i\sig}^{n}
\vert D_\sig\ln(\bu_i^n)\vert^2,
\end{aligned}
$$
where $\check{u}_{i\sig}^n=4\frac{\left(D_\sig\sqrt{\bu_i^n}\right)^2}{\left(D_\sig\ln(\bu_i^n)\right)^2}$. 
It results from Cauchy-Schwarz inequality that 
\[
4 \left(\sqrt{a} - \sqrt{b}\right)^2 \leq (a-b) (\ln(a)-\ln(b)), \qquad \forall (a,b) \in (0,+\infty), 
\]
so that 
$\check{u}_{i\sig}^n\leq u_{i\sig}^n$.
Therefore, Proposition~\ref{prop:EEDdiscrete} provides:
$$
\min_{i\neq j} a_{i,j} \sum_{i=1}^N \iint_{Q_T} |\grad_{\Tt, \bdt} \sqrt{\bu_i}|^2
\leq  \frac{d}4\left( E_\Tt(\bU^0)-E_\Tt(\bU^{N_T})\right).
$$
As $E_\Tt$ is bounded between $-m_\Omega$ and $0$ and as, by hypothesis, $\min a_{i,j}>0$, we obtain the 
desired bound.
\end{proof}

The inequality $2 D_\sig \sqrt{\bu_i^n}\geq D_\sig \bu_i^n$ and Lemma~\ref{lem:L2H1_sqrt} yield the following discrete 
$L^2(0,T;H^1(\O))$ estimate on $\bu_i$. 
\begin{corollary}\label{coro:lavraiecompacite}
There exists $C$ depending only on $\Omega$ and $\min_{i\neq j} a_{i,j}$ such that
\[
		\sum_{i=1}^N\iint_{Q_T} |\grad_{\Tt_m, \bdt_m} \bu_{i,m}|^2+ \left(\pi_{\Tt_m, \bdt_m} \bu_{i,m}\right)^2\d x\d t \leq C, 
		\qquad \forall m\geq1.
\]
\end{corollary}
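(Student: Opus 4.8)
The plan is to transfer the bound of Lemma~\ref{lem:L2H1_sqrt} on $\sqrt{\bu_i}$ directly to $\bu_i$ itself, exploiting that the discrete concentrations lie in $[0,1]$. The only structural input beyond Lemma~\ref{lem:L2H1_sqrt} is the pointwise comparison between the jumps of $\bu_i$ and those of $\sqrt{\bu_i}$ announced just before the statement.

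First I would justify the inequality $2 D_\sig \sqrt{\bu_i^n} \geq D_\sig \bu_i^n$. Writing $\sig = K|L$ and factoring $|u_{i,K}^n - u_{i,L}^n| = |\sqrt{u_{i,K}^n} - \sqrt{u_{i,L}^n}|\,(\sqrt{u_{i,K}^n} + \sqrt{u_{i,L}^n})$, it suffices to note that, since $\bU^n \in \Aa^\Tt$ by Lemma~\ref{lem:admcond}, every concentration satisfies $0 \leq u_{i,K}^n \leq 1$, whence $\sqrt{u_{i,K}^n} + \sqrt{u_{i,L}^n} \leq 2$. On exterior faces the jump vanishes by~\eqref{eq:mirror}, so nothing is required there.

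Then I would square this inequality and insert it into the identity~\eqref{eq:norm_L2H1}, which expresses the discrete $H^1$ seminorm as the $\tau_\sig$-weighted sum of the squared jumps. Summing over $\sig \in \Ee_{\rm int}$ and over the time steps weighted by $\Delta t_n$ gives
\[
\iint_{Q_T} |\grad_{\Tt_m,\bdt_m}\bu_{i,m}|^2 \leq 4 \iint_{Q_T} |\grad_{\Tt_m,\bdt_m}\sqrt{\bu_{i,m}}|^2,
\]
so that Lemma~\ref{lem:L2H1_sqrt} controls the gradient term after summation over $i$, at the price of a factor $4$ in the constant. For the remaining $L^2$ term, the bound $0 \leq \pi_{\Tt_m,\bdt_m}\bu_{i,m} \leq 1$ inherited from $\bU^n \in \Aa^\Tt$ yields $\left(\pi_{\Tt_m,\bdt_m}\bu_{i,m}\right)^2 \leq \pi_{\Tt_m,\bdt_m}\bu_{i,m} \leq 1$, hence a bound by $N\,|Q_T|$ after summation over $i$. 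Adjusting $C$ accordingly concludes.

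I do not anticipate a genuine obstacle here: the estimate is an immediate corollary of Lemma~\ref{lem:L2H1_sqrt}, and the only point requiring a word of care is that the factorization argument relies precisely on the admissibility $\bU^n \in \Aa^\Tt$ to keep $\sqrt{u_{i,K}^n} + \sqrt{u_{i,L}^n}$ bounded by $2$; without the volume-filling constraint the comparison constant would degrade, and the passage from the square root to $\bu_i$ would no longer be uniform in $m$.
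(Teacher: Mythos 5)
Your proof is correct and matches the paper's argument exactly: the paper likewise derives the corollary from the pointwise inequality $2 D_\sig \sqrt{\bu_i^n}\geq D_\sig \bu_i^n$ (a consequence of $\bU^n\in\Aa^\Tt$, hence $u_{i,K}^n\in[0,1]$) combined with Lemma~\ref{lem:L2H1_sqrt} and the identity~\eqref{eq:norm_L2H1}. Your write-up simply makes explicit the factorization and the trivial $L^2$ bound that the paper leaves implicit.
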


The following proposition is about the relative compactness of the approximate solution and of the 
weakly consistent approximate gradient. 

\begin{prop}\label{prop:compact}
Let $(\bU_m)$ be the family of discrete solutions.
There exists at least one $U \in L^\infty(Q_T; \Aa)\cap L^2((0,T);H^1(\O))$ such that, up to a subsequence, for all $i\in\lent1,N\rent$:
\begin{align}\label{eq:conv_L2}
\pi_{\Tt_m, \bdt_m} \bu_{i,m}  &\underset{m\to\infty} \longrightarrow u_i \quad \text{strongly in $L^2(Q_T)$},
\\\label{eq:conv_grad_r}
\grad_{\Tt_m, \bdt_m} \bu_{i,m} &\underset{m\to\infty} \longrightarrow \grad u_i \quad \text{weakly in $L^2(Q_T)^d$}.
\end{align}
\end{prop}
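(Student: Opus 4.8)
The plan is to deduce the weak limits directly from the uniform bounds of Corollary~\ref{coro:lavraiecompacite}, then to upgrade the convergence of the reconstructed concentrations to a strong one by a translation-estimate/Kolmogorov argument, and finally to identify the weak limit of the discrete gradients with the gradient of the limiting concentration. First, Corollary~\ref{coro:lavraiecompacite} bounds $\pi_{\Tt_m,\bdt_m}\bu_{i,m}$ in $L^2(Q_T)$ (indeed in $L^\infty(Q_T)$, since $\bU_m\in\Aa^{\Tt_m}$ forces $0\le u_{i,K}^n\le 1$) and $\grad_{\Tt_m,\bdt_m}\bu_{i,m}$ in $L^2(Q_T)^d$, uniformly in $m$. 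Since $N$ is finite, the Banach--Alaoglu theorem and reflexivity of $L^2$ allow me to extract a single subsequence (not relabelled) along which, for every $i\in\lent 1,N\rent$, $\pi_{\Tt_m,\bdt_m}\bu_{i,m}\rightharpoonup u_i$ weakly in $L^2(Q_T)$ and $\grad_{\Tt_m,\bdt_m}\bu_{i,m}\rightharpoonup G_i$ weakly in $L^2(Q_T)^d$, for some $u_i\in L^2(Q_T)$ and $G_i\in L^2(Q_T)^d$.

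The heart of the proof is to turn this weak convergence into the strong convergence~\eqref{eq:conv_L2}, for which I would establish the two classical translation estimates and invoke the Kolmogorov--Riesz--Fr\'echet compactness criterion in its finite volume form~\cite{EGH00}. The space-translation estimate controls $\iint |\pi_{\Tt_m,\bdt_m}\bu_{i,m}(t,x+\xi)-\pi_{\Tt_m,\bdt_m}\bu_{i,m}(t,x)|^2\,\d x\,\d t$ by $C\,|\xi|\,(|\xi|+h_{\Tt_m})\sum_{n}\Delta t_n\sum_{\sig}\tau_\sig |D_\sig\bu_{i,m}^n|^2$; the mesh orthogonality makes this standard, and the right-hand side is bounded through~\eqref{eq:norm_L2H1} by Corollary~\ref{coro:lavraiecompacite}. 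The time-translation estimate is the genuinely delicate point and is where I expect the main obstacle to lie: testing the scheme~\eqref{eq:scheme_c} against $\Delta t_n\bigl(\pi_{\Tt_m,\bdt_m}\bu_{i,m}(\cdot+\tau)-\pi_{\Tt_m,\bdt_m}\bu_{i,m}\bigr)$ and summing by parts, one must bound $\iint |\pi_{\Tt_m,\bdt_m}\bu_{i,m}(t+\tau,x)-\pi_{\Tt_m,\bdt_m}\bu_{i,m}(t,x)|^2\,\d x\,\d t$ by $C(\tau+h_{T,m})$ uniformly in $m$. The constant is produced from the flux formula~\eqref{eq:flux}: the $a^\star$-term and the cross-diffusion terms are all controlled by combining the $L^\infty$ bound $0\le u_{j,\sig}^n\le 1$ with the discrete gradient bound of Corollary~\ref{coro:lavraiecompacite}, the non-quadratic nonlinearity of the fluxes being precisely what makes this estimate technical. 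With both translation estimates at hand, Kolmogorov's theorem yields relative compactness of $(\pi_{\Tt_m,\bdt_m}\bu_{i,m})_m$ in $L^2(Q_T)$, hence~\eqref{eq:conv_L2} after a further extraction; passing to an a.e.\ convergent subsequence and using that $\Aa$ is closed and convex gives $U=(u_i)_i\in L^\infty(Q_T;\Aa)$.

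It remains to show $G_i=\grad u_i$, which simultaneously yields $u_i\in L^2((0,T);H^1(\O))$ and the weak convergence~\eqref{eq:conv_grad_r}. I would test the discrete gradient against an arbitrary $\psi\in C^\infty_c(Q_T)^d$: from the piecewise-constant definition of $\grad_{\Tt_m}$ on the diamond cells and a discrete integration by parts, one rewrites $\iint_{Q_T}\grad_{\Tt_m,\bdt_m}\bu_{i,m}\cdot\psi\,\d x\,\d t$ as $-\iint_{Q_T}\pi_{\Tt_m,\bdt_m}\bu_{i,m}\,\dive\psi\,\d x\,\d t$ plus a consistency remainder of order $h_{\Tt_m}$, the regularity bound $\zeta_{\Tt_m}\ge\zeta^\star$ and the smoothness of $\psi$ ensuring that this remainder vanishes as $m\to\infty$. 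Passing to the limit --- weakly in the left-hand term, strongly (by~\eqref{eq:conv_L2}) in the right-hand one --- gives $\iint_{Q_T}G_i\cdot\psi = -\iint_{Q_T}u_i\,\dive\psi$ for all such $\psi$, so that $G_i=\grad u_i$ holds in the sense of distributions. Since $G_i\in L^2(Q_T)^d$, this establishes that $u_i\in L^2((0,T);H^1(\O))$ and completes the proof of~\eqref{eq:conv_grad_r}.
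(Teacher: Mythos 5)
Your proof is correct, but it follows a genuinely different route from the paper's. The paper does not use translation estimates and Kolmogorov at all: it defines a discrete dual norm $\|\bv\|_{-1}$ mimicking $L^2(0,T;(H^1(\O))')$, bounds $\bigl\|\frac{\bu_i^n-\bu_i^{n-1}}{\dt_n}\bigr\|_{-1}$ directly from the scheme \eqref{eq:scheme_c}--\eqref{eq:flux} using the $L^\infty$ bound on the edge values and Corollary~\ref{coro:lavraiecompacite}, and then invokes the discrete Aubin--Lions lemma of Gallou\"et--Latch\'e \cite{GL12} (in the form of \cite{Anita2019}), which delivers the strong $L^2$ convergence \emph{and} the weak convergence of the discrete gradients in one stroke. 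Note that the computation you carry out to get your time-translation estimate --- test the scheme against a translate, bound the fluxes by $|a_{i,j}-a^\star|$, $u_{j,\sig}^n\le 1$ and the discrete gradient bound --- is exactly the computation the paper performs to bound the dual norm; the Aubin--Lions packaging simply spares the Kolmogorov bookkeeping that your route must handle by hand: extension near $\p Q_T$, and the uniformity issue in your bound $C(\tau+h_{T,m})$, which is not uniformly small as $\tau\to0$ for the coarse discretizations and requires you to treat the finitely many indices with $h_{T,m}$ above a threshold separately (each fixed $L^2$ function has vanishing translates). Your identification $G_i=\grad u_i$ by discrete integration by parts against $\psi\in C_c^\infty(Q_T)^d$ is the standard weak-consistency argument for the diamond-cell gradient (in the spirit of \cite{CHLP03, EG03}) and is sound, the mesh orthogonality and $\zeta_{\Tt_m}\ge\zeta^\star$ controlling the consistency remainder; the paper gets this identification for free from the same compactness lemma. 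Finally, for $U\in L^\infty(Q_T;\Aa)$ the paper argues by duality with sign-based test functions plus linearity of the limit for the constraint $\sum_i u_i=1$, whereas you pass to an a.e.\ convergent subsequence and use that $\Aa$ is closed --- your variant is if anything more direct. In sum: same two underlying estimates consumed, your approach is more elementary and self-contained (no appeal to \cite{GL12}), the paper's is shorter and avoids the translation-estimate technicalities.
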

\begin{proof}
	We  drop the subscript $m$ for clarity.
The proof of this result relies on a discrete Aubin-Lions lemma \cite[Lemma 3.4]{GL12} on the particular setting of \cite[Lemma 9]{Anita2019}. 
Define the discrete $L^2(0,T;(H^1(\O))')$ norm by duality as follows:
\[
\|\bv\|_{-1}=\sup\left\{ \int_\Omega \pi_{\Tt} \bv \pi_{\Tt} \bvarphi,\; \|\pi_{\Tt}\bvarphi\|_{L^2}^2+ \|\nabla_{\Tt}\bvarphi\|_{L^2}^2=1\right\}, 
\qquad \forall \bv \in \R^\Tt. 
\]
Therefore if $\|\nabla_{\Tt, \bdt} \bu_i\|_{L^2(Q_T)}\leq C$ and $\sum_n \|\bu_i^n-\bu_i^{n-1}\|_{-1}\leq C$, then, up to a subsequence, 
$\pi_{\Tt, \bdt} \bu_{i}$ tends towards some $u_i$ in $L^2(Q_T)$, while $\grad_{\Tt, \bdt} \bu_{i}$ 
converges weakly towards $\grad u_i$. In particular, $U \in L^2(0,T;H^1(\O))^N$.

Corollary \ref{coro:lavraiecompacite} provides the $L^2$ bound on $\nabla_{\Tt, \bdt} \bu_i$. 
For the other inequality, we let $\bvarphi \in \RR^{\Tt}$, $n\in\lent1,N_{T}\rent$ and $i\in\lent1,N\rent$. 
It follows from \eqref{eq:scheme_c} that
\[
\int_\Omega \pi_{\Tt}\left(\bu^n_{i}-\bu^{n-1}_{i}\right)\pi_{\Tt}\bvarphi = -\Delta t_n\sum_{K\in\Tt}\varphi_K \sum_{\sig\in\Ee_K}F_{i,K\sigma}^n.
\]
Using \eqref{eq:flux}, this yields
\begin{multline*}
\frac1{\Delta t_n}\int_\Omega \pi_{\Tt}\left(\bu^n_{i}-\bu^{n-1}_{i}\right)\pi_{\Tt}\bvarphi 
=\sum_{\sigma\in\Ee}a^\star\tau_\sig D_{K\sig}\bu_i^nD_{K\sig}\bvarphi\\
+\sum_{\sigma\in\Ee}\tau_\sigma\left(\sum_{j=1}^N (a_{i,j}-a^\star)\left(u_{j,\sigma}^nD_{K\sigma} u_i^n-u_{i,\sigma}^nD_{K\sigma} u_j^n\right)\right)D_{K\sig}\bvarphi.
\end{multline*}
Using the Cauchy-Schwarz inequality, the $L^\infty$ bound on $\left(u_{i,\sig}^n\right)_{\sig \in \Ee, i \in \lent 1,N \rent}$ 
and \eqref{eq:scalarprod} 
then leads to
\begin{multline*}
\frac1{\Delta t_n}\int_\Omega \pi_{\Tt}\left(\bu^n_{i}-\bu^{n-1}_{i}\right)\pi_{\Tt}\bvarphi 
\leq a^\star \|\nabla_{\Tt} \bu_i^n \|_{L^2(\Omega)}\|\nabla_{\Tt} \bvarphi \|_{L^2(\Omega)}\\
+\|\nabla_{\Tt} \bvarphi \|_{L^2(\Omega)}\sum_{j=1}^N |a_{i,j}-a^\star|\left(\|\nabla_{\Tt} \bu_i^n \|_{L^2(\Omega)}+\|\nabla_{\Tt} \bu_j^n \|_{L^2(\Omega)}\right).
\end{multline*}
By definition of the discrete $(H^1(\O))'$ norm, we have 
\[
\left\|\frac{\bu_i^n-\bu_i^{n-1}}{\Delta t_n}\right\|_{-1}\leq a^\star \|\nabla_{\Tt} \bu_i^n \|_{L^2(\Omega)}
+\sum_{j=1}^N |a_{i,j}-a^\star|\left(\|\nabla_{\Tt} \bu_i^n \|_{L^2(\Omega)}+\|\nabla_{\Tt} \bu_j^n \|_{L^2(\Omega)}\right).
\]
Using Corollary \ref{coro:lavraiecompacite} again provides that  $\sum_n \|\bu_i^n-\bu_i^{n-1}\|_{-1}\leq C$. 
The relative compactness properties on $\pi_{\Tt,\bdt} \bu_i$ and $\grad_{\Tt,\bdt} \bu_i$ follow.

We still have to prove that $U$ is in $L^\infty(Q_T; \Aa)$. Let $i \in \lent 1,N\rent$ and 
let $\varphi_i \in L^2(Q_T)$ be zero where the limit 
$u_i$ is nonnegative and 1 where the limit is negative, then
\[
\int_{Q_T} \varphi_i\pi_{\Tt, \bdt} \bu_{i}  \geq 0 \qquad \text{and}\qquad 
\int_{Q_T} \varphi_i\pi_{\Tt, \bdt} \bu_{i} \underset{m\to+\infty}\longrightarrow \int_{Q_T}u_i\varphi_i \leq 0.
\]
Therefore, $\int_{Q_T}u_i\varphi_i=0$, so that $u_i$ is nonnegative. Finally, 
the linearity of the limit yields $\sum_{i=1}^N u_i=1$.
\end{proof}
\begin{remark} The uniform $L^\infty(Q_T)$ bound on $\pi_{\Tt_m, \bdt_m} \bU_m$ together with the strong convergence 
in $L^2(Q_T)$ yield \eqref{eq:conv_Lp} thanks to H\"older's inequality:
\[
\pi_{\Tt_m, \bdt_m} \bU_m  \underset{m\to\infty} \longrightarrow U \quad \text{strongly in $L^p(Q_T)^N$, for any $1\leq p <\infty$}.\]
\end{remark}

We also need convergence properties for the face values $u_{i,\sigma}$. We can reconstruct an approximate solution $u_{i,\Ee,\bdt}$ which is piecewise constant on the diamond cells by setting, for all $i \in \lent 1, N\rent$:
\[
u_{i,\Ee,\bdt}(t, x) = u_{i,\sig}^{n}  \quad\text{if}\quad (t, x) \in (t_{n-1},t_n] \times \Delta_\sig, \quad \sig \in \Ee.
\]

\begin{lemma}\label{lem:c_Ee} We have, for any $i\in \lent 1, N \rent$:
\[
u_{i,\Ee_m,\bdt_m} \underset{m\to\infty}\longrightarrow u_i \quad \text{in}\quad L^p(Q_T)\text{, for any $1\leq p <\infty$,}
\]
where $U$ is as in Proposition~\ref{prop:compact}.
\end{lemma}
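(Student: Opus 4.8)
The plan is to show that the face reconstruction $u_{i,\Ee_m,\bdt_m}$ shares the same limit as the cell reconstruction $\pi_{\Tt_m,\bdt_m}\bu_{i,m}$, which converges to $u_i$ strongly in every $L^p(Q_T)$ by Proposition~\ref{prop:compact} and the subsequent remark. Since both reconstructions take values in $[0,1]$ (recall $u_{i,\sig}^n$ is a logarithmic mean of two numbers in $[0,1]$, hence lies between them), a uniform $L^\infty$ bound is available for free, so by H\"older's inequality it suffices to prove convergence in $L^2(Q_T)$, or even just that the difference $u_{i,\Ee_m,\bdt_m}-\pi_{\Tt_m,\bdt_m}\bu_{i,m}$ tends to zero in $L^2(Q_T)$. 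Since $U_{\Tt_m,\bdt_m}\to U$ strongly in $L^2$, establishing that this difference vanishes will finish the argument.

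\medskip

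First I would estimate the difference between the face value and the neighbouring cell value on each diamond cell. The key elementary fact is that the logarithmic mean $u_{i,\sig}^n$ of $u_{i,K}^n$ and $u_{i,K\sig}^n$ lies between these two values, so pointwise on $\Delta_\sig$ one has $|u_{i,\sig}^n - u_{i,K}^n| \leq |u_{i,K\sig}^n - u_{i,K}^n| = D_\sig \bu_i^n$. Integrating the square of this difference over the diamond cells and summing in space and time, I would bound
\[
\iint_{Q_T}\bigl|u_{i,\Ee,\bdt}-\pi_{\Tt,\bdt}\bu_i\bigr|^2\,\d x\,\d t
\;\lesssim\; \sum_{n=1}^{N_T}\dt_n\sum_{\sig\in\Ee} m_{\Delta_\sig}\,(D_\sig\bu_i^n)^2,
\]
where $m_{\Delta_\sig}$ is the $d$-dimensional measure of the diamond cell. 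Using that $m_{\Delta_\sig}$ is comparable to $m_\sig d_\sig$ (up to the regularity factor $\zeta_\Tt$, which is bounded below by $\zeta^\star$), and recalling $\tau_\sig = m_\sig/d_\sig$, the right-hand side is controlled by $h_{\Tt_m}^2\sum_n\dt_n\sum_\sig \tau_\sig (D_\sig\bu_i^n)^2$, since $m_{\Delta_\sig}\lesssim h_{\Tt_m}^2 \tau_\sig$. By \eqref{eq:norm_L2H1} this last sum equals a constant times $\|\grad_{\Tt,\bdt}\bu_i\|_{L^2(Q_T)}^2$, which is uniformly bounded by Corollary~\ref{coro:lavraiecompacite}.

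\medskip

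Putting these together, the $L^2(Q_T)$ norm of $u_{i,\Ee_m,\bdt_m}-\pi_{\Tt_m,\bdt_m}\bu_{i,m}$ is bounded by $C\,h_{\Tt_m}$, which tends to zero as $m\to\infty$. Combined with the strong $L^2$ convergence of $\pi_{\Tt_m,\bdt_m}\bu_{i,m}$ to $u_i$, this gives $u_{i,\Ee_m,\bdt_m}\to u_i$ strongly in $L^2(Q_T)$, and then the uniform $L^\infty$ bound upgrades this to convergence in every $L^p(Q_T)$ with $1\leq p<\infty$ via H\"older's inequality, as in the remark after Proposition~\ref{prop:compact}.

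\medskip

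I expect the main obstacle to be the geometric comparison between $m_{\Delta_\sig}$ and $h_{\Tt_m}^2\tau_\sig$: one must check that the diamond-cell volume is controlled in terms of the transmissibility with a constant depending only on $\zeta^\star$ and the dimension $d$, so that the extra factor of $h_{\Tt_m}^2$ genuinely appears and forces the difference to vanish. This is where the mesh-regularity assumption $\zeta_{\Tt_m}\geq\zeta^\star$ is essential. Everything else is a direct consequence of the logarithmic-mean bound and the already-established uniform discrete $H^1$ estimate.
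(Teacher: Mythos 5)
Your proof is correct and follows essentially the same route as the paper: both arguments compare $u_{i,\Ee,\bdt}$ with $\pi_{\Tt,\bdt}\bu_i$ on the diamond cells using the fact that the logarithmic mean lies between its two arguments (so the discrepancy is bounded by $D_\sig\bu_i^n$, with boundary faces contributing nothing), extract a factor $h_{\Tt_m}$ from the diamond-cell geometry, and conclude via the uniform discrete $L^2(0,T;H^1)$ bound of Corollary~\ref{coro:lavraiecompacite} together with the $L^\infty$ bound and H\"older's inequality; the only (cosmetic) difference is that the paper estimates $\|u_{i,\Ee,\bdt}-u_{i,\Tt,\bdt}\|_{L^1(Q_T)}$ and invokes Cauchy--Schwarz, whereas you work directly in $L^2$ via $m_{\Delta_\sig}\leq C\,h_{\Tt}^2\tau_\sig$, both routes yielding an $O(h_{\Tt_m})$ bound. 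One small remark: the geometric comparison you flag as the main obstacle does not actually require the regularity bound $\zeta_{\Tt_m}\geq\zeta^\star$, since the orthogonality condition of Definition~\ref{def:mesh} gives exactly $m(\Delta_\sig)=\frac{1}{d}\,m_\sig d_\sig=\frac{1}{d}\,\tau_\sig d_\sig^2$ and $d_\sig\leq 2h_\Tt$, so the constant depends only on the dimension $d$.
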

\begin{proof} Here again, we get rid of $m$ for clarity, and show the convergence for a specific value of $p$. 
The convergence for any finite $p$ follows from the $L^\infty(Q_T)$ bound on $u_{i,\Ee_m,\bdt_m}$ and H\"older's inequality.
Since $u_{i,\Tt,\bdt}$ converges towards $u_i$ in $L^1(Q_T)$, and since $u_{i,\Ee,\bdt}$ is uniformly bounded, 
it suffices to show that $\|u_{i,\Ee,\bdt}-u_{i,\Tt,\bdt}\|_{L^1(Q_T)}$ tends to $0$. 
Denote by $\Delta_{K\sig}$ the half-diamond cell which is defined as the interior of the convex hull of $\{x_K,\sig\}$ for $K\in\Tt$ 
and $\sig \in \Ee_K$, then the following geometrical relation holds:
 \[m(\Delta_{K\sig}) = \frac1d m_\sig {\rm dist}(x_K,\sig) \leq \frac{h_\Tt}d m_\sig.\]
 As a consequence, 
\begin{align*}
\|u_{i,\Ee,\bdt}-u_{i,\Tt,\bdt}\|_{L^1(Q_T)} = & \sum_{n=1}^{N_T} \dt_n \sum_{K\in\Tt} \sum_{\sig \in \Ee_K} m_{\Delta_{K\sig}}|u_{i,K}^n - u_{i,\sig}^n| \\
\leq & \frac{h_\Tt}{d}  \sum_{n=1}^{N_T} \dt_n \sum_{K\in\Tt} \sum_{\sig \in \Ee_K} m_\sig|u_{i,K}^n - u_{i,\sig}^n|.
\end{align*}
As we have $u_{i,K}^n = u_{i,\sig}^n$, the contributions corresponding to the boundary edges vanish.
For $\sig \in \Ee_{\rm int}$, $u_{i,\sig}$ is an average of $u_{i,K}$ and $u_{i,K\sig}$, hence
\(|u_{i,K}^n - u_{i,\sig}^n| \leq |u_{i,K}^n - u_{i,K\sig}^n|.\)
Therefore, we obtain that 
\begin{multline*}
\|u_{i,\Ee,\bdt}-u_{i,\Tt,\bdt}\|_{L^1(Q_T)} \leq \frac{h_\Tt}d\sum_{n=1}^{N_T} \dt_n \sum_{\sig \in \Ee_{\rm int}} 2 m_\sig \left|D_\sig \bu_i^n\right|\\
\leq2\frac{h_\Tt}d\left(\sum_{n=1}^{N_T} \dt_n \sum_{\sig \in \Ee_{\rm int}} m_\sig d_\sig\right)^\frac12\left(\sum_{n=1}^{N_T} \dt_n \sum_{\sig \in \Ee_{\rm int}} \tau_\sig \left|D_\sig \bu_i^n\right|^2\right)^\frac12.
\end{multline*}
We deduce from Corollary~\ref{coro:lavraiecompacite} that 
\(
\|u_{i,\Ee,\bdt}-u_{i,\Tt,\bdt}\|_{L^1(Q_T)} \leq C h_\Tt,
\)
hence $u_{i,\Ee,\bdt}$ and $u_{i,\Tt,\bdt}$ share the same limit in $L^1(Q_T)$.
\end{proof}

\subsection{Convergence towards a weak solution}\label{ssec:identify}

The last step to conclude the proof of Theorem~\ref{thm:main2} is to identify the limit 
value $U$ exhibited in Proposition~\ref{prop:compact} as a weak solution to \eqref{eq:crossdiff}, \eqref{eq:no-flux}  
corresponding to the initial profile $U \in L^\infty(\Omega; \Aa)$. This is the purpose of our last statement. 
\begin{prop}\label{prop:conv_c}
Let $U$ be as in Proposition~\ref{prop:compact}, then $U$ is a weak solution in the sense of Definition~\ref{def:weaksol}.
\end{prop}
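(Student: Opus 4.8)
The plan is to pass to the limit in a discrete weak formulation obtained by testing the scheme against a smooth function. Fix $i\in\lent 1,N\rent$ and $\varphi\in C^\infty_c([0,T)\times\ov\O)$, and set $\varphi_K^n=\varphi(t_n,x_K)$, $\bvarphi^n=(\varphi_K^n)_{K\in\Tt}$. Multiplying the conservation law~\eqref{eq:scheme_c} by $\dt_n\varphi_K^{n-1}$, summing over $K\in\Tt$ and $n\in\lent 1,N_T\rent$, and reorganizing the flux contributions edge by edge using the conservativity $F_{i,K\sig}^n=-F_{i,L\sig}^n$ and $F_{i,K\sig}^n=0$ on $\Ee_{\rm ext}$, one obtains a discrete weak formulation of the form $A_m+B_m+C_m=0$, where $A_m$ comes from the accumulation term, $B_m$ from the $a^\star$ part of the flux~\eqref{eq:flux}, and $C_m$ from the cross-diffusion part. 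The strategy is to show that $A_m$, $B_m$ and $C_m$ converge respectively to the three terms of~\eqref{eq:wf}, the spurious $a^\star$ contributions cancelling in the limit.

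For the accumulation term, a discrete integration by parts in time (Abel summation) moves the time difference onto $\bvarphi$. Since $\varphi$ is compactly supported in $[0,T)$, for $m$ large the boundary contribution at $t=T$ vanishes, leaving $-\sum_{K\in\Tt} m_K u_{i,K}^0\varphi_K^0$ together with a term $-\iint_{Q_T}\pi_{\Tt,\bdt}\bu_i\,\Phi_m$, where $\Phi_m$ is the piecewise constant reconstruction of $(\varphi_K^n-\varphi_K^{n-1})/\dt_n$. Since $\Phi_m\to\p_t\varphi$ uniformly, $\pi_\Tt\bu_i^0\to u_i^0$ in $L^1(\O)$, and $\pi_{\Tt,\bdt}\bu_i\to u_i$ strongly in $L^2(Q_T)$ by Proposition~\ref{prop:compact}, I obtain $A_m\to-\iint_{Q_T}u_i\p_t\varphi-\int_\O u_i^0\varphi(0,\cdot)$.

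For the flux terms the key is a consistency step: using the orthogonality condition of the mesh and the smoothness of $\varphi$, one replaces the discrete products $\tau_\sig(\cdots)D_{K\sig}\bvarphi$ by integrals of $(\cdots)\grad_{\Tt,\bdt}\bu$ against the \emph{true} gradient $\grad\varphi$, the consistency error being controlled by $h_\Tt$ and the $C^2$ norm of $\varphi$ and made to vanish thanks to the uniform bound of Corollary~\ref{coro:lavraiecompacite}. Concretely $B_m=a^\star\iint_{Q_T}\grad_{\Tt,\bdt}\bu_i\cdot\grad\varphi+o(1)$ and, writing the face weights through the diamond-cell reconstructions,
\[
C_m=\iint_{Q_T}\sum_{j=1}^N(a_{i,j}-a^\star)\left(u_{j,\Ee,\bdt}\,\grad_{\Tt,\bdt}\bu_i-u_{i,\Ee,\bdt}\,\grad_{\Tt,\bdt}\bu_j\right)\cdot\grad\varphi+o(1).
\]
I then pass to the limit via the weak--strong structure: $\grad_{\Tt,\bdt}\bu_i\rightharpoonup\grad u_i$ weakly in $L^2(Q_T)^d$ (Proposition~\ref{prop:compact}), the face reconstructions $u_{j,\Ee,\bdt}\to u_j$ strongly in every $L^p(Q_T)$ (Lemma~\ref{lem:c_Ee}), and $\grad\varphi$ is a fixed bounded function, so each product converges to the product of its limits. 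This gives $B_m\to a^\star\iint\grad u_i\cdot\grad\varphi$ and $C_m\to\iint\sum_j(a_{i,j}-a^\star)(u_j\grad u_i-u_i\grad u_j)\cdot\grad\varphi$. Finally, since $U\in L^\infty(Q_T;\Aa)$ satisfies $\sum_j u_j=1$, hence $\sum_j\grad u_j=0$ and $\sum_j(u_j\grad u_i-u_i\grad u_j)=\grad u_i$, the $a^\star$ parts of $B_m$ and $C_m$ cancel and the limit of $A_m+B_m+C_m=0$ is exactly~\eqref{eq:wf}.

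The main obstacle is the passage to the limit in the nonlinear flux term $C_m$. The discrete gradient $\grad_{\Tt,\bdt}\bu_i$ converges only weakly, and the reconstructed gradient of the test function is merely weakly consistent, so the two cannot be paired directly. The resolution, which dictates the whole structure above, is to test against the true gradient $\grad\varphi$ of the smooth function (turning the pairing into a genuine weak--strong product) and to exploit the strong $L^p$ convergence of the logarithmic-mean face values provided by Lemma~\ref{lem:c_Ee}; showing that the consistency error generated by this substitution vanishes is where the mesh regularity $\zeta^\star$ and Corollary~\ref{coro:lavraiecompacite} enter.
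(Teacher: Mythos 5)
Your proposal is correct, and its skeleton coincides with the paper's proof: same discrete weak formulation obtained by testing \eqref{eq:scheme_c} against $\dt_n\varphi_K^{n-1}$, same three-term decomposition (your $A_m,B_m,C_m$ are the paper's $T_1,T_2,T_3$), same compactness inputs (weak $L^2$ convergence of $\grad_{\Tt,\bdt}\bu_i$ from Proposition~\ref{prop:compact}, strong $L^p$ convergence of the face reconstructions from Lemma~\ref{lem:c_Ee}), and the same final cancellation of the $a^\star$ contributions via $\sum_j u_j=1$ and \eqref{eq:changeofastar}. The one place where you genuinely diverge is the treatment of the test-function gradient. The paper keeps the discrete pairing exact: it invokes a second, strongly consistent reconstruction $\wh\grad_\Tt$ (the asymmetric gradient of Droniou--Eymard) satisfying the duality identity $\int_{\Delta_\sig}\grad_\Tt\bu\cdot\wh\grad_\Tt\bv\,\d x=\tau_\sig D_{K\sig}\bu\,D_{K\sig}\bv$, so that $T_2$ and $T_3$ are rewritten \emph{without error} and the limit follows from the uniform convergence $\wh\grad_\Tt\bvarphi^n\to\grad\varphi$. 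You instead substitute the true gradient $\grad\varphi$ directly and estimate the commutation error: by the orthogonality condition $x_L-x_K=d_\sig n_{K\sig}$ one has $D_{K\sig}\bvarphi = d_\sig\, n_{K\sig}\cdot\grad\varphi + O(d_\sig h_\Tt\|\varphi\|_{C^2})$ per face, and since the face weights $u_{j,\sig}^n$ lie in $[0,1]$, Cauchy--Schwarz together with $\sum_\sig m_\sig d_\sig \leq d\, m_\O$ and the gradient bound of Corollary~\ref{coro:lavraiecompacite} gives a global error $O(h_\Tt)$ --- this works, and the fact that $\grad_\Tt\bu_i$ is parallel to $n_{K\sig}$ on each diamond cell ensures that pairing with the full vector $\grad\varphi$ only picks up the normal component, as it should. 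What each route buys: the paper's version is cleaner (no consistency computation, the error is exactly zero) at the cost of importing the $\wh\grad$ machinery by citation; yours is self-contained and more elementary, at the cost of the explicit $O(h_\Tt)$ bookkeeping, which you correctly identified as the step where the a priori estimate enters (the mesh regularity $\zeta^\star$ is in fact not needed for that particular estimate, only the orthogonality of the mesh is). Either way the weak--strong limit passage in $C_m$ is justified exactly as you state, since $u_{j,\Ee,\bdt}\grad\varphi\to u_j\grad\varphi$ strongly in $L^2(Q_T)^d$ against the weakly convergent discrete gradients.
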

\begin{proof}
We drop again the subscript $m$ for the sake of readability, and let $i\in\lent 1, N\rent$, $\varphi \in C^\infty_c([0,T)\times\ov\O)$, then define $\bvarphi=(\varphi_K^n)$ by $\varphi_K^n = \varphi(x_K,t_n)$ for all $n \in \{0,\dots, N_T\}$ and $K \in \Tt$.  Multiplying~\eqref{eq:scheme_c}  by $\dt_n\varphi_K^{n-1}$, then summing over $K\in\Tt$ and $n\in\{1,\dots, N_T\}$ leads to 
\begin{equation}\label{eq:T123}
T_1 + T_2 + T_3 = 0, 
\end{equation}
where we have set 
\begin{align*}
T_1 = &\sum_{n=1}^{N_T}\sum_{K\in\Tt}m_K(u_{i,K}^n - u_{i,K}^{n-1}) \varphi_K^{n-1},\\
T_2 = &\sum_{n=1}^{N_T}\dt_n\sum_{\sig \in \Ee}\tau_\sig a^\star  D_{K\sig}\bu^n_i D_{K\sig}\bvarphi^{n-1},\\
T_3 = &\sum_{n=1}^{N_T}\dt_n\sum_{\sig \in \Ee}\tau_\sig \sum_{j=1}^N (a_{i,j}- a^\star) \left(u_{j\sig}^n D_{K\sig}\bu_i^n-u_{i\sig}^n D_{K\sig}\bu_j^n\right) D_{K\sig}\bvarphi^{n-1}.
\end{align*}
The term $T_1$ can be rewritten as 
\[
T_1 = \sum_{n=1}^{N_T}\dt_n\sum_{K\in\Tt}m_K u_{i,K}^n \frac{\varphi_{K}^{n-1}-\varphi_K^n}{\dt_n} - \sum_{K\in\Tt}m_K u_{i,K}^0 \varphi_{K}^0,
\]
so that it follows from the convergence of $\pi_{\Tt,\bdt} \bU$ towards $U$ and of $\pi_\Tt \bU^0$ towards $U^0$ together with the regularity of $\varphi$ that 
\begin{equation}\label{eq:T1}
T_1 \underset{m\to \infty} \longrightarrow -\iint_{Q_T} u_i \p_t \varphi \d x\d t - \int_\O u_i^0 \varphi(0,\cdot)\d x. 
\end{equation}

To treat the term $T_2$, we introduce a strongly consistent reconstruction of the gradient. 
Following~\cite{DE_FVCA8} (see~\cite{CVV99} for a practical example), 
one can reconstruct a second approximate gradient operator $\wh{\grad}_{\Tt}: \R^\Tt \to L^\infty(\O)^d$ such that 
\[
\int_{\Delta_\sigma}\grad_\Tt \bu \cdot \wh\grad_\Tt \bv \d x = \tau_\sig D_{K\sig} \bu D_{K\sig} \bv, \qquad \forall \bu, \bv \in \R^\Tt, \forall \sigma\in\Ee,
\]
and which is strongly consistent, i.e., 
\[
\wh \grad_\Tt \bvarphi^n \underset{h_\Tt\to0}\longrightarrow \grad \varphi(\cdot, t_n)\;\;\text{uniformly in}\; \ov\O, \quad \forall n \in \{1,\dots, N_T\},
\]
thanks to the smoothness of $\varphi$. Using this tool, the terms $T_2$ and $T_3$, are easy to treat. The first one can be rewritten as:
\[
	T_2=a^\star\iint_{Q_T} \grad_{\Tt,\bdt} \bu_i \cdot \widehat{\grad}_{\Tt,\bdt} \bvarphi \d x\d t, 
\]
so that 
\begin{equation}\label{eq:T2}
T_2 \underset{m\to \infty}\longrightarrow a^\star\iint_{Q_T} \grad u_i \cdot \grad \varphi \d x\d t. 
\end{equation}
On the other hand, the term $T_3$ rewrites
\[
T_3=\iint_{Q_T}  \sum_{j=1}^N (a_{i,j}- a^\star) \left(u_{j,\Ee,\bdt} \grad_{\Tt,\bdt} \bu_i-u_{i,\Ee,\bdt} \grad_{\Tt,\bdt} \bu_j\right) \widehat{\grad}_{\Tt,\bdt} \bvarphi,
\]
so that
\begin{equation}\label{eq:T3}
T_3 \underset{m\to \infty} \longrightarrow \iint_{Q_T}  \sum_{j=1}^N (a_{i,j}- a^\star) \left(u_j \grad u_i-u_i \grad u_j\right) \grad\varphi.
\end{equation}
Combining \eqref{eq:T123}, \eqref{eq:T1}, \eqref{eq:T2}, and \eqref{eq:T3}, we obtain that
\begin{multline*}
-\iint_{Q_T} u_i \p_t \varphi \d x\d t - \int_\O u_i^0 \varphi(0,\cdot)\d x+ a^\star \iint_{Q_T} \grad u_i \cdot \grad \varphi \d x\d t\\+ \iint_{Q_T}  \sum_{j=1}^N (a_{i,j}- a^\star) \left(u_j \grad u_i-u_i \grad u_j\right) \grad\varphi\d x\d t=0, 
\quad 
\forall \varphi \in C^\infty_c([0,T)\times\ov\O).
\end{multline*}
Using $U\in\Aa$ and the relation \eqref{eq:changeofastar}, we recover the weak formulation~\eqref{eq:wf}.
\end{proof}
\section{Numerical results}\label{sec:num}
The numerical scheme has been implemented using MATLAB. The nonlinear system corresponding to the scheme is solved thanks to a variation of the Newton method with stopping criterion $\|\bU^{n,k+1}-\bU^{n,k}\|_{\infty}\leq 10^{-12}$ {for successful convergence or $20$ iteration for failed convergence}. The solution of the Newton iteration, $\bU^{n,k+1/3}$, is then ``projected'' on $\Aa$ by setting $\bU^{n,k+2/3}=\max(\bU^{n,k+1/3},10^{-10}\tau)$, and then for all $K\in\Tt$: $U_K^{n,k+1}=U_K^{n,k+2/3}/(\sum_{i=1}^N u_{i,K}^{n,k+2/3})$. {The seed of the algorithm is the solution to $N$ uncoupled heat equations with diffusion coefficients all equal to $a^\star$}.

For the first time step, we also make use of a continuation method based on the intermediate diffusion coefficients 
$a_{i,j}^\lambda=\lambda a_{ij}+(1-\lambda) a^\star$ with $\lambda \in [0,1]$. The parameter $\lambda$ is originally set to $1$. If Newton's method does not converge, we let $\lambda=(\lambda+\lambda_{\text{prev}})/{2}$ where $\lambda_{\text{prev}}$ is originally set to $0$. If Newton's method converges, we let $\lambda_{\text{prev}}=\lambda$ and $\lambda=1$. 

\subsection{Convergence under grid refinement}\label{ssec:1Dconv}

Our first test case is devoted to the convergence analysis of the scheme in a one-dimensional setting $\Omega=(0,1)$.
Two different initial conditions are considered: $U^0_s$ is smooth and vanished point-wise at the boundary of $\Omega$, whereas $U^0_r$ is discontinuous
and vanishes on intervals of $\O$:
\begin{align*}
&u_{1,s}^0(x)=\frac14+\frac14\cos(\pi x),&&
u_{2,s}^0(x)=\frac14+\frac14\cos(\pi x),&&
u_{3,s}^0(x)=\frac12-\frac12\cos(\pi x),\\
&u_{1,r}^0=1_{[\frac38,\frac58]},&&
u_{2,r}^0=1_{(\frac18,\frac38)}+1_{(\frac58,\frac78)},&&
u_{3,r}^0=1_{[0,\frac18]}+1_{[\frac78,1]}.
\end{align*}
We also consider {three cross-diffusion coefficients matrices, a first one $A^{\rm Lap}$ corresponding to $3$ uncoupled heat equation,} a second one $A^{\rm reg}$ called regular with positive off-diagonal coefficients, and a third one $A^{\rm sing}$ called singular with a few null off-diagonal coefficients: 
\[
{A^{\rm Lap}=\left(\begin{matrix}
0&  1& 1\\
1&  0&  1\\
1& 1&  0
\end{matrix}\right),
\qquad}
A^{\rm reg}=\left(\begin{matrix}
0&  0.2& 1\\
0.2&  0&  0.1\\
1& 0.1&  0
\end{matrix}\right),
\qquad
A^{\rm sing}=\left(\begin{matrix}
0&  0& 1\\
0&  0&  0.1\\
1& 0.1&  0
\end{matrix}\right).
\]

For the convergence tests, we have let $ a^\star=0.1$ and the meshes are uniform discretisations of $[0,1]$ from $2^5$ cells to $2^{15}$ cells. {The approximate solutions are compared to a reference solution which is analytical when $A=A^\text{Lap}$ and computed on the finest grid ($2^{15}$ cells) when $A =A^{\rm reg}$ or $A =A^{\rm sing}$}.
The final time is $0.25$, and the time discretisation is fixed with a time step of {$\dt=2^{-18}$. In the case $A = A^{\rm Lap}$, we also 
report in Figure~\ref{tab:conv} results with $128$ times finer time step, i.e. $\dt=2^{-25}$}. 
One notices that our scheme is second-order accurate in space in the setting presented in this paper ($A=A^{\rm reg}$), but only first-order accurate when confronted to non-diffusive discontinuities. We call non-diffusive discontinuities a spatial discontinuity of $u_1^0$ and $u_2^0$ (recall that $a_{1,2} = 0$ in $A^{\rm sing}$) for which $u_3^0$ is equal to $0$ on both sides of the discontinuity, so that the 
contributions corresponding to $a_{1,3}$ and $a_{2,3}$ vanish at $t=0$.
The origin of this lower order {might} lie in the difficulty to compute accurately the near-zero concentrations in the neighborhood of such discontinuities. {We also notice in Figure~\ref{tab:conv} the prevalence of the error in time when comparing with respect to an analytical solution, which will motivate the development of higher-order in time methods already discussed in Remark~\ref{rmk:higher-order}.}

\begin{figure}[htb]
	\begin{tikzpicture}
	\begin{loglogaxis}[
	xlabel=number of cells,
	ylabel=error in $L^2$ norm,
	legend style={
		cells={anchor=east},
		legend pos=outer north east,
	},
	width=0.6\linewidth]
	
	\addplot[mark=square*, color=blue] table[x=N, y=D1C0S] {conv.dat};
	\addlegendentry{$A=A^{\rm reg}$ and $U^0=U_s^0$ }
	\addplot[mark=*, color=red] table[x=N, y=D1C0RR] {conv.dat};
	\addlegendentry{$A=A^{\rm reg}$ and $U^0=U^0_r$}
	\addplot[mark=triangle*,mark size=2.8pt, color= magenta] table[x=N, y=D0C0S] {conv.dat};
	\addlegendentry{$A=A^{\rm sing}$ and $U^0=U^0_s$}
	\addplot table[x=N, y=D0C0RR] {conv.dat};
	\addlegendentry{$A=A^{\rm sing}$ and $U^0=U^0_r$}
	\addplot table[x=N, y=DlapC0S] {conv.dat};
	\addlegendentry{$A=A^{\rm Lap}$ and $U^0=U^0_s$}
	\addplot table[x=N, y=DlapC0STS] {conv.dat};
	\addlegendentry{idem with $\dt_n=2^{-25}$}
	\logLogSlopeTriangle{0.1}{-0.4}{0.1}{1}{black};
	\logLogSlopeTriangle{0.1}{-0.4}{0.1}{2}{black};
	\end{loglogaxis}
\end{tikzpicture}
	\caption{Error with respect to reference solution.}
	\label{tab:conv}
\end{figure}
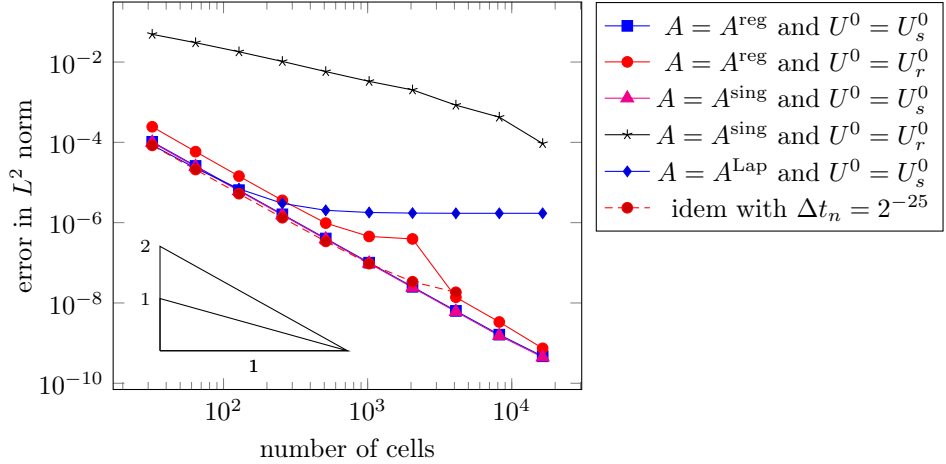

\subsection{On the influence of the parameter $a^\star$}
The choice of $a^\star$ is a natural question concerning our scheme. 
The equation \eqref{eq:choiceofa} gives a lower bound: $a^\star>0$. 
The existence of an upper bound is not as clear. Equation \eqref{eq:astarsurdiff} 
shows that for large $a^\star$, we over-estimate the diffusion. 
The optimal value of $a^\star$ depends on many variables such as the initial condition, the final time, and the mesh. 
Optimal choices of $a^\star$ are reported in Table \ref{tab:aopt}. Notice that the optimal value is test cases dependent,
since it is affected by the initial condition and by the final time. 
\begin{table}[htb]
	
\begin{tabular}{lll|c|c|c|c|}
\cline{4-7}
                                                    &                                           &           & \multicolumn{2}{c|}{$A=A^{\rm reg}$} & \multicolumn{2}{c|}{$A=A^{\rm sing}$} \\ \cline{4-7} 
                                                    &                                           &           & $U^0=U^0_s$       & $U^0=U^0_r$      & $U^0=U^0_s$       & $U^0=U^0_r$       \\ \hline
\multicolumn{1}{|l|}{\multirow{4}{*}{nb. of cells}} & \multicolumn{1}{l|}{\multirow{2}{*}{32}}  & $T=0.125$ & 0.86              & 0.21             & 0.79              & 0.0023            \\ \cline{3-7} 
\multicolumn{1}{|l|}{}                              & \multicolumn{1}{l|}{}                     & $T=0.25$  & 0.67              & 0.13             & 0.49              & 0.00082           \\ \cline{2-7} 
\multicolumn{1}{|l|}{}                              & \multicolumn{1}{l|}{\multirow{2}{*}{128}} & $T=0.125$ & 0.86              & 0.17             & 0.79              & 0.00050           \\ \cline{3-7} 
\multicolumn{1}{|l|}{}                              & \multicolumn{1}{l|}{}                     & $T=0.25$  & 0.67              & 0.11             & 0.49              & 0.00049           \\ \hline
\end{tabular}

\caption{Values of $a^\star_\text{opt}$ for different parameters. $a^\star_\text{opt}$ is computed with respect to the reference solution of Section \ref{ssec:1Dconv} for the $L^2$ norm.}
\label{tab:aopt}
\end{table}

One notices on Fig.~\ref{fig:astar} that the dependency of the quality of the results is {strong} for the initial data $U^0_r$.
This is due to the presence of vanishing concentrations in some cells, so that the choice $a^\star=0$ would allow 
for spurious solutions as highlighted in Remark~\ref{rmk:astar}. 
In this situation, the choice of $a^\star$ strongly affects the quality of the results, especially for the first time steps 
where some concentrations are still close to $0$.
The numerical experiment and homogeneity considerations suggest the following suboptimal rule for choosing $a^\star$:
\be\label{eq:rule.astar}
a^\star = \min\left\{ \max_{i\neq j} a_{i,j} \; ; \; \max\left\{ \min_{i\neq j} a_{i,j},\;  \epsilon \frac{h_\Tt^2}\tau \right\} \right\}, 
\ee
where $h_\Tt$ is the mesh size, $\tau$ the current time step and $\epsilon$ a small parameter to be tuned by the user. 
{Another interesting feature of Figure~\ref{fig:astar} is the behavior of the curve corresponding to $A=A^{\rm Lap}$. 
The classical TPFA scheme for the heat equation corresponding to $a^\star =1$ is outperformed in terms of 
accuracy by the scheme corresponding to higher value of $a^\star~\simeq 9.$ The introduction of enhanced diffusion 
by picking high values of $a^\star$ is not covered by formula~\eqref{eq:rule.astar}.
}

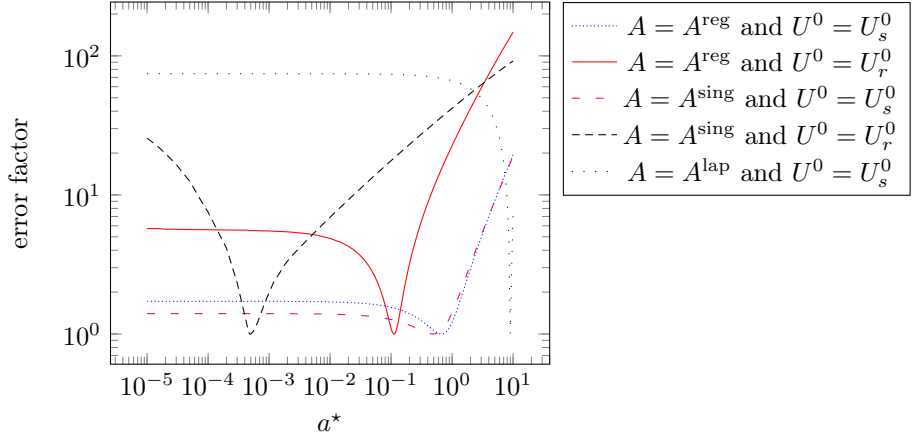
\begin{figure}[htb]
	\begin{tikzpicture}
	\begin{loglogaxis}[
	xlabel=$a^\star$,
	ylabel=error factor,
	legend style={
		legend pos=outer north east,
	},
	width=0.57\linewidth]
	
	\addplot[color=blue,densely dotted] table[x=A, y=D1C0S] {Errfactor.txt};
	\addlegendentry{$A=A^{\rm reg}$ and $U^0=U_s^0$ }
	\addplot[ color=red] table[x=A, y=D1C0RR] {Errfactor.txt};
	\addlegendentry{$A=A^{\rm reg}$ and $U^0=U^0_r$}
	\addplot[color= magenta,loosely dashed] table[x=A, y=D0C0S] {Errfactor.txt};
	\addlegendentry{$A=A^{\rm sing}$ and $U^0=U^0_s$}
	\addplot[color=black, densely dashed] table[x=A, y=D0C0RR] {Errfactor.txt};
	\addlegendentry{$A=A^{\rm sing}$ and $U^0=U^0_r$}
	\addplot[color=black, loosely dotted] table[x=A, y=DlapC0S] {Errfactor.txt};
	\addlegendentry{$A=A^{\rm lap}$ and $U^0=U^0_s$}
	
	\end{loglogaxis}
\end{tikzpicture}
	\caption{Evolution of the ratio $\frac{\|U_{a^\star}-U_{\rm ref}\|_2}{\|U_{a^\star_{\rm opt}}-U_{\rm ref}\|_2}$, where $U_{a^\star}$ is computed with $2^7$ cells and $U_{\rm ref}$ is as in Section \ref{ssec:1Dconv}.}
	\label{fig:astar}
\end{figure}

\subsection{A 2D test case {with reaction}}
Our second test is two-dimensional. We choose $A^{\rm sing}$ as the diffusion matrix and $ a^\star=0.1$.
The domain $\Omega=(0,22)\times(0,16)$ is discretized into a cartesian grid made of $110\times 80$ cells {which is presented along with initial condition in Figure \ref{fig:init}}. 
We use a uniform time stepping with $\tau=2^{-3}$. {To illustrate Remark~\ref{rmk:reac}, we introduce the following reaction:
	\[
	e_1+e_3\mathop{\rightleftharpoons}^{1000}_1 2e_2,
	\]
	which translates as follows in the {source term $R = \left(r_1,r_2,r_3\right)^T$:}
\[
	r_1(U)={(u_2^+)^2-1000u_1^+u_3^+}\qquad r_2(U)=-2r_1(U)\qquad r_3(U)=r_1(U).
\]
The reaction term $R$ obviously satisfies Assumptions \eqref{it:reac.iso} and \eqref{it:reac.pos} of Remark~\ref{rmk:reac}. 
{Let us now discuss Assumption~\eqref{it:reac.ent}.} We have for all $U,\ov{U}\in \Aa$:
\[
R(U).\ln(U/\ov{U})=r_1(U)\Bigl(\left[\ln(u_1u_3)-\ln(u_2^2)\right]-\left[\ln(\ov{u_1}\ov{u_3})-\ln(\ov{u_2}^2)\right]\Bigr).
\]
The above expression is then nonpositive for any $\ov{U}$ such that $\ln(\ov{u_1}\ov{u_3})-\ln(\ov{u_2}^2)=-\ln(1000)$}, 
{or equivalently $R(\ov U)  = 0$. A particular choice for such a $\ov U$ is the steady state $U^\infty$, which is constant 
w.r.t. to space and determined as follows. Denote by $\alpha$ the average advancement of the reaction, then}
{
\[
	u_1^\infty(x)=\frac{9}{44}-\alpha,\quad u_2^\infty(x)=\frac{2}{11}+2\alpha,\quad u_3^\infty(x)=\frac{27}{44}-\alpha.
\]
{F}or $\alpha=0$, the fraction of each specie is the ratio of corresponding occupied area in Figure~\ref{fig:init}{, i.e.
$\frac1{m_\O}\int_\O U^0=(\frac9{44},\frac2{11},\frac{27}{44})^T$}. 
{The value of $\alpha$} is determined by imposing that $R(U^\infty)= 0$, 
which amounts to find a root of a polynomial of degree two. 
{Among the two roots, only the choice $\alpha=\frac{-5\sqrt{206530}+4504}{10956}$
 yields a non-negative $U^\infty$.
 }
The time evolution of the relative energy $E_\Tt({\bU}\vert{\bU}^\infty)$ is plotted 
on Figure~\ref{fig:entropy}, showing exponential decay to the steady-state even though the diffusion matrix is singular.} Snapshots showing the evolution of the concentration profiles are presented in Figure~\ref{fig:snap}.

{To compute the solutions to our numerical scheme, we have to adapt the continuation procedure {sketched at the beginning of Section~\ref{sec:num}} to include source terms. {Roughly speaking}, we solve {the discrete counterpart to}
\[
	\partial_t u_i -a^\star\lap u_i -\lambda\dive\left(\sum_{j=1}^{N}(a_{i,j}-a^\star)\bigl(u_j\grad u_i- u_i\grad u_j\bigr)\right)=\mu r_i(U),
\]
{where the source terms are discretized in a fully implicit way.}
Due to the stiffness of the reaction terms, we have to treat the reaction first then the cross-diffusion effects.}
{More precisely, given $\bU^{n-1} \in \Aa^\Tt$, we initialise the iterative method for the computation of $\bU^n$ with 
$\bU^{n,0}$ defined as the unique solution to the $N$ uncoupled heat equations corresponding to $\lambda = \mu = 0$.
Then one tries to solve the system corresponding to $\lambda = \mu = 1$. If the modified Newton's method with truncation 
and rescaling sketched at the beginning of Section~\ref{sec:num} fails to converge, then one sets $\lambda = 0$ and $\mu = \frac12$. 
Then one use a similar continuation method to the one described at the beginning of Section~\ref{sec:num} to increase $\mu$ until 
it reaches the value $1$. Then the continuation method is used again to increase the value of $\lambda$ until $\lambda = 1$ is reached.}

\begin{figure}[htb]
		\centering
		\begin{tikzpicture} 
		\node at (0,0) {\includegraphics[width=6cm]{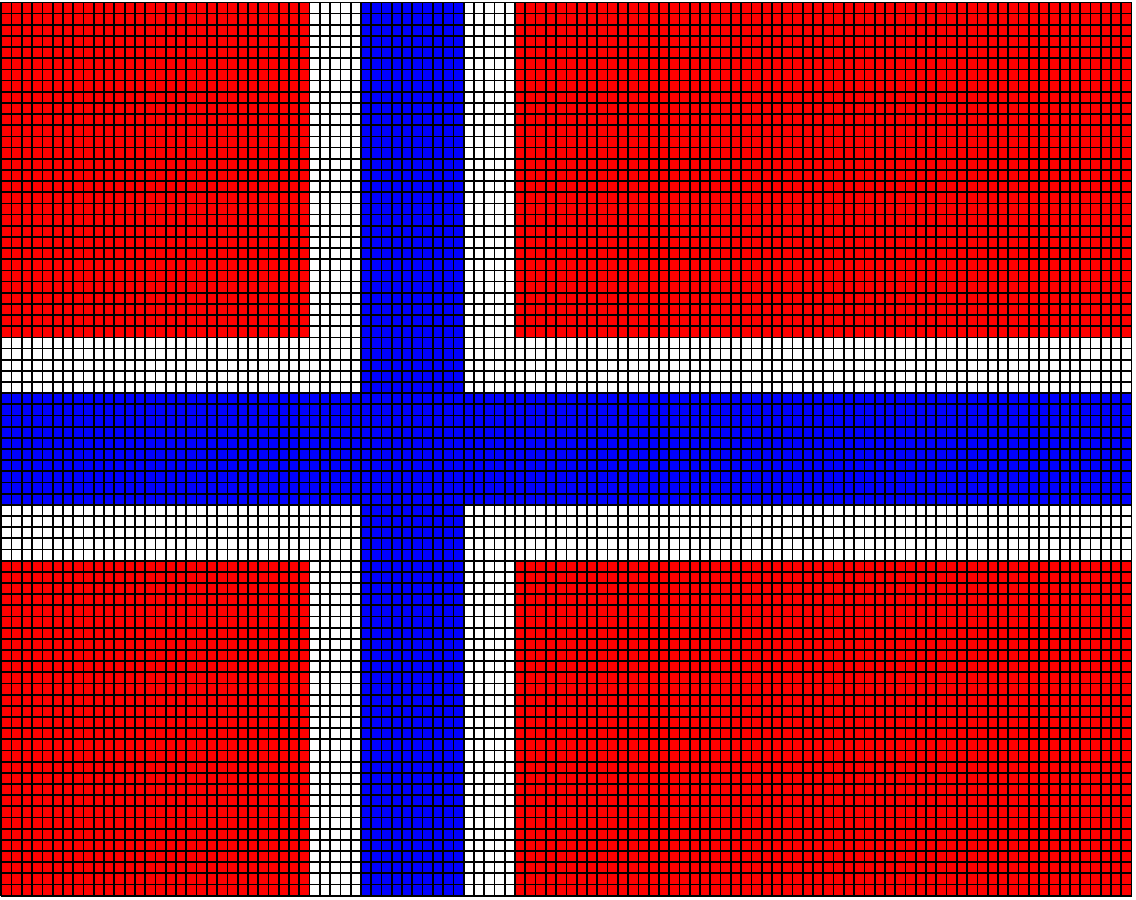}}; 
		\draw[black, fill=blue] (4.1,1)--+(.3,0)--+(.3,.2)--+(0,.2)--cycle;
		\node at (5,1.1) {$u_{1}^0=1$}; 
		\draw[black] (4.1,.5)--+(.3,0)--+(.3,.2)--+(0,.2)--cycle;
		\node at (5.,0.6) {$u_{2}^0=1$}; 
		\draw[black, fill=red] (4.1,0)--+(.3,0)--+(.3,.2)--+(0,.2)--cycle;
		\node at (5,.1) {$u_{3}^0=1$}; 
		\end{tikzpicture}
		\captionof{figure}{Initial configuration $U^0$ for the concentrations}  \label{fig:init}
		
		\centering
		\begin{tikzpicture}
		\begin{semilogyaxis}[xlabel=time,ylabel=relative entropy, width=0.5\linewidth]
		\addplot[mark=none] table[x=T, y=Erel] {entshort.dat};
		\end{semilogyaxis}
		\end{tikzpicture}
		\captionof{figure}{{$\left\vert E_\Tt({\bU}\vert{\bU}^\infty)\right\vert$} as a function of time.}
		\label{fig:entropy}
\end{figure}

\begin{figure}[htb]
	\centering
	\begin{subfigure}[t]{0.47\linewidth}
		\includegraphics[width=0.99\linewidth]{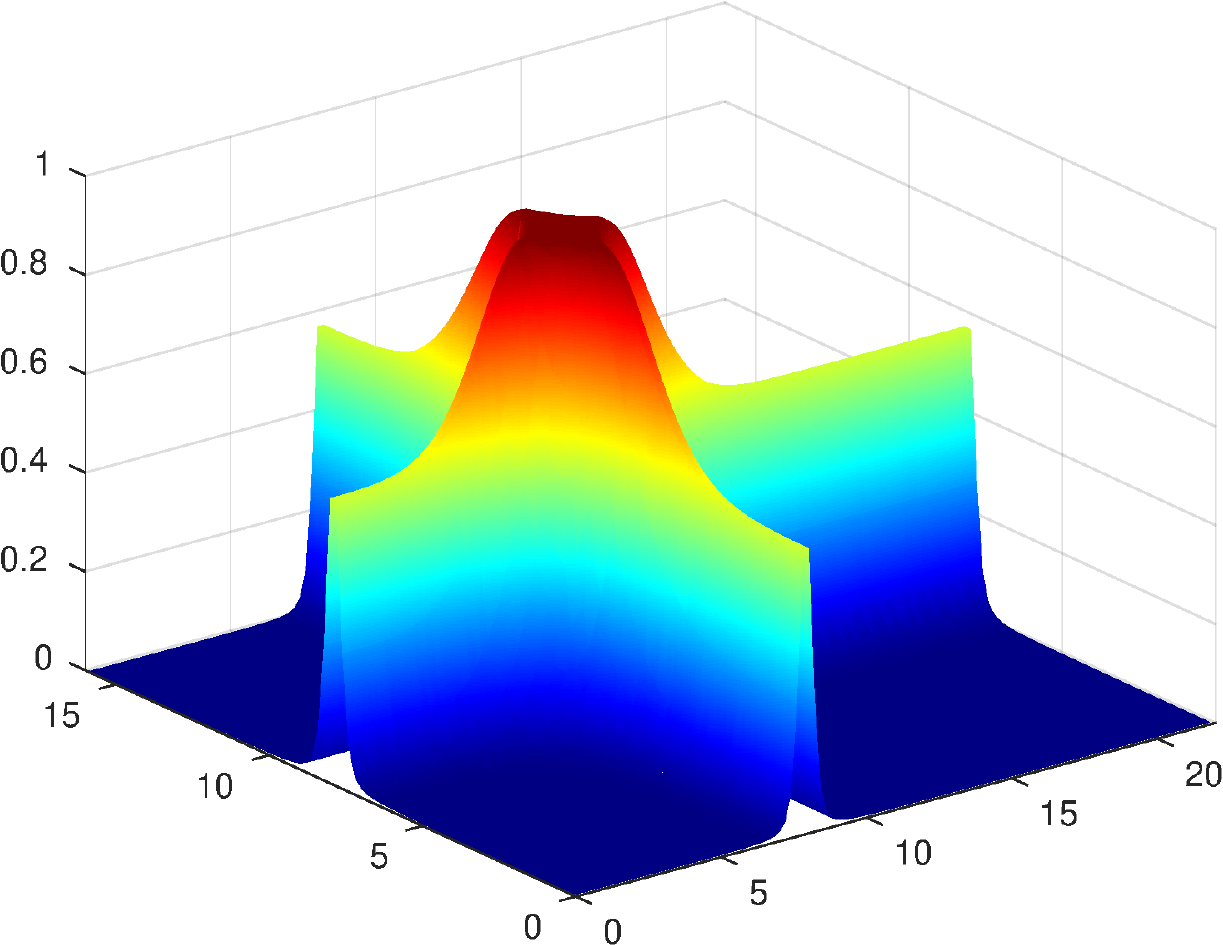}
		\caption*{Profile of $u_1$ at time {$t=20$}}
	\end{subfigure}
	\begin{subfigure}[t]{0.47\linewidth}
		\includegraphics[width=0.99\linewidth]{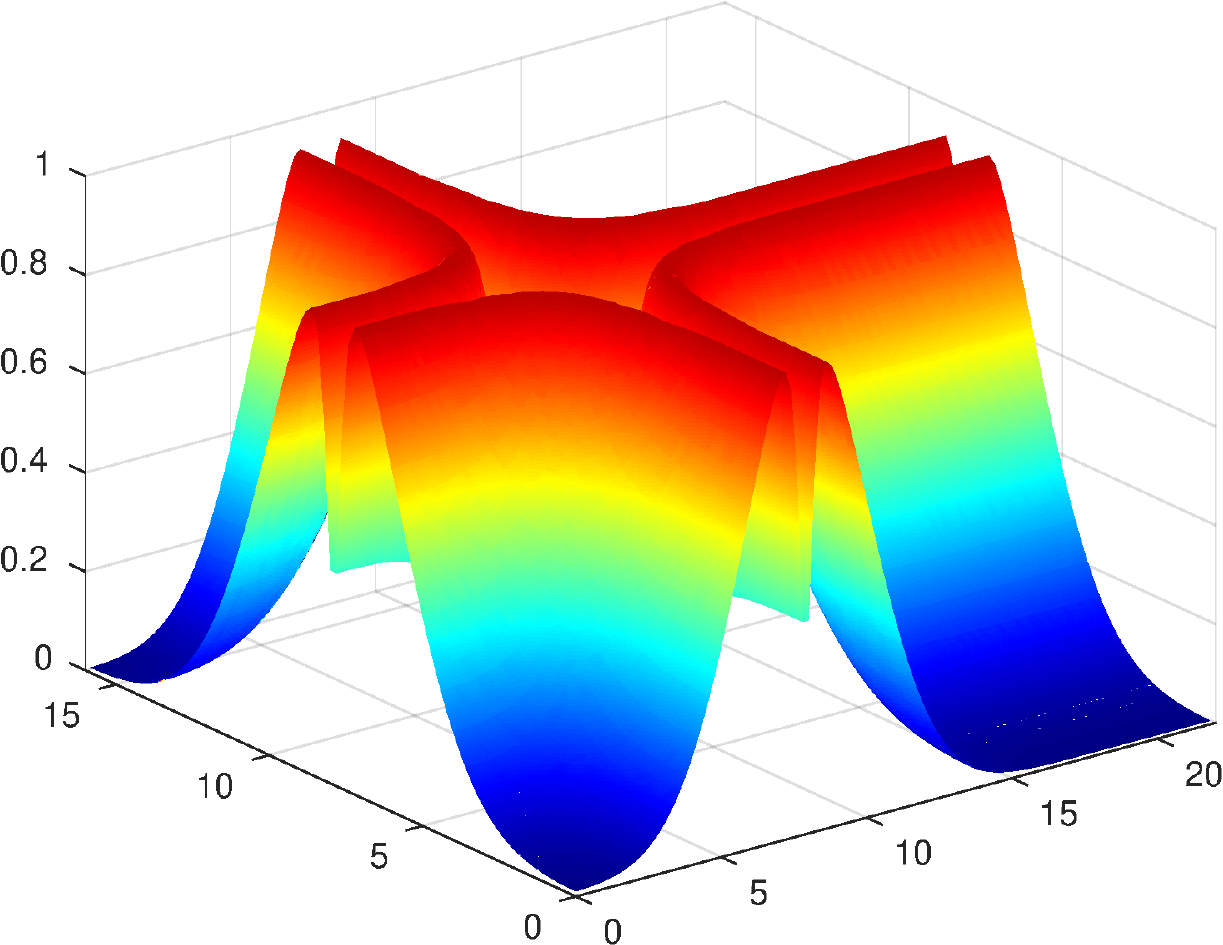}
		\caption*{Profile of $u_2$ at time {$t=20$}}
	\end{subfigure}
	\begin{subfigure}[t]{0.47\linewidth}
		\includegraphics[width=0.99\linewidth]{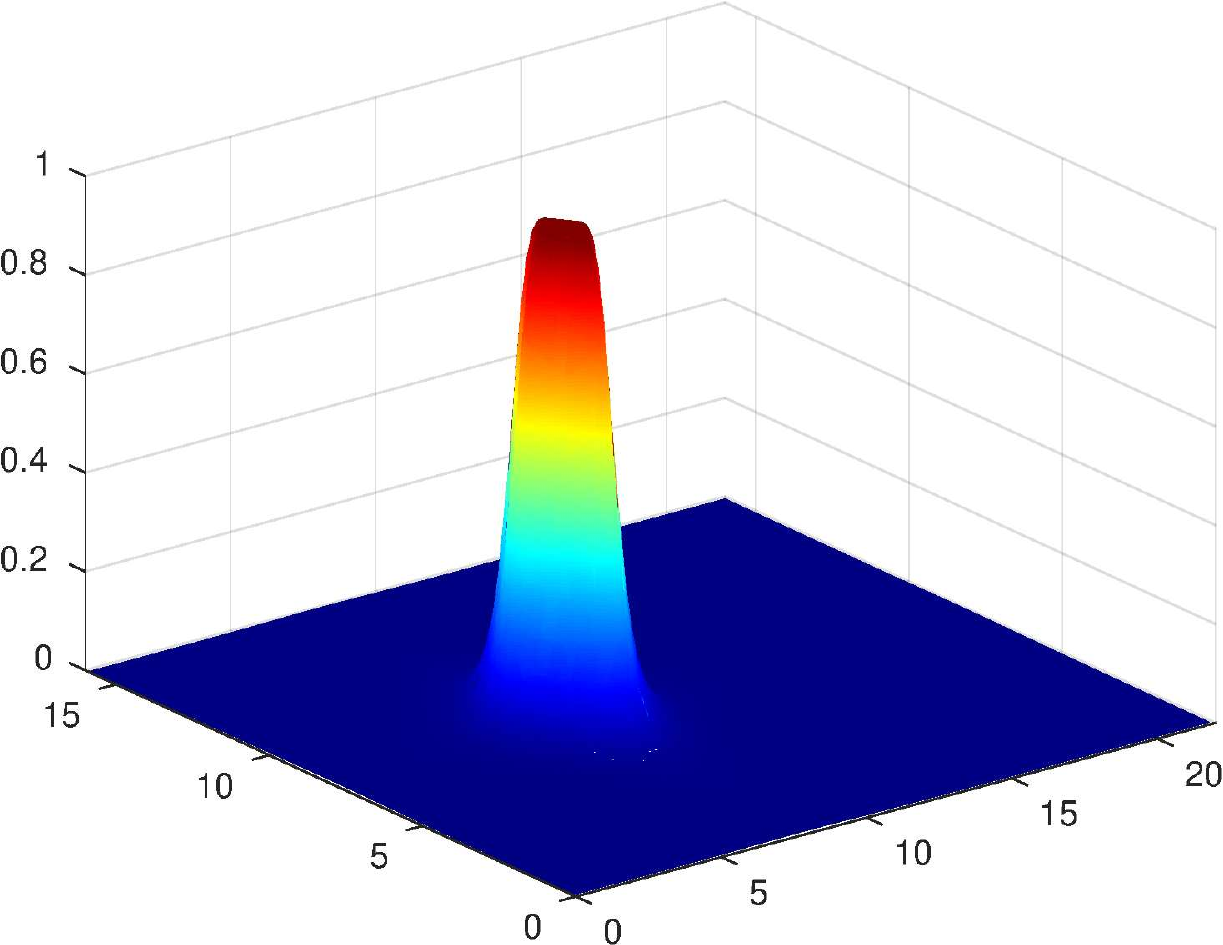}
		\caption*{Profile of $u_1$ at time {$t=50$}}
	\end{subfigure}
	\begin{subfigure}[t]{0.47\linewidth}
		\includegraphics[width=0.99\linewidth]{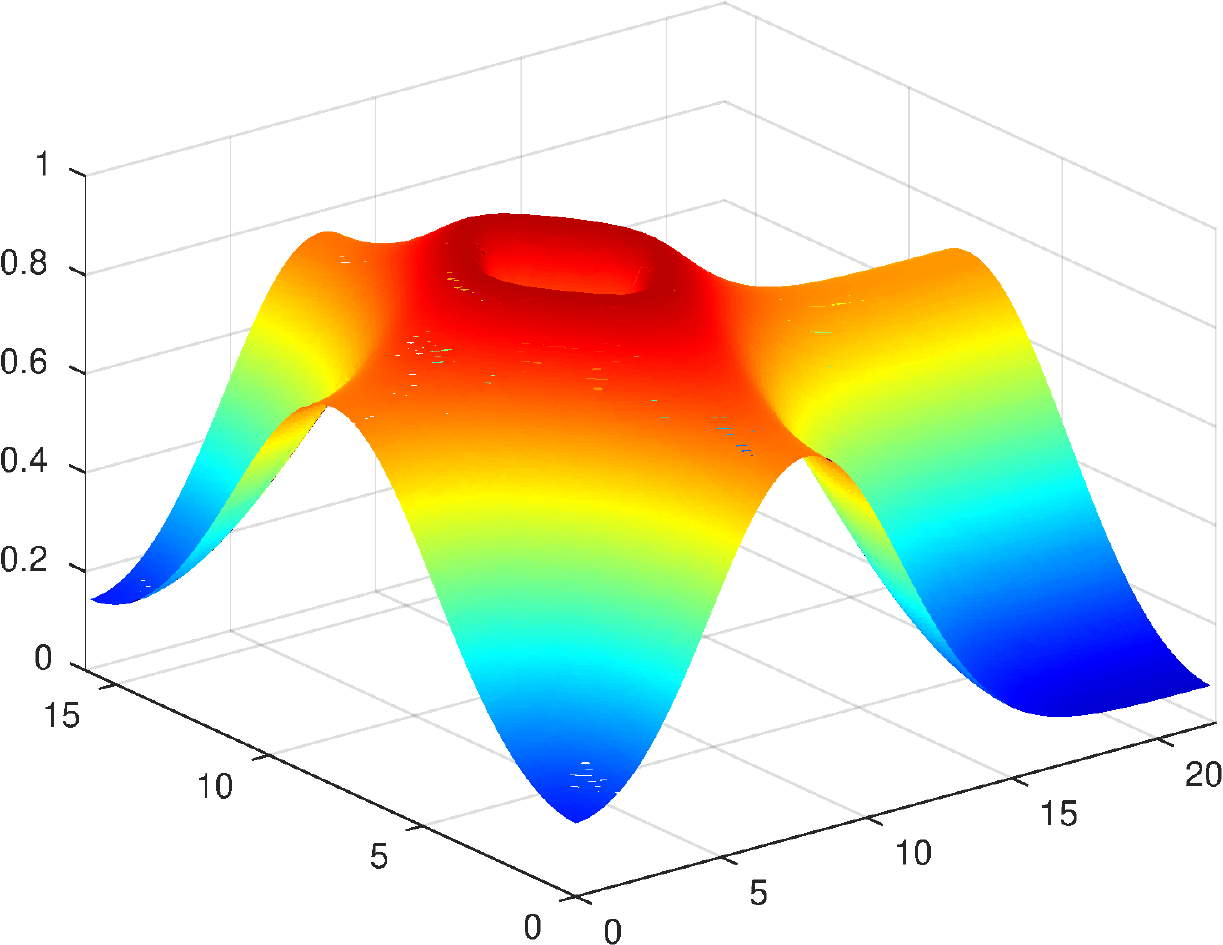}
		\caption*{Profile of $u_2$ at time {$t=50$}}
	\end{subfigure}
	\caption{Concentration configurations for various times. 
	The concentration of the third specie can be deduced thanks to $u_1+u_2+u_3=1$}
	\label{fig:snap}
\end{figure}

\section{Conclusion}
We proposed a finite volume scheme based on two-point flux approximation for 
a degenerate cross-diffusion system. The scheme was designed to preserve
the key properties of the continuous system, namely the positivity of the solutions, 
the constraint on the composition and the decay of the entropy. 
The scheme requires the introduction of a positive parameter $a^\star$ to avoid 
unphysical solutions. This parameter plays an important role in the convergence 
proof, which is carried out under a non-degeneracy assumption. Its importance 
is also confirmed in the numerical experiments, in particular in the presence of 
initial profiles with concentrations vanishing in some parts of the computational domain. 

\section*{Acknowledgements} 
The authors acknowledge support from the Labex CEMPI (ANR-11-LABX-0007-01). 
Cl\'ement Canc\`es also acknowledges support from the COMODO project (ANR-19-CE46-0002), 
and he warmly thanks Virginie Ehrlacher and Laurent Monasse for stimulating discussions that 
were at the origin of this work.

\end{document}